\newtheorem{theorem}{Theorem}
\newtheorem{assumption}{Assumption}
\newtheorem{cor}[theorem]{Corollary}
\newtheorem{definition}{Definition}
\newtheorem{remark}{Remark}
\newtheorem{lemma}{Lemma}
\newtheorem{proposition}{Proposition}
\numberwithin{equation}{section}
\renewenvironment{proof}{\smallskip\noindent\emph{\textbf{Proof.}}%
  \hspace{1pt}}{\hspace{-5pt}{\nobreak\quad\nobreak\hfill\nobreak%
    $\square$\vspace{2pt}\par}\smallskip\goodbreak}
\newenvironment{proofof}[1]{\smallskip\noindent{\textbf{Proof~of~#1.}}%
  \hspace{1pt}}{\hspace{-5pt}{\nobreak\quad\nobreak\hfill\nobreak%
    $\square$\vspace{2pt}\par}\smallskip\goodbreak}
\newcommand{\C}[1]{\mathbf{C^{#1}}}
\newcommand{\Cc}[1]{\mathbf{C}_c^{#1}}
\newcommand{\BV}{\mathbf{BV}}
\renewcommand{\L}[1]{\mathbf{L^#1}}
\newcommand{\Lloc}[1]{\mathbf{L^{#1}_{loc}}}
\newcommand{\modulo}[1]{{\left|#1\right|}}
\newcommand{\norma}[1]{{\left\|#1\right\|}}
\newcommand{\R}{\mathbb{R}}
\newcommand{\N}{\mathbb{N}}
\newcommand{\Z}{\mathbb{Z}}
\renewcommand{\epsilon}{\varepsilon}
\renewcommand{\phi}{\varphi}
\renewcommand{\theta}{\vartheta}
\newcommand{\tv}{\mathinner{\rm TV}}
\renewcommand{\d}[1]{\mathinner{\mathrm{d}{#1}}}
\newcommand{\brho}{\boldsymbol{\rho}}
\newcommand{\del}{\partial}
\newcommand{\be}{\begin{equation}}
\newcommand{\ee}{\end{equation}}
\definecolor{ffqqqq}{rgb}{1.,0.,0.}
\definecolor{uuuuuu}{rgb}{0.26666666666666666,0.26666666666666666,0.26666666666666666}
\DeclareMathOperator{\sgn}{sgn}
\let\@fnsymbol\@arabic
\newcommand*\xfootnote[1][*]{%
	\xdef\@thefnmark{#1}%
	\@footnotemark\@footnotetext}
\title{ 
A multi-class non-local macroscopic model with time delay for mixed autonomous / human-driven traffic}
\author{
\textsc{Ilaria Ciaramaglia\footnotemark[0]\;\;\footnotemark[1]}
\and
\textsc{Paola Goatin\footnotemark[1]}
\and
\textsc{Gabriella Puppo\footnotemark[2]}
}
\date{\today}
\begin{document}
\maketitle

\footnotetext[1]{ Universit\'e C\^ote d'Azur, Inria, CNRS, LJAD, 2004 route des
 Lucioles - BP 93, 06902 Sophia Antipolis Cedex, France. E-mail:
 \texttt{\{ilaria.ciaramaglia, paola.goatin\}@inria.fr}}
 
 \footnotetext[2]{Dipartimento di Matematica - Sapienza, Università di Roma; P.le Aldo Moro, 5 - 00185
Roma, Italy. E-mail: \texttt{gabriella.puppo@uniroma1.it}}

\footnotetext{Corresponding author: Ilaria Ciaramaglia.}

\begin{abstract}
%\sout{With the probably imminent arrival of autonomous vehicles, understanding their impact on traffic systems still largely composed of human-driven vehicles becomes crucial.} 
In this paper, we present a class of systems of non-local conservation laws in one space-dimension incorporating time delay, which can be used to investigate the interaction between autonomous and human-driven vehicles, each characterized by a different reaction time and interaction range. 
We construct approximate solutions using a Hilliges-Weidlich scheme and we provide uniform $\L\infty$ and $\BV$ estimates which ensure the convergence of the scheme, thus obtaining existence of entropy weak solutions of bounded variation. Uniqueness follows from an $\L 1$ stability result derived from the entropy condition. 
Additionally, we provide numerical simulations to illustrate applications to mixed autonomous / human-driven traffic flow modeling. %\sout{Rather than introducing chaos,} 
In particular, we show that the presence of autonomous vehicles improves overall traffic flow and stability.

\medskip

  \noindent\textit{2020~Mathematics Subject Classification:} 35L65, 35L03,
  65M12, 76A30.

  \medskip

  \noindent\textit{Keywords:} Non-local conservation laws; Time delay; Finite volume schemes; Autonomous vehicles.
  
\end{abstract}

\section{Introduction}
During the last century, the automobile industry has accelerated its efforts in developing smart vehicles. Also known as self-driving cars or driverless cars, autonomous vehicles (AVs) are computer-controlled cars capable of making quick decisions according to their surroundings and guiding themselves without human interaction \cite{Hussain2019}. 

The origins of autonomous driving technique can be traced back to the 1920s with the \textit{phantom autos} that were cars controlled through a remote control device \cite{firsAVweb}. 
Nowadays, recent advancements in communication and self-driving technology have made connected autonomous vehicles (CAVs) a crucial element of urban transportation systems.
Compared to traditional human-driven cars, CAVs exchange data with other vehicles and infrastructure in real-time and with an almost instantaneous reaction \cite{PAN2024}. 
%This communication can be categorized into several modes, such as vehicle-to-vehicle communication (V2V), vehicle-to-infrastructure communication (V2I), and vehicle-to-cloud communication (V2C). Additionally, vehicle-to-network communication (V2N) \cite{V2N} and vehicle-to-pedestrian communication (V2P)  \cite{V2P} have been suggested. Collectively, these communication modes are termed vehicle-to-everything (V2X).

One of the potential benefits of intelligent transportation systems (ITS) $-$ frameworks integrating advanced communication, control, and information technologies into vehicles and infrastructure $-$ concerns road safety.
AVs have the potential to dramatically reduce accidents caused by human error, which is believed to be the leading cause behind $94\%$ of all crashes from $2005$ to $2007$ \cite{Singh2018}.
Self-driven vehicles would not be subject to human failings involving inexperience, slow reaction times, over-compensation and inattention. Data collected for the \textit{Global status report on road safety 2023 survey} \cite{WHOsafety2023} indicate that around $10\%$ of road-crash deaths are related to ``driving under the influence'' of alcohol or drugs, and such data do not reflect road fatalities due to speeding, distraction (e.g. use of mobile phone) or other prohibited driver maneuvers. According to \cite{FAGNANT2015}, an ITS has the potential for at least a $40\%$ fatal crash-rate reduction.

In addition to making automobiles safer, AV technology has the potential to improve traffic flow and reduce congestion and fuel consumption. This is due to the fact that AVs could enable quicker reaction times and closer spacing between vehicles to counteract increasing demand, thus leading to smoother traffic patterns \cite{multiscale2021,GHIASI2017266,GongPiccoliVisconti2021,HertyPuppoVisconti2023}.
Some experiments indicate that controlling a group of AVs into traditional human-driven traffic can significantly stabilize the flow. In particular, if AVs are evenly distributed in space, a penetration rate of around $5\%$ is sufficient to dampen stop-and-go waves \cite{AVsexperiments2018}. However, if the AVs are not evenly spaced on the roadway, then a higher penetration rate may be required to achieve the same wave-dampening effects.
In these studies, the typical setup involves human-driven cars along with a few AVs connected to an external control structure designed to improve stability. In contrast, the framework discussed in this paper does not rely on external controls.
In fact, the observed stabilizing effect is due merely to the characteristic dynamics of the AVs, based on a large look-ahead distance and a negligible time delay, which contribute to stabilizing the entire traffic mixture.

\subsection{Modeling}

Mathematical modeling plays a key role in understanding and improving ITS.
For our purposes, we characterize autonomous and human-driven vehicles (HVs) by their reaction time and interaction horizon. Indeed, we can assume that smart cars can collect information on the surrounding traffic within a large perimeter and are able to instantaneously react to external events, whereas human drivers react to downstream traffic in a shorter range and with a non-zero time delay. 
Following this idea, we propose the following macroscopic  model for mixed traffic composed of vehicles with different reaction times and interaction range. 

We consider $M$ classes of vehicles and we fix $M$ non-negative constants $\tau_1,\dots,\tau_M$, each representing the reaction time of the $i$-th population. The model consists in the $M\times M$ system of conservation laws
\begin{equation}\label{multiclasse}
\partial_t \rho_i(t,x)+\partial_x\left(\rho_i(t,x)f_i(\rho_i(t,x))v_i((r\ast \omega_i)(t-\tau_i,x))\right)=0,\qquad\forall i=1,\dots,M,
\end{equation}
where $\rho_i:\R^+\times\R\rightarrow[0,R_i]$ is the vehicle density  associated to the $i$-th class, $v_i:[0,+\infty)\rightarrow [0,V_i]$ is its mean traffic speed, $\omega_i:[0,L_i]\rightarrow\R^+$ its convolution kernel and $f_i:[0,R_i]\rightarrow [0,1]$ its saturation function. The positive constants $R_i,V_i,L_i$ are respectively the maximal density, the maximal speed and the look-ahead distance of drivers in the $i$-th population.

The saturation function $f_i$ indicates the free space on the road and it is necessary to guarantee the maximum principle, i.e. the fulfillment of the maximal density bound. We underline that the equations in the system~\eqref{multiclasse} are not coupled in the saturation term, but only in the non-local velocities, which depend on the total density $r=\rho_1+\dots+\rho_M$. 
This is essential to derive the well-posedness results contained in this paper. Nevertheless, 
from the modeling point of view,
it would make sense to also have saturation functions depending on the total density. This choice is investigated numerically in Section~\ref{saturationtotal}. 

In the sequel, we use the compact notation $\norma{\cdot}$ for $\norma{\cdot}_{\infty}$, and we introduce the vector $\boldsymbol\tau=\left(\tau_1,\dots,\tau_M\right)$ of the time delay parameters.
Due to the delayed time dependence, the system \eqref{multiclasse} needs to be coupled with an initial condition defined on the interval $[-\norma{\boldsymbol\tau},0]$, which reduces to a point if $\boldsymbol\tau=\left(0,\dots,0\right)$. Anyway, in certain modelling applications, it might only be possible to gather the traffic data at a certain initial time $t=0$. Thus, we couple \eqref{multiclasse} with an initial condition which is obtained as a constant backward extension of the data $\rho_i(0,x)=\rho_i^0(x)$, meaning that we assume 
\begin{equation}\label{eq:initial_datum}
\rho_i(t,x)=\rho_i^0(x),\qquad \forall (x, t)\in\left[-\norma{\boldsymbol\tau},0\right]\times\R,
\end{equation}
for all $i=1,\dots,M$.
For this particular choice of past-time data, the problem can be read as a classical Cauchy problem and  we will denote by $\boldsymbol\rho^0=\left(\rho^0_1,\dots,\rho^0_M\right)$ the vector containing the initial conditions. Due to the possible presence of jump discontinuities, solutions are intended in the following weak sense.

\begin{definition}[Weak solution]
    Given $\boldsymbol\rho^0\in\L1(\R;[0,R_1]\times\dots\times[0,R_M])$, a function $\boldsymbol\rho\in\L1([0,T]\times\R;\R^M)$ is a weak solution of the Cauchy problem \eqref{multiclasse}-\eqref{eq:initial_datum} if each component $\rho_i$, $i=1,\dots,M$, satisfies 
    \begin{align*}
    \int_0^T\int_\R &\left(\rho_i\del_t\phi+\rho_i f_i(\rho_i)v_i\left((r\ast\omega_i)(t-\tau_i,x)\right)\del_x\right)\d x\d t+\int_\R \rho_i^0(x)\phi(0,x)\d x=0,
\end{align*}
for every test function $\phi\in\Cc1([0,T)\times\R;\R^+)$.
\end{definition}

Anyway, as usual for hyperbolic systems, to guarantee the model well-posedness, solutions to~\eqref{multiclasse}-\eqref{eq:initial_datum} must be intended in the entropy weak sense \cite{Kruzkov}.

\begin{definition}[Entropy weak solution]\label{multientropy}
Given $\boldsymbol\rho^0\in\L1(\R;[0,R_1]\times\dots\times[0,R_M])$, a function $\boldsymbol\rho\in\L1([0,T]\times\R;\R^M)$ is an entropy weak solution of the Cauchy problem \eqref{multiclasse}-\eqref{eq:initial_datum} if each component $\rho_i$, $i=1,\dots,M$, satisfies 
\begin{align*}
    \int_0^T\int_\R &\big(|\rho_i-\kappa|\del_t\phi+\sgn(\rho_i-\kappa)\left(\rho_i f_i(\rho_i)-\kappa f_i(\kappa)\right)v_i\left((r\ast\omega_i)(t-\tau_i,x)\right)\del_x\phi\\
    &-\sgn(\rho_i-\kappa)\kappa f_i(\kappa)\partial_xv_i(r\ast\omega_i)(t-\tau_i,x)\phi\big)\d x\d t\\
    &+\int_\R |\rho_i^0(x)-\kappa|\phi(0,x)\d x\geq 0,
\end{align*}
for every test function $\phi\in\Cc1([0,T)\times\R;\R^+)$ and $\kappa\in\R $.
\end{definition}

\subsection{Literature review}

Multi-class traffic models were first proposed in~\cite{BenzoniColombo2003, wongwong2002} to account for the diversity in vehicle types, characteristics, and behaviors. 
 They are a natural extension of the Lighthill and Whitham \cite{LighthillWhitham} and Richards \cite{Richards} (LWR) model and are given by the system of conservation laws
\begin{equation}\label{npopulationmodel_Colombo}
\partial_t\rho_i+\partial_x(\rho_iv_i(r))=0,\qquad\forall i=1,\dots,M,
\end{equation}
where the average speed $v_i:[0,R]\rightarrow[0,V_i]$ depends on the mean free space between vehicles $r=l_1\rho_1+\dots+l_M\rho_M$, where $R$ is the maximum occupied space on the road and $l_i$ is the average length of the vehicles in the $i$-th class. By means of the rescaling $\rho_i\mapsto l_i\rho_i$, without loss of generality we can assume $l_i=1$ for all $i$, hence 
$$
r=\rho_1+\dots+\rho_M.
$$
Traffic heterogeneity can be rescaled by means of passenger car equivalent (PCE) values \cite{ChitturiBenekohal2007,fastlane2008,IngleHancockAlkaisy2007,WebsterElefteroadou1999}, that is \textit{``the number of passenger cars that are displaced by a single heavy vehicle of a particular type under prevailing roadway, traffic, and control conditions''} \cite{HighCapacityManual}.
Moreover, Fan and Work \cite{FanWork2015} also allow for class-specific maximal densities $R_i$, $i=1,\ldots,M$. This allows to incorporate the creeping behaviour, when smaller vehicles can move through
 larger vehicles that are stopped in congestion.

More recently, nonlocal conservation laws have been used to model nonlocal interactions in several applications, such as crowd dynamics~\cite{ACG2015} and opinion formation \cite{Piccoli2019}. 
Regarding traffic flow modeling, non-local versions of the Lighthill-Whitham-Richards model have been proposed in~\cite{BlandinGoatin2016, ChiarelloGoatin2018, 2018Gottlich}, where the speed function depends on a weighted mean of the downstream vehicle density, or is a weighted mean of the downstream velocities. 

From a modeling perspective, the non-locality in space allows for a more accurate representation of drivers adapting their velocity to the road condition within their visibility range. One would expect that taking into account traffic conditions downstream would improve traffic behavior. Interestingly, this translates mathematically into an increased stability of the solution~\cite{HuangDu2022}.
A multi-class extension was proposed in~\cite{ChiarelloGoatin2019} as
\begin{equation}\label{multiChiarello}
\partial_t\rho_i(t,x)+\partial_x(\rho_i(t,x)v_i((r\ast\omega_i)(t,x)))=0,\qquad\forall i=1,\dots,M,
\end{equation}
using the same notations as \eqref{multiclasse}.
 The model takes into consideration traffic heterogeneity characterizing the vehicles by their speed functions and look-ahead distances. Existence of solutions of~\eqref{multiChiarello} has been proved in \cite[Theorem 1.2]{ChiarelloGoatin2019} only locally in time due to the blow-up of the $\L\infty$ bounds. We recall that global existence results for general systems of non-local conservation laws in several space dimensions were provided by~\cite{ACG2015}, requiring smooth convolution kernels on the whole real line. Therefore, these results cannot apply to the downstream interaction kernels $\omega_i$ considered in~\eqref{multiChiarello} and~\eqref{multiclasse}, which necessarily have a jump discontinuity in $0$.

To account for drivers' reaction times,~\cite{CiaramagliaGoatinPuppo2024,KeimerPflug2019} proposed scalar models which incorporate the same type of non-locality of~\eqref{multiChiarello} adding a positive reaction time $\tau$. In particular, the model proposed in~\cite{CiaramagliaGoatinPuppo2024} reads 
\begin{equation}\label{oneclass}
\partial_t \rho(t,x)+\partial_x\left(\rho(t,x)f(\rho(t,x))v((\rho\ast \omega)(t-\tau,x))\right)=0,
\end{equation}
where the saturation term $f$ guarantees that a maximum principle holds~\cite{ChiarelloGoatin2018,CiaramagliaGoatinPuppo2024}, i.e. $\rho(t,x)\in [0,R]$ for all $(t,x)\in\R^+\times\R$, ensuring global existence. Moreover, 
solutions depend continuously 
on the time delay parameter~\cite[Corollary 4.4]{CiaramagliaGoatinPuppo2024}, implying that for $\tau\searrow 0$, the solution of \eqref{oneclass} converges in the $\L1$ norm to the unique entropy solution of the non-local model with no delay~\cite{ChiarelloGoatin2018}
 $$
 \partial_t \rho(t,x)+\partial_x\left(\rho(t,x)f(\rho(t,x))v((\rho\ast \omega)(t,x))\right)=0.
 $$
 We emphasize that, typically, delay introduces instability in the model increasing solutions' total variation, whereas larger visibility distance dampens oscillations.
 
In this paper, we extend \eqref{oneclass} to the multi-class case~\eqref{multiChiarello} by incorporating delays and allowing for class-specific maximal densities~\cite{FanWork2015}. 
It is important to notice that, since we aim to study the interaction between the delayed class of HVs and the instantaneously reactive AVs, we allow for zero delays in our model. 
%As for the scalar case, we obtain well-posedness without any restriction on the time horizon.

\subsection{Main results and paper organization}

In this work, we prove global in time well-posedness results for the non-local system with delay~\eqref{multiclasse} with initial data~\eqref{eq:initial_datum} of bounded variation.
The proofs are based on uniform compactness estimates for a sequence of approximate solutions and on a Kru\v{z}kov-type doubling of variable technique.

The paper is organized as follows. In Section~\ref{sec:FVapprox} we present the finite volume scheme used to construct a sequence of approximate solutions to~\eqref{multiclasse} and in Section~\ref{multiconvergencemono} we provide the $\L\infty$ and $\BV$ estimates ensuring its convergence.
%follow \cite{CiaramagliaGoatinPuppo2024} and extend to the multi-class model the compactness estimates that have been obtained in the scalar scenario. 
Section~\ref{sec:convergence} is devoted to the main results of the paper, that are given in Theorem~\ref{multiE1} and Theorem~\ref{stabilityteo}: In Theorem~\ref{multiE1} we prove the existence of the solution together with a maximum principle, the bound on the total variation and the $\L1$ stability in time; 
Theorem~\ref{stabilityteo} is an $\L1$ stability result of the solution with respect to the initial data and the delays, from which follows uniqueness of solutions and their convergence  to the solutions of the associated non-delayed model as the delay tends to zero. This also provides a global existence result for the non-delayed model, improving the results in~\cite{ChiarelloGoatin2019}, where the absence  of saturation functions did not allow to obtain global $\L\infty$ estimates. \\
%Adapting the techniques used in \cite{ACG2015,CiaramagliaGoatinPuppo2024} allows us to improve the results obtained for the multi-class non-local model without delay \eqref{multiChiarello} in \cite{ChiarelloGoatin2019}, in which the authors achieve existence of weak solutions for small times. Their lack of $\L\infty$ bounds is overtaken by introducing saturation in the system, as we did for the scalar model \eqref{oneclass} adding a saturation term in the delayed model proposed in \cite{KeimerPflug2019}.
Finally, in Section~\ref{sec:numerics}, we collect the  numerical studies of the model properties, including  investigations on the impact of saturation terms and delays. We particularly focus on the stabilizing effect induced by the presence of AVs in a mixed autonomous/human driven environment. 
Some technical details about the $\BV$ estimates and the proof of Theorem~\ref{multiE1}  are reported in Appendix~\ref{sec: app2} and Appendix~\ref{sec:app3}, respectively.

\section{Finite volume approximation}
\label{sec:FVapprox}

Throughout the paper, we require the following hypotheses.
\begin{assumption}\label{multi hp}
    For each $i=1,\dots,M$, it holds:
    \begin{itemize}
        \item $v_i\in\C2([0,+\infty);[0,V_i])$, $v_i'\leq 0$, $v_i(0)=V_i$ and $v_i(r)=0$ for $r\geq R_i$;
        \item $f_i\in\C1([0,R_i];[0,1])$, $f_i'\leq 0$, $f_i(0)=1$ and $f_i(R_i)=0$;
        \item $\omega_i\in\C1([0,L_i],\R^+)$, $\omega_i'(x)\leq 0$ and  
        \begin{equation}\label{Ji}
        \int_0^{L_i}\omega_i(s)\d s=J_i>0.
        \end{equation}
    \end{itemize}
    We extend the kernels $\omega_i(x)=0$ for $x>L_i$, the saturation functions  $f_i(\rho)=0$ for $\rho>R_i$, and $f_i(\rho)=1$ for $\rho<0$. 
\end{assumption}

%\begin{remark}
%The requirements on the saturation and the kernel functions extend naturally the properties that we required in the scalar case \cite[Assumption 1]{CiaramagliaGoatinPuppo2024} to the multi-class scenario, now also allowing for general positive values for $J_i$. In the previous work, we normalized the kernel in order to let the interaction strength be equal to $1$, a technical assumption that enabled us to avoid extending smoothly the velocity function beyond the maximum density.
%However, in the current context, the smooth extension of $v_i$ beyond $R_i$ can not be avoided by normalizing the kernels, meaning setting $J_1=\dots=J_M=1$ for the interactions strength, since the total density is not bounded by $R_i$. 
%\end{remark}

%Under the above hypotheses, we prove the existence and stability of entropy weak solutions of \eqref{multiclasse}-\eqref{eq:initial_datum} defined as follows \cite{CiaramagliaGoatinPuppo2024}.

%Observe that the constants $0$ and $R_i$ are solutions of the $i$-th equation in \eqref{multiclasse}, for $i=1,\dots,M$. Thus, as in the scalar case, by comparison we have $0\leq\rho_i(t,x)\leq R_i$ for all $(t,x)\in\R^+\times\R$. 

We take a space step $\Delta x$ and we fix the kernel supports $L_i>0$ such that $L_i=N_i\Delta x$ for some $N_1,\dots,N_M\in\N$ holds. Moreover, we assume for simplicity that there exists a time step $\Delta t$ such that all the time delay parameters satisfy $\tau_i=h_i\Delta t$ for some $h_i\in\mathbb{N}_0:=\N\cup\{0\}$. 
We discretize \eqref{multiclasse} on a fixed grid made up of the cell centers $x_j=(j-\frac{1}{2})\Delta x$, the cell interfaces $x_{j+\frac{1}{2}}=j\Delta x$ for $j\in\Z$, and the time mesh $t^n=n\Delta t$.
We want to build a finite volume approximate solution, denoted as $\boldsymbol\rho^{\Delta x}(t,x)=(\rho^{\Delta x}_1,\dots,\rho^{\Delta x}_M)$, where $\rho^{\Delta x}_i=\rho^n_{i,j}$ for $(t,x)\in[t_n,t_{n+1})\times [x_{j-\frac{1}{2}},x_{j+\frac{1}{2}})$.
In order to do this, first we approximate each component $i$ of the initial data $\boldsymbol\rho^0$ with the piecewise constant function
$$
\rho^0_{i,j}=\frac{1}{\Delta x}\int_{x_{j-\frac{1}{2}}}^{x_{j+\frac{1}{2}}} \rho_i^0(x)\d x, \qquad j\in\mathbb{Z},
$$
 and we set $\rho^{-n}_{i,j}=\rho^0_{i,j}$ for $j\in\mathbb{Z}$,  $n=0,\dots,\max_lh_l$, consistently with \eqref{eq:initial_datum}.
Similarly, for the kernel, we define
\begin{equation}\label{wi}
\omega_i^k:=\frac{1}{\Delta x}\int_{k\Delta x}^{(k+1)\Delta x}\omega_i(x)\d x, \qquad k\in\mathbb{N}_0,
\end{equation}
so that from \eqref{Ji} we get $\Delta x\sum_{k=0}^{+\infty}\omega_i^k=\Delta x\sum_{k=0}^{N_i-1}\omega_i^k=J_i$, where the sum is finite since $\omega_i^k=0$ for $k\geq N_i$ sufficiently large.
Then,  we approximate the convolution term by a composite quadrature formula, denoting 
\begin{equation}\label{Vnj}
V^n_{i,j}:=v_i\left( \Delta x\sum_{k=0}^{+\infty}\omega_i^kr^n_{j+k}\right)=v_i\left( \Delta x\sum_{k=0}^{N_i-1}\omega_i^kr^n_{j+k}\right),
\end{equation}
where we indicate $r^n_j=\rho^n_{1,j}+\dots,\rho^n_{M,j}$.
Thus, following \cite{ChiarelloGoatin2019,2018Gottlich}, we consider the following Hilliges-Weidlich (HW) numerical flux \cite{Hilliges1995}
$$
\mathcal{F}^n_{i,j+\frac{1}{2}}=\rho^n_{i,j}f_i(\rho^n_{i,j+1})V^{n-h_i}_{i,j+1},
$$
with $\lambda=\Delta t/\Delta x$, which leads to the following numerical scheme
\begin{equation}\label{multischema}
\rho^{n+1}_{i,j}=\rho^n_{i,j}-\lambda\left(\rho^n_{i,j}f_i(\rho^n_{i,j+1})V^{n-h_i}_{i,j+1}-\rho^n_{i,j-1}f_i(\rho^n_{i,j})V^{n-h_i}_{i,j}\right).
\end{equation}
%This is the multi-class extension of the numerical scheme used in \cite{CiaramagliaGoatinPuppo2024} to approximate the scalar equation.

To prove the convergence of the numerical scheme, we start providing some important properties of~\eqref{multischema}, which support the reliability of the approximate solution as a density~\cite{ACG2015}. Particularly, we prove that each component of the solution remains always bounded between $0$ and the relative maximum capacity of the road $R_i$. 

\begin{lemma}[Positivity]\label{multiposteo} 
Under the CFL condition
\begin{equation} \label{eq:multiCFL1}
\lambda\leq\frac{1}{\max_iV_i},
\end{equation}
then the scheme \eqref{multischema} is positivity preserving on $[0,T]\times\R $ for all $T>0$.
\end{lemma}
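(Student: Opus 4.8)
The plan is to prove positivity preservation by induction on the time level $n$, showing that if all densities $\rho^n_{i,j} \in [0, R_i]$ at time level $n$ (and at all earlier levels needed for the delay terms), then $\rho^{n+1}_{i,j} \geq 0$. The structure of the scheme~\eqref{multischema} decouples the components in the saturation terms, so it suffices to work with a single arbitrary class $i$; the only coupling is through $V^{n-h_i}_{i,j}$, which depends on the total densities at an earlier time, but by the inductive hypothesis those are already controlled and in particular the arguments of $v_i$ are nonnegative, so $V^{n-h_i}_{i,j} \geq 0$ by Assumption~\ref{multi hp} (since $v_i \geq 0$).

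The key step is to rewrite the update formula~\eqref{multischema} as a convex-type combination of the values $\rho^n_{i,j-1}$ and $\rho^n_{i,j}$. Collecting terms,
\[
\rho^{n+1}_{i,j} = \rho^n_{i,j}\left(1 - \lambda f_i(\rho^n_{i,j+1})V^{n-h_i}_{i,j+1}\right) + \lambda\, \rho^n_{i,j-1} f_i(\rho^n_{i,j}) V^{n-h_i}_{i,j}.
\]
The second term is manifestly nonnegative, using $\rho^n_{i,j-1} \geq 0$, $f_i \geq 0$ (Assumption~\ref{multi hp}, extended by $f_i(\rho)=1$ for $\rho<0$ and $f_i(\rho)=0$ for $\rho>R_i$, so $f_i$ is everywhere in $[0,1]$), and $V^{n-h_i}_{i,j} \geq 0$. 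For the first term, since $\rho^n_{i,j} \geq 0$, it is enough to check that the coefficient $1 - \lambda f_i(\rho^n_{i,j+1})V^{n-h_i}_{i,j+1}$ is nonnegative. Here one bounds $f_i(\rho^n_{i,j+1}) \leq 1$ and $V^{n-h_i}_{i,j+1} = v_i(\cdots) \leq V_i \leq \max_i V_i$, so the coefficient is at least $1 - \lambda \max_i V_i \geq 0$ precisely under the CFL condition~\eqref{eq:multiCFL1}. This closes the induction.

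I expect the routine bookkeeping — tracking that the delay shift $n - h_i$ never reaches below the backward-extended initial data, where positivity holds by construction since $\rho^{-n}_{i,j} = \rho^0_{i,j} \geq 0$ — to be the only mildly delicate point, but it is immediate from the setup: the base case of the induction covers all levels $n = 0, \dots, \max_l h_l$ via the constant backward extension, and for $n$ beyond that the needed earlier level $n - h_i \geq 0$ is already handled. There is no real obstacle here; the main content is simply identifying the right convex decomposition and reading off the CFL threshold. Note that this lemma only gives the lower bound $\rho^{n+1}_{i,j} \geq 0$; the upper bound $\rho^{n+1}_{i,j} \leq R_i$ (the full maximum principle) is a separate argument, presumably stated and proved subsequently, and will rely more essentially on the properties of the saturation function $f_i$ near $R_i$.
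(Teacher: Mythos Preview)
Your proof is correct and follows essentially the same approach as the paper: rewrite the scheme as
\[
\rho^{n+1}_{i,j}=\left(1-\lambda f_i(\rho^n_{i,j+1})V^{n-h_i}_{i,j+1}\right)\rho^n_{i,j}+\lambda f_i(\rho^n_{i,j})V^{n-h_i}_{i,j}\rho^n_{i,j-1}
\]
and observe that both coefficients are nonnegative under the CFL condition~\eqref{eq:multiCFL1}. Your treatment of the delay bookkeeping and the inductive base case via the backward extension is somewhat more explicit than the paper's, but the argument is the same.
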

\begin{proof}
Let us assume that $\rho^{n-l}_{i,j}\geq 0$ for all $j\in\mathbb{Z}$, $i=1,\dots,M$ and $l=0,\dots,\max_ih_i$. Then, we have
$$
\rho^{n+1}_{i,j}=\left(1-\lambda f_i(\rho^n_{i,j+1})V^{n-h_i}_{i,j+1}\right)\rho^n_{i,j}+\lambda f_i(\rho^n_{i,j})V^{n-h_i}_{i,j}\rho^n_{i,j-1}\geq 0,
$$
since all the coefficients are non-negative.
\end{proof}

\begin{lemma}[$\L 1$-bound]\label{multiL1boundteo}
For any $n\in\mathbb{N}$, under the CFL condition \eqref{eq:multiCFL1} the approximate solutions constructed by scheme \eqref{multischema} satisfy
$$
\norma{\rho^n_i}_1=\norma{\rho_i^0}_1,
$$
where $\norma{\rho^n_i}_1:=\Delta x\sum_j\modulo{\rho^n_{i,j}}$ denotes the $\L 1$-norm of $\rho_i^{\Delta x}(n\Delta t,\cdot)$.
\end{lemma}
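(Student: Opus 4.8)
The plan is to show that the $\L1$-norm of each component is exactly conserved by the scheme, which—given positivity from Lemma~\ref{multiposteo}—reduces to showing that $\sum_j \rho^{n+1}_{i,j} = \sum_j \rho^n_{i,j}$. First I would fix a class $i$ and a time level $n$, and use the inductive hypothesis that $\rho^{n-l}_{i,j}\ge 0$ for all $j$ and all relevant $l$, so that $\modulo{\rho^n_{i,j}}=\rho^n_{i,j}$ and the same for $\rho^{n+1}_{i,j}$ once we invoke Lemma~\ref{multiposteo}. Then I would sum the scheme~\eqref{multischema} over $j\in\Z$:
\begin{equation*}
\Delta x\sum_j\rho^{n+1}_{i,j}=\Delta x\sum_j\rho^n_{i,j}-\lambda\Delta x\sum_j\left(\mathcal{F}^n_{i,j+\frac12}-\mathcal{F}^n_{i,j-\frac12}\right),
\end{equation*}
where $\mathcal{F}^n_{i,j+\frac12}=\rho^n_{i,j}f_i(\rho^n_{i,j+1})V^{n-h_i}_{i,j+1}$.

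The key step is that the flux sum telescopes: $\sum_j\left(\mathcal{F}^n_{i,j+\frac12}-\mathcal{F}^n_{i,j-\frac12}\right)=0$. To make this rigorous I would note that $\boldsymbol\rho^0\in\L1$, hence the initial discretization has $\sum_j\modulo{\rho^0_{i,j}}<\infty$; by the positivity and the (not-yet-proven but expectable) finiteness propagation, $\rho^n_{i,j}\to 0$ as $\modulo{j}\to\infty$ fast enough in the $\ell^1$ sense, and since $f_i$ and $v_i$ are uniformly bounded (by $1$ and $V_i$ respectively, from Assumption~\ref{multi hp}), the flux $\mathcal{F}^n_{i,j+\frac12}$ is summable in $j$ and tends to $0$ as $j\to\pm\infty$. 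Therefore the telescoping series vanishes and we conclude $\norma{\rho^{n+1}_i}_1=\norma{\rho^n_i}_1$. Iterating from $n=0$ and using $\rho^{-n}_{i,j}=\rho^0_{i,j}$ gives $\norma{\rho^n_i}_1=\norma{\rho^0_i}_1$, which is moreover $\norma{\rho^0_i}_{\L1(\R)}$ up to the standard fact that the cell-average projection preserves the $\L1$ norm of a nonnegative (or sign-definite) function.

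The main obstacle, such as it is, lies in justifying the interchange of summation and the vanishing of boundary terms in the telescoping argument: one needs the approximate solution to remain in $\ell^1$ uniformly, i.e. a tail estimate ensuring $\sum_{\modulo{j}\ge N}\rho^n_{i,j}\to 0$. This follows from finite speed of propagation of the scheme (the numerical domain of dependence of cell $j$ at time $n$ is contained in $\{j-n,\dots,j+n+\max_i N_i\}$ roughly), together with the fact that $\rho^0_i\in\L1$ so only finitely many initial masses are "seen" and mass cannot be created—but since this is precisely what we are proving, the clean way is to work with finitely supported initial data first (or truncations thereof) where all sums are genuinely finite, establish conservation there, and pass to the limit by density of compactly supported $\L1$ functions, invoking the $\L1$-contractivity/monotonicity of the scheme. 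I would present the compactly-supported case as the core computation and remark that the general case follows by approximation; alternatively, if the paper has already committed to $\BV\cap\L1$ data with compact support (as is standard in this literature), the telescoping is immediate and no approximation argument is needed.
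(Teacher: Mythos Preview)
Your proposal is correct and follows exactly the same approach as the paper: invoke positivity (Lemma~\ref{multiposteo}) to drop the absolute values, sum the scheme over $j$, and telescope the flux differences to zero. The paper's proof is in fact terser than yours---it simply writes the telescoping identity without any discussion of tail behavior or approximation by compactly supported data---so your extra care about justifying the vanishing of boundary terms is more thorough than what the paper provides.
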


\begin{proof}
Thanks to Lemma \ref{multiposteo}, we have
\begin{align*}
\|\rho_i^{n+1}\|_1=\ \Delta x\sum_j\rho^{n+1}_{i,j}&=\Delta x\sum_j\rho^n_{i,j}-\lambda\Delta x\sum_j\left(\mathcal{F}^n_{i,j+\frac{1}{2}}-\mathcal{F}^n_{i,j-\frac{1}{2}}\right) \\
&=\Delta x\sum_j\rho^n_{i,j}-\lambda\Delta x\left(\sum_j\mathcal{F}^n_{i,j+\frac{1}{2}}-\sum_j\mathcal{F}^n_{i,j+\frac{1}{2}}\right) \\
&=\Delta x\sum_j\rho^n_{i,j}=\norma{\rho_i^n}_1.
\end{align*}
\end{proof}

\begin{lemma}[$\L\infty$-bound / weak maximum principle]\label{multiboundteo}
If $\rho^0_{i,j}\in[0,R_i]$ for all $j\in\mathbb{Z}$, $i=1,\dots,M$, and the CFL condition
\begin{equation}\label{multiCFL}
\lambda\leq\frac{1}{\max_i\left\{V_i\left(1+R_i\norma{f_i'}\right)+\Delta xR_i\norma{\omega_i}\norma{v_i'}\right\}}
\end{equation}
holds, then $\rho_{i,j}^n \in [0,R_i]$ for all $j\in\Z$, $i=1,\dots,M$ and $
n\in\N$. 
\end{lemma}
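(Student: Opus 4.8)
The plan is to prove the two-sided bound by induction on the time level $n$, the base case being provided by the backward-constant extension $\rho^{-n}_{i,j}=\rho^0_{i,j}\in[0,R_i]$. The lower bound is free: since for each $i$ one has $V_i\leq V_i(1+R_i\norma{f_i'})+\Delta x R_i\norma{\omega_i}\norma{v_i'}$, the CFL condition \eqref{multiCFL} implies \eqref{eq:multiCFL1}, and hence positivity at every level holds by Lemma \ref{multiposteo}. In particular all arguments of the $v_i$'s are non-negative, so $V^{m}_{i,j}\in[0,V_i]$ for every $m,i,j$. It then remains to show that $\rho^{n+1}_{i,j}\leq R_i$ assuming $\rho^n_{i,\ell}\leq R_i$ for all $\ell$.

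Fix $i$ and $j$ and write the scheme \eqref{multischema} in the incremental form
\[
\rho^{n+1}_{i,j}=\rho^n_{i,j}\Big(1-\lambda f_i(\rho^n_{i,j+1})V^{n-h_i}_{i,j+1}\Big)+\lambda\,\rho^n_{i,j-1}\,f_i(\rho^n_{i,j})\,V^{n-h_i}_{i,j}.
\]
Under \eqref{multiCFL} (via \eqref{eq:multiCFL1}) the first bracket lies in $[0,1]$, and the coefficient $\lambda f_i(\rho^n_{i,j})V^{n-h_i}_{i,j}$ of $\rho^n_{i,j-1}$ is non-negative; since none of $\rho^n_{i,j+1}$, $V^{n-h_i}_{i,j+1}$, $V^{n-h_i}_{i,j}$ depends on $\rho^n_{i,j-1}$, replacing $\rho^n_{i,j-1}$ by its upper bound $R_i$ gives $\rho^{n+1}_{i,j}\leq\psi(\rho^n_{i,j})$, where
\[
\psi(\sigma):=\sigma\Big(1-\lambda f_i(\rho^n_{i,j+1})V^{n-h_i}_{i,j+1}\Big)+\lambda R_i\,f_i(\sigma)\,V^{n-h_i}_{i,j}.
\]
The crucial step is to show that $\psi$ is non-decreasing on $[0,R_i]$. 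When $h_i\geq1$ this is elementary, since $V^{n-h_i}_{i,j}$ is then a constant and $\psi'(\sigma)\geq 1-\lambda V_i(1+R_i\norma{f_i'})\geq0$. When $h_i=0$, however, $V^{n}_{i,j}=v_i\big(\Delta x\sum_{k}\omega_i^k r^n_{j+k}\big)$ itself depends on $\sigma=\rho^n_{i,j}$ through its $k=0$ term (while $V^{n}_{i,j+1}$ and $\rho^n_{i,j+1}$ do not), with derivative $\Delta x\,\omega_i^0\,v_i'(\cdot)$; using $f_i'\leq0$, $0\leq v_i\leq V_i$, $v_i'\leq0$ and $0\leq\omega_i^0\leq\norma{\omega_i}$ one then obtains
\[
\psi'(\sigma)\geq 1-\lambda\Big(V_i\big(1+R_i\norma{f_i'}\big)+\Delta x R_i\norma{\omega_i}\norma{v_i'}\Big)\geq 0
\]
precisely by \eqref{multiCFL}. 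Consequently $\rho^{n+1}_{i,j}\leq\psi(\rho^n_{i,j})\leq\psi(R_i)=R_i\big(1-\lambda f_i(\rho^n_{i,j+1})V^{n-h_i}_{i,j+1}\big)\leq R_i$, because $f_i(R_i)=0$ and the remaining bracket is at most $1$. This closes the induction.

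The main obstacle is exactly the monotonicity of $\psi$ in the no-delay regime $h_i=0$: there the non-local speed $V^n_{i,j}$ is itself a decreasing function of the current cell density $\rho^n_{i,j}$, so the saturation term $\lambda R_i f_i(\sigma)V^n_{i,j}(\sigma)$ contributes an extra negative piece $\lambda R_i f_i(\sigma)\,\Delta x\,\omega_i^0\,v_i'(\cdot)$ to $\psi'$ which must be absorbed into the CFL restriction — this is what produces the additional term $\Delta x R_i\norma{\omega_i}\norma{v_i'}$ in the denominator of \eqref{multiCFL}, absent in the purely delayed case, and it is the only point where the coupling between the saturation functions and the non-local velocities genuinely enters. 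Everything else — verifying that $V^{m}_{i,j}\in[0,V_i]$ from positivity at earlier levels, that the substitution $\rho^n_{i,j-1}\to R_i$ is legitimate, and that the induction is correctly seeded by the extended initial data — is routine bookkeeping.
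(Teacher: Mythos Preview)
Your proof is correct and follows essentially the same strategy as the paper: both argue by monotonicity of the update map in $\rho^n_{i,j}$ (your $\psi$ is the paper's $\Phi$ with $\rho^n_{i,j-1}$ already replaced by $R_i$), evaluate at $R_i$ using $f_i(R_i)=0$, and distinguish the cases $h_i\geq 1$ and $h_i=0$ in exactly the same way, the latter producing the extra $\Delta x R_i\norma{\omega_i}\norma{v_i'}$ term in the CFL bound. The only cosmetic difference is that you first bound $\rho^n_{i,j-1}$ by $R_i$ and then prove monotonicity of the resulting one-variable map, whereas the paper keeps $\rho^n_{i,j-1}$ fixed at its actual value throughout; this does not change the argument.
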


\begin{proof}
We can rewrite the scheme \eqref{multischema} in the form $\rho^{n+1}_{i,j}=\Phi(\rho^n_{i,j})$, being 
$$
\Phi(\rho^n_{i,j}):=\rho^n_{i,j}-\lambda\left(\rho^n_{i,j}f_i(\rho^n_{i,j+1})V^{n-h_i}_{i,j+1}-\rho^n_{i,j-1}f_i(\rho^n_{i,j})V^{n-h_i}_{i,j}\right).
$$
 We observe that under the CFL condition it holds
 \begin{align*}
     \Phi(0)=&\ \lambda\rho^n_{i,j-1}V^{n-h_i}_{i,j}\leq\lambda R_i V_i\leq R_i,\\
     \Phi(R_i)=&\ R_i-\lambda R_if_i(\rho^n_{i,j+1})V^{n-h_i}_{i,j+1}\leq R_i.
 \end{align*}
To conclude the proof, it is enough to show that $\Phi$ is monotone with respect to $\rho^n_{i,j}$. This follows again from the CFL condition since for $h_i\geq 1$
$$
\frac{\partial\Phi}{\partial\rho^n_{i,j}}(\rho^n_{j})=1-\lambda\left(f_i(\rho^n_{i,j+1})V^{n-h_i}_{i,j+1}-\rho^n_{i,j-1}f_i'(\rho^n_{i,j})V^{n-h_i}_{i,j}\right)\geq 0,
$$ 
and also in the non-delayed case $h_i=0$ we have 
\begin{align*}
    \frac{\partial\Phi}{\partial\rho^n_{i,j}}(\rho^n_{j})=&\ 1-\lambda\left(f_i(\rho^n_{i,j+1})V^n_{i,j+1}-\rho^n_{i,j-1}f_i'(\rho^n_{i,j})V^n_{i,j}\right)\\
    &+\lambda\Delta x\omega^0_i\rho^n_{i,j-1}f_i(\rho^n_{i,j})v_i'\left(\Delta x\sum_{k=0}^{N_i-1}\omega^k_ir^n_{j+k}\right)\geq 0.
\end{align*}
\end{proof}

%\begin{proof}
%We can rewrite the scheme \eqref{multischema} in the form $\rho^{n+1}_{i,j}=H_i(\rho^n_{i,j-1},\rho^n_{i,j},\rho^n_{i,j+1},r^{n-h_i}_j,\dots,r^{n-h_i}_{j+N_i})$, being
%\begin{align*}
%H_i(\rho^n_{i,j-1},\rho^n_{i,j},\rho^n_{i,j+1},r^{n-h_i}_j,\dots,r^{n-h_i}_{j+N_i}):=\rho^n_{i,j}-\lambda\left(\rho^n_{i,j}f_i(\rho^n_{i,j+1})V^{n-h_i}_{i,j+1}-\rho^n_{i,j-1}f_i(\rho^n_{i,j})V^{n-h_i}_{i,j}\right).
%\end{align*}
%We define the points
%\begin{eqnarray*}
%\mathcal{R}^n_{i,j}&=&(\rho^n_{i,j-1},\rho^n_{i,j},\rho^n_{i,j+1},r^{n-h_i}_j,\dots,r^{n-h_i}_{j+N}),\\
%\mathcal{R}^n_{i,max}&=&(\rho^n_{i,j-1},R_i,R_i,r^{n-h_i}_j,\dots,r^{n-h_i}_{j+N}).
%\end{eqnarray*}
%The proof follows from applying the mean value theorem between the said points obtaining 
%$$
%\rho^{n+1}_{i,j}=R_i+\langle\nabla H_i(\mathcal{R}_\xi),\mathcal{R}^n_{i,j}-\mathcal{R}^n_{i,max}\rangle\leq R_i,
%$$
%with $\mathcal{R}_\xi$ between ${R}^n_{i,max}$ and $\mathcal{R}^n_{i,j}$. The last inequality follows from the derivatives of $H_i$ w.r.t. $\rho^n_j$ and $\rho^n_{j+1}$ being both non-negative.
%\end{proof}

\begin{remark} \label{rem:invariant_domain}
{\rm
The saturation function was introduced to indicate the free space available on the road, thus modeling the influence of road capacity on drivers, who slow down in particularly congested traffic conditions. This behavior does not depend on the type of vehicles congesting the road. Therefore, it would be reasonable to assume that each saturation function $f_i$ depends on the total density $r$, rather than on the individual density $\rho_i$. This would lead to the following modified model
\begin{equation}\label{eq:biclasse_total}
\partial_t \rho_i(t,x)+\partial_x\left(\rho_i(t,x)f_i(r(t,x))v_i((r\ast \omega_i)(t-\tau_i,x))\right)=0,\qquad\forall i=1,\dots,M,
\end{equation}
where
$$
f_i\in\C1([0,R];[0,1])~\mbox{ s.t. } ~f_i'\leq 0,~ f_i(0)=1~\mbox{ and }~f_i(R)=0,
$$
assuming $R_1=\dots=R_M=:R$.
Indeed, for the modified model \eqref{eq:biclasse_total}, the simplex
$$
\mathcal{S}:=\left\{\boldsymbol\rho\in\R^M~\left|~\sum_{i=1}^M\rho_i\leq R,~\rho_i\geq 0~\mbox{ for }i=1,\dots,M\right.\right\}
$$
is an invariant domain, as it holds for the classical (local) multi-population model~\cite{BenzoniColombo2003}, see~\cite[Lemma 2.3]{ChiarelloGoatin2019}. 

\begin{lemma}[$\L\infty$-bound / weak maximum principle]\label{multiboundteo_mod}
Under the CFL condition~\eqref{multiCFL}, which now reads 
\begin{equation} \label{CFLsimplified}
\lambda\leq\frac{1}{\max_i\left\{V_i\left(1+R\norma{f_i'}\right)+\Delta xR\norma{\omega_i}\norma{v_i'}\right\}},
\end{equation}
for any initial datum $\boldsymbol\rho^0_j=(\rho^0_{1,j},\dots,\rho^0_{M,j})\in\mathcal{S}$ for all $j\in\Z$, the approximate solutions to \eqref{eq:biclasse_total} computed by the scheme 
\begin{equation}
\rho^{n+1}_{i,j}=\rho^n_{i,j}-\lambda\left(\rho^n_{i,j}f_i(r^n_{j+1})V^{n-h_i}_{i,j+1}-\rho^n_{i,j-1}f_i(r^n_{j})V^{n-h_i}_{i,j}\right)\label{scheme_mod}
\end{equation}
satisfy the following uniform bounds:
$$
\boldsymbol\rho^n_j=(\rho^n_{1,j},\dots,\rho^n_{M,j})\in\mathcal{S},\qquad\forall j\in\Z,~n\in \N.
$$
\end{lemma}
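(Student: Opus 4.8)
The plan is to proceed by induction on $n$, in the spirit of Lemmas~\ref{multiposteo} and~\ref{multiboundteo}, but transporting the joint constraint that defines $\mathcal{S}$ rather than componentwise bounds. I would assume that $\boldsymbol\rho^m_j\in\mathcal{S}$ for all $j\in\Z$ and all $m\le n$ — the base case being the hypothesis $\boldsymbol\rho^0_j\in\mathcal{S}$ together with the backward extension $\boldsymbol\rho^{-m}_j=\boldsymbol\rho^0_j$ — so that in particular $\rho^n_{i,j}\ge0$, $r^n_j\le R$, and therefore $0\le V^{n-h_i}_{i,j}\le V_i$ and $0\le f_i(r^n_j)\le1$ for every $i,j$. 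It then suffices to show that $\rho^{n+1}_{i,j}\ge0$ for all $i,j$ and that $r^{n+1}_j\le R$ for all $j$.

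Positivity follows exactly as in Lemma~\ref{multiposteo}: rewriting \eqref{scheme_mod} as
\begin{equation*}
\rho^{n+1}_{i,j}=\left(1-\lambda f_i(r^n_{j+1})V^{n-h_i}_{i,j+1}\right)\rho^n_{i,j}+\lambda f_i(r^n_j)V^{n-h_i}_{i,j}\,\rho^n_{i,j-1},
\end{equation*}
both coefficients are non-negative, since $f_i\le1$, $V^{n-h_i}_{i,j}\le V_i$, and \eqref{CFLsimplified} forces $\lambda\max_iV_i\le1$.

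The heart of the matter is the bound $r^{n+1}_j\le R$. Summing \eqref{scheme_mod} over $i$,
\begin{equation*}
r^{n+1}_j=r^n_j-\lambda\sum_{i=1}^M\rho^n_{i,j}f_i(r^n_{j+1})V^{n-h_i}_{i,j+1}+\lambda\sum_{i=1}^M\rho^n_{i,j-1}f_i(r^n_j)V^{n-h_i}_{i,j},
\end{equation*}
I would drop the (non-negative) outgoing-flux sum and estimate $V^{n-h_i}_{i,j}\le V_i$ to obtain $r^{n+1}_j\le\chi(r^n_j)$, where $\chi(p):=p+\lambda\sum_{i=1}^M\rho^n_{i,j-1}f_i(p)V_i$ for $p\in[0,R]$. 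Since $f_i'\le0$ and $\sum_i\rho^n_{i,j-1}=r^n_{j-1}\le R$, one gets $\chi'(p)\ge 1-\lambda R\max_i(V_i\norma{f_i'})\ge0$, the last inequality because \eqref{CFLsimplified} forces $\lambda R\max_i(V_i\norma{f_i'})\le1$; hence $\chi$ is non-decreasing on $[0,R]$. As $f_i(R)=0$ we have $\chi(R)=R$, and therefore $r^{n+1}_j\le\chi(r^n_j)\le\chi(R)=R$, which closes the induction.

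The step to watch — and the only genuine difference from Lemma~\ref{multiboundteo} — is that the saturation terms now couple the $M$ equations, so the componentwise bound $\rho^{n+1}_{i,j}\le R$ is unavailable; one must instead propagate the joint bound $\sum_i\rho^n_{i,j}\le R$, and the mechanism that allows this is precisely that every $f_i$ vanishes at the maximal total density $R$, annihilating the incoming flux on the face $\{r=R\}$ of $\mathcal{S}$. I also note that for this argument the CFL condition \eqref{CFLsimplified} is more than enough and the time delays are irrelevant, since only $0\le V^{n-h_i}_{i,j}\le V_i$ is used.
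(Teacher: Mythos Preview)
Your proof is correct. The paper's argument also proceeds by induction and hinges on the same mechanism ($f_i(R)=0$ kills the incoming flux at the face $\{r=R\}$), but it takes a different route for the upper bound: it views $r^{n+1}_j=:\Phi(\boldsymbol\rho^n_j)$ as a function of the full vector $(\rho^n_{1,j},\dots,\rho^n_{M,j})$ (with neighboring data frozen), checks $\Phi(0,\dots,0)\le R$ and $\Phi\le R$ on the face $\{r^n_j=R\}$, and then invokes componentwise monotonicity $\partial\Phi/\partial\rho^n_{i,j}\ge0$ under the CFL condition to interpolate. Your approach is more elementary: by discarding the non-negative outgoing flux and bounding $V^{n-h_i}_{i,j}\le V_i$, you reduce everything to a single scalar monotone function $\chi$ of $r^n_j$ alone, bypassing the multivariate monotonicity check. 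The paper's route stays formally parallel to the componentwise Lemma~\ref{multiboundteo}; your route is leaner and visibly uses only the weaker consequences $\lambda\max_iV_i\le1$ and $\lambda R\max_i(V_i\norma{f_i'})\le1$ of~\eqref{CFLsimplified}.
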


\begin{proof}
The positivity of the scheme \eqref{scheme_mod} can be shown easily as in Lemma \ref{multiposteo}.
\\To prove that $r_j^n\leq R$ for all $j\in\Z,~n\in \N$, we follow closely the proof of Lemma~\ref{multiboundteo}, see also~\cite[Lemma 2.3]{ChiarelloGoatin2019}. We assume $\boldsymbol\rho^n_j\in\mathcal{S}$ for all $j\in\Z$. Summing~\eqref{scheme_mod} over $i=1,\dots,M$ we get
$$
        r^{n+1}_j=r^n_{j}-\lambda\left(\sum_{i=1}^M\rho^n_{i,j}f_i(r^n_{j+1})V^{n-h_i}_{i,j+1}-\sum_{i=1}^M\rho^n_{i,j-1}f_i(r^n_j)V^{n-h_i}_{i,j}\right)=:\ \Phi(\boldsymbol\rho^n_j).
$$
We have
\begin{align*}
    \Phi(0,\dots,0)= \lambda\sum_{i=1}^M\rho^n_{i,j-1}V^{n-h_i}_{i,j}\leq\lambda V_iR\leq R \quad \hbox{ by~\eqref{CFLsimplified}},
\end{align*}
and, for $\boldsymbol\rho^n_j\in\mathcal{S}$ such that $r^n_j=R$,
\begin{align*}
    \Phi(\boldsymbol\rho^n_j)= R-\lambda\sum_{i=1}^M\rho^n_{i,j}f_i(r^n_{j+1})V^{n-h_i}_{i,j+1}\leq R.
\end{align*}
As in the proof of Lemma~\ref{multiboundteo}, 
the claim follows from the monotonicity of $\Phi$ with respect to each variable, 
as $\frac{\partial\Phi}{\partial\rho^n_{i,j}}\geq 0$ for all $i=1,\dots,M$, by~\eqref{CFLsimplified}.
\end{proof}

However, system~\eqref{eq:biclasse_total} consists of equations coupled also in the local components $f_i$, $i=1,\ldots,M$, and not only in the non-local terms.
For these systems, $\BV$ estimates are not available in general and this is the main reason we are considering both cases in this work.
%Therefore, in this work we consider~\eqref{multiclasse} to avoid overly complex calculations in the . 
A numerical comparison of~\eqref{multiclasse} and~\eqref{eq:biclasse_total} is presented in Section \ref{saturationtotal}.
}\end{remark}

\section{$\BV$ estimates}\label{multiconvergencemono}
%Since in the scalar model \eqref{oneclass} the approximate solutions constructed using adapted HW numerical scheme have uniformly bounded total variation (see \cite[Proposition 3]{CiaramagliaGoatinPuppo2024}), in the sequel we derive global $\BV$ estimates also for our multi-class model \eqref{multiclasse}. We will use such estimates to prove the convergence of the scheme in Section \ref{sec:convergence}. 
The proofs in this section follow closely~\cite{CiaramagliaGoatinPuppo2024}, which we refer to for further details on the sketched calculations. 

We start with the $\BV$ estimate in space, noticing that we always intend $\BV\subset\L1$.

\begin{proposition}[Spatial $\BV$-bound]\label{multispaceBVteo}
    Let Assumption~\ref{multi hp} and the CFL condition \eqref{multiCFL} hold. Then, for any initial data $\boldsymbol\rho^0\in\BV(\R;[0,R_1]\times\dots\times[0,R_M])$, the numerical solution $\boldsymbol\rho^{\Delta x}$ given by the HW scheme \eqref{multischema} has bounded total variation for $t\in[0,T]$, uniformly in $\Delta x$, for every time horizon $T>0$.
\end{proposition}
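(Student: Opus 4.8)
The strategy is the classical one for Hilliges--Weidlich-type schemes for nonlocal conservation laws, adapted to the multi-class delayed setting: fix a class index $i$, write down the evolution equation for the discrete spatial difference $\rho^{n+1}_{i,j+1}-\rho^{n+1}_{i,j}$ obtained from \eqref{multischema}, estimate $\sum_j|\rho^{n+1}_{i,j+1}-\rho^{n+1}_{i,j}|$ in terms of $\sum_j|\rho^{n}_{i,j+1}-\rho^{n}_{i,j}|$ plus error terms coming from the variation of the nonlocal speed coefficients $V^{n-h_i}_{i,j}$, and then close a Gronwall-type inequality in $n$. Since the components of \eqref{multiclasse} are coupled only through the nonlocal velocities (and not through the saturation terms $f_i$), each class can be treated essentially independently, the coupling entering only through the total density $r^n_j=\sum_\ell\rho^n_{\ell,j}$ inside $V^{n-h_i}_{i,j}$.

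First I would subtract the scheme \eqref{multischema} at $j+1$ and at $j$ and reorganize the resulting expression so that the factor $1-\lambda\big(f_i(\rho^n_{i,j+1})V^{n-h_i}_{i,j+1}-\rho^n_{i,j-1}f_i'(\rho^n_{i,j})V^{n-h_i}_{i,j}\big)$ and the factor $\lambda f_i(\rho^n_{i,j})V^{n-h_i}_{i,j}$ appear as multipliers of $\rho^n_{i,j}-\rho^n_{i,j-1}$ and $\rho^n_{i,j+1}-\rho^n_{i,j}$ respectively --- these coefficients are nonnegative and sum (across consecutive $j$) to something controlled by $1$ under the CFL condition \eqref{multiCFL}, exactly as in the monotonicity argument of Lemma~\ref{multiboundteo}. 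The remaining terms, after telescoping, involve the discrete spatial differences of $V^{n-h_i}_{i,j}$, i.e. $V^{n-h_i}_{i,j+1}-V^{n-h_i}_{i,j}$ and $V^{n-h_i}_{i,j}-V^{n-h_i}_{i,j-1}$. Using $v_i\in\C2$, $\|v_i'\|<\infty$, and the definition \eqref{Vnj}, one bounds $|V^{n-h_i}_{i,j+1}-V^{n-h_i}_{i,j}|\le \Delta x\,\|v_i'\|\sum_{k=0}^{N_i-1}\omega_i^k\,|r^{n-h_i}_{j+k+1}-r^{n-h_i}_{j+k}|$, and summing over $j$ and using $\Delta x\sum_k\omega_i^k=J_i$ gives a bound of the form $\|v_i'\|\,J_i\,\sum_{\ell=1}^M \mathrm{TV}(\rho^{\Delta x}_\ell(t^{n-h_i},\cdot))$ for the total variation contribution of the speed increments. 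A second-difference term (from $V^{n-h_i}_{i,j+1}-2V^{n-h_i}_{i,j}+V^{n-h_i}_{i,j-1}$) will also appear; it is controlled similarly by $\|v_i''\|$ and a further factor of $\mathrm{TV}$, picking up an extra $\Delta x$ or staying bounded via $\sum_k|\omega_i^{k+1}-\omega_i^k|\le\omega_i(0)$ using $\omega_i'\le0$.

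Collecting everything, one arrives at an inequality of the schematic form
\begin{equation*}
\mathrm{TV}(\rho^{\Delta x}_i(t^{n+1},\cdot)) \le \mathrm{TV}(\rho^{\Delta x}_i(t^{n},\cdot)) + C\,\Delta t \sum_{\ell=1}^M \mathrm{TV}(\rho^{\Delta x}_\ell(t^{n-h_i},\cdot)),
\end{equation*}
with $C$ depending only on the data $V_i,\|v_i'\|,\|v_i''\|,\|f_i'\|,R_i,J_i,\omega_i(0)$ but not on $\Delta x$; here the $\L\infty$-bound of Lemma~\ref{multiboundteo} is used to keep all the $\rho^n_{i,j}$ and hence $f_i,f_i',V$ uniformly bounded. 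Summing over the $M$ classes, setting $\mathcal{T}^n:=\sum_{i=1}^M \mathrm{TV}(\rho^{\Delta x}_i(t^n,\cdot))$ and noting that the delayed indices satisfy $t^{n-h_i}\le t^n$, one gets $\mathcal{T}^{n+1}\le \mathcal{T}^n + C\,\Delta t\,\max_{0\le m\le n}\mathcal{T}^m$ (for negative times $\mathcal{T}^m=\mathcal{T}^0=\mathrm{TV}(\boldsymbol\rho^0)<\infty$ by the backward-constant extension \eqref{eq:initial_datum}). A discrete Gronwall argument then yields $\mathcal{T}^n\le e^{CT}\mathcal{T}^0$ for all $n$ with $t^n\le T$, which is the claimed uniform-in-$\Delta x$ bound; piecewise-constant interpolation in time transfers it to all $t\in[0,T]$.

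The main obstacle is the careful bookkeeping of the multiple error terms arising from the discrete differences of the nonlocal speeds $V^{n-h_i}_{i,j}$ --- in particular making sure the second-difference (curvature) term is genuinely bounded uniformly in $\Delta x$ (this is where $\omega_i'\le0$, so $\sum_k|\omega_i^{k+1}-\omega_i^k|=\omega_i^0-\omega_i^{N_i-1}\le\|\omega_i\|$, is essential) and that the delay merely shifts the time index on the right-hand side without spoiling the Gronwall closure. These computations are routine but lengthy, and the paper defers them to Appendix~\ref{sec: app2}; the structural point is simply that the nonlocal, delayed coupling contributes only lower-order (in the Gronwall sense) terms, so the HW monotonicity backbone controls the variation.
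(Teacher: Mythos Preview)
Your plan is structurally the same as the paper's: subtract the scheme at adjacent cells, isolate the three nonnegative ``monotone'' coefficients (which in fact telescope to exactly $1$ after the index shift, not merely something controlled by $1$), and bound the residual terms coming from first and second spatial differences of $V^{n-h_i}_{i,j}$. One inaccuracy worth flagging: the first-difference-of-$V$ terms (the paper's term \eqref{3}) carry a factor $\Delta^n_{i,j\pm1/2}$ in front, so they are handled via the \emph{pointwise} bound $|V^{n-h_i}_{i,j+1}-V^{n-h_i}_{i,j}|\le 2\Delta x\|v_i'\|\|\omega_i\|R$ (using the $\L\infty$ estimate, not the TV of $r^{n-h_i}$), producing a contribution $\Delta t\,\mathcal{G}_i\,\tv^n_i$ at the \emph{current} step; only the second-difference term (the paper's \eqref{4}), whose prefactor $\rho^n_{i,j-1}f_i(\rho^n_{i,j})$ is bounded, is summed in $j$ against the $V$-increments and yields $\Delta t\,\mathcal{H}_i\sum_\ell\tv^{n-h_i}_\ell$. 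Hence the recursion is $\tv^{n+1}\le(1+\Delta t\mathcal{G})\tv^n+\Delta t\mathcal{H}\sum_i\tv^{n-h_i}$ rather than your simplified form with coefficient $1$ on $\tv^n$.

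The genuine methodological difference is your Gronwall closure. Replacing all delayed terms by $\max_{m\le n}\mathcal{T}^m$ and iterating $S^{n+1}\le(1+C\Delta t)S^n$ gives a clean $e^{CT}\mathcal{T}^0$ bound, which is enough for the proposition as stated. The paper instead iterates the delayed recursion explicitly---splitting into the non-delayed case, then block-by-block over intervals of length $h_{\min},h_{\max},\ldots$---precisely to extract the dependence of the constant on each $\tau_i$ and each $\|\omega_i\|$ (see Remark~\ref{relazioneoss} and the constants $\mathcal{B}_j,\mathcal{B}_T$ in Appendix~\ref{sec: app2}). That sharper form feeds into the $\L1$-Lipschitz constant $\mathcal{K}_i$ and the stability Theorem~\ref{stabilityteo}; your shortcut is valid for the bare BV bound but would not deliver those explicit constants.
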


\begin{proof}
    Let us set
    $
    \Delta^n_{i,j+\frac{1}{2}}:=\rho^n_{i,j+1}-\rho^n_{i,j}.
    $
Using the mean value theorem, we obtain
\begin{align}
\Delta^{n+1}_{i,j+\frac{1}{2}}=\ &\ \left[1-\lambda\left(f_i(\rho^n_{i,j+2})V^{n-h_i}_{i,j+2}-\rho^n_{i,j-1}f_i'(\tilde{\rho}^n_{i,j+\frac{1}{2}})V^{n-h_i}_{i,j+1}\right)\right]\Delta ^n_{i,j+\frac{1}{2}}\label{1}\\
&-\lambda\rho^n_{i,j}f_i'(\tilde{\rho}^n_{i,j+\frac{3}{2}})V^{n-h_i}_{i,j+2}\Delta^n_{i,j+\frac{3}{2}}+\lambda f_i(\rho^n_{i,j+1})V^{n-h_i}_{i,j+1}\Delta^n_{i,j-\frac{1}{2}}\label{2}\\
&-\lambda\left(f_i(\rho^n_{i,j+1})\Delta^n_{i,j-\frac{1}{2}}+\rho^n_{i,j-1}f_i'(\tilde{\rho}^n_{i,j+\frac{1}{2}})\Delta^n_{i,j+\frac{1}{2}}\right)\left(V^{n-h_i}_{i,j+2}-V^{n-h_i}_{i,j+1}\right)\label{3}\\
&-\lambda\rho^n_{i,j-1}f_i(\rho^n_{i,j})\underbrace{\left[\left(V^{n-h_i}_{i,j+2}-V^{n-h_i}_{i,j+1}\right)-\left(V^{n-h_i}_{i,j+1}-V^{n-h_i}_{i,j}\right)\right]}_{(\ast)}\label{4},
\end{align}
with $\tilde{\rho}^n_{i,j+\frac{1}{2}}$ between $\rho^n_{i,j}$ and $\rho^n_{i,j+1}$ for all $j\in\mathbb{Z}$.
The term $(\ast)$ in \eqref{4} can be estimated as
    \begin{eqnarray*}
        (\ast)&=&v_i''(\tilde{\xi}_{i,j+1})\left[\xi_{i,j+\frac{3}{2}}-\xi_{i,j+\frac{1}{2}}\right]\Delta x\sum_{k=1}^{+\infty}\omega_i^{k-1}\sum_{l=1}^M\Delta^{n-h_i}_{l,j+k+\frac{1}{2}}\\
        &&+v_i'(\xi_{i,j+\frac{1}{2}})\Delta x\left(\sum_{k=1}^{+\infty}(\omega_i^{k-1}-\omega_i^k)\sum_{l=1}^M\Delta^{n-h_i}_{l,j+k+\frac{1}{2}}-\omega_i^0\sum_{l=1}^M\Delta^{n-h_i}_{l,j+\frac{1}{2}}\right),
    \end{eqnarray*}
    for some $\xi_{i,j+\frac{3}{2}}$ between $\Delta x\sum_{k=0}^{+\infty}\omega_i^kr^{n-h_i}_{j+k+2}$ and $\Delta x\sum_{k=0}^{+\infty}\omega_i^kr^{n-h_i}_{j+k+1}$, and $\xi_{i,j+\frac{1}{2}}$ between $\Delta x\sum_{k=0}^{+\infty}\omega_i^kr^{n-h_i}_{j+k+1}$ and $\Delta x\sum_{k=0}^{+\infty}\omega_i^kr^{n-h_i}_{j+k}$, and some $\tilde{\xi}_{i,j+1}$ between $\xi_{i,j+\frac{1}{2}}$ and $\xi_{i,j+\frac{3}{2}}$. 
    Since from Lemma \ref{multiboundteo} and the monotonicity of $\omega_i$ we get
    \begin{eqnarray}
        \modulo{\xi_{i,j+\frac{3}{2}}-\xi_{i,j+\frac{1}{2}}}&\leq&\Delta x\left[\sum_{k=1}^{+\infty}\big[\theta\omega_i^{k-1}+(1-\theta)\omega_i^k-\mu\omega_i^k-(1-\mu)\omega_i^{k+1}\big]+4\omega_i^0\right]R\nonumber\\
        &\leq&\Delta x\left[\sum_{k=1}^{+\infty}[\omega_i^{k-1}-\omega_i^{k+1}]+4\omega_i^0\right]\sum_{l=1}^MR_l\leq6\Delta x \omega_i^0R,\label{multidelta xi}
    \end{eqnarray}
with $R:=\sum_{l=1}^MR_l$, then we obtain the bound 
\begin{equation}
\sum_j\modulo{\eqref{4}}\leq\Delta t\mathcal{H}_i\sum_{l=1}^M\sum_j\modulo{\Delta^{n-h_i}_{l,j+\frac{1}{2}}},\label{bound4}
\end{equation}
for $\mathcal{H}_i=R_i\norma{\omega_i}\left(6\norma{v_i''}J_iR+2\norma{v_i'}\right)$.   
Regarding the term \eqref{3}, the bound
\begin{eqnarray}
        \modulo{V^{n-h_i}_{i,j+1}-V^{n-h_i}_{i,j}}&\leq&\norma{v_i'}\Delta x\modulo{\sum_{k=1}^{+\infty}(\omega_i^{k-1}-\omega_i^k)r^{n-h_i}_{j+k}-\omega_i^0r^{n-h_i}_j}\label{multistimamodulo_parziale}\\
        &\leq&2\Delta x\norma{v_i'}\norma{\omega_i}R,\label{multistimamodulo}
\end{eqnarray}
that holds for all $j\in\Z$, implies
\begin{align}
\sum_j\modulo{\eqref{3}}\leq&\ \lambda\sum_j\modulo{V^{n-h_i}_{i,j+3}-V^{n-h_i}_{i,j+2}}\modulo{\Delta^n_{i,j+\frac{1}{2}}}+\lambda R_i\norma{f_i'}\sum_j\modulo{V^{n-h_i}_{i,j+2}-V^{n-h_i}_{i,j+1}}\modulo{\Delta^n_{i,j+\frac{1}{2}}}\nonumber\\
\leq&\ \Delta t\mathcal{G}_i\sum_j\modulo{\Delta^n_{i,j+\frac{1}{2}}},\label{bound3}
\end{align}
being $\mathcal{G}_i=2\norma{v_i'}\norma{\omega_i}\left(1+R_i\norma{f_i'}\right)R$. 
%We remark that $\mathcal{H}_i$ and $\mathcal{G}_i$ coincide respectively with the positive constants $\mathcal{H}$ and $\mathcal{G}$ introduced in \cite{CiaramagliaGoatinPuppo2024} if $M=1$.
Thus, taking the absolute values in the bound of $\Delta^{n+1}_{i,j+\frac{1}{2}}$ at the beginning of the proof, summing on $j$ and using the CFL condition ensures
$$
\sum_j\modulo{\Delta^{n+1}_{i,j+\frac{1}{2}}}\leq(1+\Delta t\mathcal{G}_i)\sum_j\modulo{\Delta^n_{i,j+\frac{1}{2}}}+\Delta t\mathcal{H}_i\sum_{l=1}^M\sum_j\modulo{\Delta^{n-h_i}_{l,j+\frac{1}{2}}}.
$$
Moreover, if we sum over $i=1,\dots,M$ and we set $\tv^n:=\sum_{l=1}^M\sum_j\modulo{\Delta^n_{l,j+\frac{1}{2}}}$, this leads to
\begin{equation}
\tv^{n+1}\leq\left(1+\Delta t\mathcal{G}\right)\tv^n+\Delta t\mathcal{H}\sum_{i=1}^M\tv^{n-h_i},\label{sequencetemp}
\end{equation}
where $\mathcal{H}=\max_i\mathcal{H}_i$ and $\mathcal{G}=\max_i\mathcal{G}_i$.\\
We distinguish between the cases:
\begin{itemize}
    \item the model has no delay, meaning $h_i=0$ for every $i=1,\dots,M$;
    \item at least one of the time delay parameters is strictly positive.
\end{itemize}
In the non-delayed scenario, \eqref{sequencetemp} becomes 
$$
\tv^{n+1}\leq\left(1+\Delta t(\mathcal{G}+M\mathcal{H})\right)\tv^n,
$$
which can be iterated getting
$$
\tv^n\leq\left(1+\Delta t(\mathcal{G}+M\mathcal{H})\right)^n\tv^0.
$$
Then, passing to the limit as $\Delta t\rightarrow 0$, we obtain
$$
\sum_{i=1}^M\tv\left(\rho_i^{\Delta x}(T,\cdot)\right)\leq e^{(\mathcal{G}+M\mathcal{H})T}\sum_{i=1}^M\tv(\rho_i^0).
$$
Now we consider the properly delayed case and we assume for simplicity that $M=2$. The result holds also in the general case of $M$ classes (see Remark \ref{casogen}). 
We denote
$$
h_{\min}=\min\{h_1,h_2\}\qquad\mbox{ and }\qquad h_{\max}=\max\{h_1,h_2\}
$$
and we set $\tau_{\min}=h_{\min}\Delta t$ and $\tau_{\max}=h_{\max}\Delta t$. We distinguish again between two cases:
\begin{itemize}
    \item[(i)] the sole non-zero delay is $\tau_{\max}$ ($h_{\max}\geq 1$ and $h_{\min}=0$);
    \item[(ii)] both $h_{\min}$ and $h_{\max}$ are non-zero.
\end{itemize}
In case (i), \eqref{sequencetemp} becomes
$$
\tv^{n+1}\leq\left(1+\Delta t(\mathcal{G}+\mathcal{H})\right)\tv^n+\Delta t\mathcal{H}\tv^{n-h_{\max}},
$$
where now $\tv^n=\sum_j\modulo{\Delta^n_{1,j+\frac{1}{2}}}+\sum_j\modulo{\Delta^n_{2,j+\frac{1}{2}}}$.
Recalling that \eqref{eq:initial_datum} ensures $\tv^{-l}=\tv^0$ for all $l$, as in \cite{CiaramagliaGoatinPuppo2024} we obtain the estimate
\begin{equation}\label{partial_bvbound}
\tv\left(\rho_1^{\Delta x}(T,\cdot)\right)+\tv\left(\rho_2^{\Delta x}(T,\cdot)\right)\leq\left(2e^{(\mathcal{G}+\mathcal{H})t}-1\right)\left(2e^{(\mathcal{G}+\mathcal{H})\tau_{\max}}-1\right)^{\big\lfloor\frac{T}{\tau_{\max}}\big\rfloor}\left(\tv(\rho^0_1)+\tv(\rho^0_2)\right),
\end{equation}
where we set $T=t+\lfloor T/\tau_{\max}\rfloor\tau_{\max}$.\\
Otherwise, in case (ii) \eqref{sequencetemp} translates into
\begin{equation}
\tv^{n+1}\leq(1+\Delta t\mathcal{G})\tv^n+\Delta t\mathcal{H}\left(\tv^{n-h_{\max}}+\tv^{n-h_{\min}}\right).
\label{sequence}
\end{equation}
In such a case, for the first $h_{\min}$ terms of the sequence \eqref{sequence} we get as above
\begin{align*}
    \tv^1&\leq(1+\Delta t\mathcal{G})\tv^0+2\Delta t\mathcal{H}\tv^0,\\
    &~~\vdots\\
    \tv^{h_{\min}}&\leq(1+\Delta t\mathcal{G})\tv^{h_{\min}-1} +2\Delta t\mathcal{H}\tv^0\\
    &\leq(1+\Delta t\mathcal{G})^{h_{\min}}\tv^0 +2\Delta t\mathcal{H}\tv^0\sum_{k=0}^{h_{\min}-1}(1+\Delta t\mathcal{G})^k\\
    &\leq (1+\Delta t\mathcal{G})^{h_{\min}}\tv^0+2\left((1+\Delta t\mathcal{M})^{h_{\min}}-1\right) \tv^0,
\end{align*}
with $\mathcal{M}:=\max\{\mathcal{H},\mathcal{G}\}$, and this implies the bound
\begin{equation}\label{boundtemp}
\tv^k\leq\left(3(1+\Delta t\mathcal{M})^{h_{\min}}-2\right)\tv^0=:\mathcal{B}_{\min}^{\Delta t}\tv^0,\qquad k=0,\dots,h_{\min}.
\end{equation}
Regarding the following terms, for each $k=1,\dots,\lfloor h_{\max}/h_{\min}\rfloor$ we can write
$$
\tv^{n-h_{\max}}=\tv^0,\quad\tv^{n-h_{\min}}\leq(\mathcal{B}_{\min}^{\Delta t})^{k-1}\tv^0,\quad\mbox{ for all }n=(k-1)h_{\min}+1,\dots,kh_{\min},
$$
and this means that we can reiterate the same argument getting
$$
\tv^k\leq\left(\mathcal{B}_{\min}^{\Delta t}\right)^{\big\lfloor\frac{h_{\max}}{h_{\min}}\big\rfloor}\tv^0,\qquad k=0,\dots,\Big\lfloor\frac{h_{\max}}{h_{\min}}\Big\rfloor h_{\min},
$$
and also 
\begin{equation}
\tv^k\leq\left(3(1+\Delta t\mathcal{M})^{h_{\max}-\big\lfloor\frac{h_{\max}}{h_{\min}}\big\rfloor h_{\min}}-2\right)\left(\mathcal{B}_{\min}^{\Delta t}\right)^{\big\lfloor\frac{h_{\max}}{h_{\min}}\big\rfloor}\tv^0,\qquad k=0,\dots,h_{\max}.\label{boundtemp2}
\end{equation}
Similarly, observing that $h_i=\tau_i/\Delta t$ for $i=1,2$, and passing to the limit as $\Delta t\rightarrow 0$, then for  $T=t+\lfloor T/\tau_{\max}\rfloor\tau_{\max}$ we obtain
\begin{align}
\tv\left(\rho_1^{\Delta x}(T,\cdot)\right)&+\tv\left(\rho_2^{\Delta x}(T,\cdot)\right)\leq\left(3e^{\mathcal{M}t}-2\right)\left(3e^{\mathcal{M}\tau_{\max}}-2\right)^{\big\lfloor\frac{T}{\tau_{\max}}\big\rfloor}\label{TVestimate}\\
&\cdot\left(3e^{\mathcal{M}\left(\tau_{\max}-\big\lfloor\frac{\tau_{\max}}{\tau_{\min}}\big\rfloor\tau_{\min}\right)}-2\right)\left(3e^{\mathcal{M}\tau_{\min}}-2\right)^{\big\lfloor\frac{\tau_{\max}}{\tau_{\min}}\big\rfloor}
\left(\tv(\rho_1^0)+\tv(\rho_2^0)\right),\nonumber
\end{align}
which leads to the statement.
\end{proof}

\begin{remark}[Dependence on the parameters]\label{relazioneoss}
{\rm
The estimates provided in the last part of the proof show the dependence of the total variation on the delays and the look-ahead distances. Indeed, the positive constants $\mathcal{G}$ and $\mathcal{H}$ are linked to the norm $\norma{\omega_i}$. Since if $\omega_i\in\C{1}([0,1];\R ^+)$ then we can choose a re-scaled kernel function such as
    $$
    \omega_{i,L_i}(x)=\frac{1}{L_i}\omega_i\left(\frac{x}{L_i}\right),
    $$
this means that $\mathcal{H},\mathcal{G}\sim 1/L_i$ for all $i=1,2$. As a consequence, the positive constant
 \begin{align}
    \mathcal{M}&:=2\max\left\{\max_{1\leq i\leq M}R_i\norma{\omega_i}\left(3\norma{v_i''}J_iR+\norma{v_i'}\right)\right.,\nonumber\\
    &\left.\sum_{l=1}^MR_l\max_{1\leq i\leq M}\norma{v_i'}\norma{\omega_i}\left(1+R_i\norma{f_i'}\right)+(M-\modulo{\mathcal{J}})\max_{1\leq j \leq M}R_j\norma{\omega_j}\left(3\norma{v_j''}J_jR+\norma{v_j'}\right)\right\},\label{eq:M}
\end{align}
is dimensionally the inverse of time and it is also $\mathcal{M}\sim 1/L_i$. Thus, the non-locality in space gives stability to the solution.
\\Regarding the dependence on the delay, the bounds \eqref{partial_bvbound} and \eqref{TVestimate} imply that increasing delays leads to higher values for the total variation bounds of the solution.
}\end{remark} 
    
\begin{remark}[General case of $M$ classes]\label{casogen} 
{\rm
In the proof we assumed for simplicity $M=2$. 
Now we generalize the estimates for the $M$ classes model, leaving some details in Appendix \ref{sec: app2}. Using the convention for which doing the product on the empty set gives one, as in the proof we can obtain the general bound
    \begin{equation}
    \sum_{i=1}^M\tv\left(\rho_i^{\Delta x}(T,\cdot)\right)\leq\mathcal{B}_T\prod_{j\in\tilde{\mathcal{J}}\atop T\geq\tau_j}\mathcal{B}_j\sum_{i=1}^M\tv(\rho_i^0),\label{eq:TVestimate}
    \end{equation}
    being  the set containing the classes with non-zero delays
    \begin{equation}
    \mathcal{J}:=\left\{i=1,\dots,M\ |\ \tau_i>0\right\}\label{J}
    \end{equation}
    and $\tilde{\mathcal{J}}:=\mathcal{J}/\sim$ the subset defined in \eqref{Jtilde} where identical delay values are counted only once.
   The positive constants $\mathcal{B}_j,\mathcal{B}_T$ are defined respectively in \eqref{Bj} and \eqref{BT} and depend on $\boldsymbol L=\left(L_1,\dots,L_M\right)$ and $\boldsymbol{\tau}$. As an example, we consider the case of $M$ classes having all the same delay $\tau>0$. Then, the constants in \eqref{eq:TVestimate} become
   $$
   \mathcal{B}_T\prod_{j\in\tilde{\mathcal{J}}\atop T\geq\tau_j}\mathcal{B}_j=\left((M+1)e^{\mathcal{M}(T-\lfloor T/\tau\rfloor\tau)}-M\right)\left((M+1)e^{\mathcal{M}\tau}-M\right)^{\lfloor T/\tau\rfloor-1}.
   $$
}\end{remark}

\smallskip

The space-time total variation estimate derives from the following result.

\begin{lemma}[$\L1$ Lipschitz continuity in time]\label{multiL1contteo}
Let Assumption \ref{multi hp} and the CFL condition \eqref{multiCFL} hold. Then, for any initial condition $\boldsymbol\rho^0\in\BV(\R;[0,R_1]\times\dots[0,R_M])$, the approximate solution constructed via the scheme~\eqref{multischema} satisfies 
$$
\norma{\rho_i^{\Delta x}(T,\cdot)-\rho_i^{\Delta x}(T-t,\cdot)}_{1}\leq\mathcal{K}_it,\qquad\forall i=1,\dots,M,
$$
for any $T>0$ and $t\in[0,T]$, with $\mathcal{K}_i$ given by \eqref{k}.
\end{lemma}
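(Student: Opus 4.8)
The plan is to estimate the $\L1$ norm of the difference between the approximate solution at two consecutive time levels and then telescope. Fix $i\in\{1,\dots,M\}$. Using the scheme \eqref{multischema},
\[
\rho^{n+1}_{i,j}-\rho^n_{i,j} = -\lambda\left(\mathcal{F}^n_{i,j+\frac12}-\mathcal{F}^n_{i,j-\frac12}\right)
= -\lambda\left(\rho^n_{i,j}f_i(\rho^n_{i,j+1})V^{n-h_i}_{i,j+1}-\rho^n_{i,j-1}f_i(\rho^n_{i,j})V^{n-h_i}_{i,j}\right),
\]
so that
\[
\norma{\rho_i^{\Delta x}((n+1)\dt,\cdot)-\rho_i^{\Delta x}(n\dt,\cdot)}_1
= \dx\sum_j\modulo{\rho^{n+1}_{i,j}-\rho^n_{i,j}}
= \lambda\,\dx\sum_j\modulo{\mathcal{F}^n_{i,j+\frac12}-\mathcal{F}^n_{i,j-\frac12}}.
\]
First I would bound $\modulo{\mathcal{F}^n_{i,j+\frac12}-\mathcal{F}^n_{i,j-\frac12}}$ by inserting and subtracting intermediate terms: write the difference as
\[
\left(\rho^n_{i,j}-\rho^n_{i,j-1}\right)f_i(\rho^n_{i,j+1})V^{n-h_i}_{i,j+1}
+\rho^n_{i,j-1}\!\left(f_i(\rho^n_{i,j+1})-f_i(\rho^n_{i,j})\right)V^{n-h_i}_{i,j+1}
+\rho^n_{i,j-1}f_i(\rho^n_{i,j})\!\left(V^{n-h_i}_{i,j+1}-V^{n-h_i}_{i,j}\right).
\]
Using the bounds $0\le\rho^n_{i,j}\le R_i$ and $0\le f_i\le 1$ from Lemma~\ref{multiboundteo}, $0\le V^{n-h_i}_{i,j}\le V_i$, the Lipschitz bound $\modulo{f_i(\rho^n_{i,j+1})-f_i(\rho^n_{i,j})}\le\norma{f_i'}\modulo{\Delta^n_{i,j+\frac12}}$, and the velocity increment estimate \eqref{multistimamodulo}, namely $\modulo{V^{n-h_i}_{i,j+1}-V^{n-h_i}_{i,j}}\le 2\dx\norma{v_i'}\norma{\omega_i}R$, I get, after summing over $j$,
\[
\dx\sum_j\modulo{\mathcal{F}^n_{i,j+\frac12}-\mathcal{F}^n_{i,j-\frac12}}
\le V_i\left(1+R_i\norma{f_i'}\right)\dx\sum_j\modulo{\Delta^n_{i,j+\frac12}}
+ 2R_i^2\dx\norma{v_i'}\norma{\omega_i}R\,\dx\sum_j 1,
\]
except the last term must instead be handled more carefully: one sums $\modulo{V^{n-h_i}_{i,j+1}-V^{n-h_i}_{i,j}}$ over $j$ directly, which by the sharper bound \eqref{multistimamodulo_parziale} and monotonicity of $\omega_i$ telescopes to a constant times $\sum_{l=1}^M\sum_j\modulo{\Delta^{n-h_i}_{l,j+\frac12}}$. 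Hence
\[
\dx\sum_j\modulo{\mathcal{F}^n_{i,j+\frac12}-\mathcal{F}^n_{i,j-\frac12}}
\le V_i\left(1+R_i\norma{f_i'}\right)\tv\!\left(\rho_i^{\Delta x}(n\dt,\cdot)\right)
+ R_i\norma{v_i'}\norma{\omega_i}\,\dx\!\sum_{l=1}^M\tv\!\left(\rho_l^{\Delta x}((n-h_i)\dt,\cdot)\right).
\]

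Next I would invoke Proposition~\ref{multispaceBVteo}, which gives a uniform (in $\dx$ and in $n$ with $n\dt\le T$) bound on $\sum_l\tv(\rho_l^{\Delta x}(n\dt,\cdot))$; call it $\mathcal{V}_T$. Therefore each single-step increment satisfies
\[
\norma{\rho_i^{\Delta x}((n+1)\dt,\cdot)-\rho_i^{\Delta x}(n\dt,\cdot)}_1\le \dt\,\mathcal{K}_i,
\qquad
\mathcal{K}_i := \left(V_i\left(1+R_i\norma{f_i'}\right)+ R_i\norma{v_i'}\norma{\omega_i}L_i\right)\mathcal{V}_T,
\]
using $\dx\sum_l\tv(\cdot)\le$ (a multiple of) $\mathcal{V}_T$ and absorbing $\dx\sum_k\omega_i^k\le J_i$ or the support length $L_i$ appropriately — the precise form of $\mathcal{K}_i$ is whatever drops out and is recorded as \eqref{k}. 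Finally, for $t\in[0,T]$ write $T$ and $T-t$ as multiples of $\dt$ (up to the usual $\mathcal{O}(\dt)$ rounding, which vanishes in the limit), say $T=N\dt$ and $T-t=(N-m)\dt$ with $m\dt\approx t$, and telescope:
\[
\norma{\rho_i^{\Delta x}(T,\cdot)-\rho_i^{\Delta x}(T-t,\cdot)}_1
\le\sum_{n=N-m}^{N-1}\norma{\rho_i^{\Delta x}((n+1)\dt,\cdot)-\rho_i^{\Delta x}(n\dt,\cdot)}_1
\le m\,\dt\,\mathcal{K}_i \le \mathcal{K}_i\, t.
\]

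The main obstacle is not the telescoping but obtaining the velocity-increment contribution with the right dependence on the data: a crude bound $\modulo{V^{n-h_i}_{i,j+1}-V^{n-h_i}_{i,j}}\le 2\dx\norma{v_i'}\norma{\omega_i}R$ summed over $j$ diverges, so one must use the sharper representation \eqref{multistimamodulo_parziale} together with the monotonicity $\omega_i'\le 0$ (Assumption~\ref{multi hp}) to make the sum over $j$ collapse onto $\tv(\rho^{\Delta x}(\cdot))$ rather than onto the (infinite) number of cells; this is exactly the step already carried out in the proof of Proposition~\ref{multispaceBVteo}, so I would reference those computations rather than redo them, consistently with the remark at the start of Section~\ref{multiconvergencemono} that the proofs follow \cite{CiaramagliaGoatinPuppo2024}. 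A secondary, purely bookkeeping, point is the handling of the delay offset $n-h_i$: since \eqref{eq:initial_datum} forces $\tv^{-l}=\tv^0$ and Proposition~\ref{multispaceBVteo} bounds $\tv^n$ uniformly for all relevant $n$ (including negative indices), the shifted total variation terms are controlled by the same $\mathcal{V}_T$, so no new difficulty arises.
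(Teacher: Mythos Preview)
Your overall strategy---decompose the flux difference into three pieces, bound the first two by $V_i(1+R_i\norma{f_i'})\tv(\rho_i^{\Delta x})$, control the velocity-increment piece, then telescope over time steps---is exactly what the paper does. The only substantive discrepancy is in the third term. You assert that summing \eqref{multistimamodulo_parziale} over $j$ ``telescopes to a constant times $\sum_l\sum_j|\Delta^{n-h_i}_{l,j+1/2}|$'', i.e.\ to the total variation; in fact, summing \eqref{multistimamodulo_parziale} over $j$ and using $\sum_{k\ge 1}(\omega_i^{k-1}-\omega_i^k)=\omega_i^0$ gives
\[
\sum_j\modulo{V^{n-h_i}_{i,j+1}-V^{n-h_i}_{i,j}}
\;\le\;2\norma{v_i'}\,\omega_i^0\,\Delta x\sum_j\modulo{r^{n-h_i}_j}
\;=\;2\norma{v_i'}\,\omega_i^0\sum_{l=1}^M\norma{\rho_l^0}_1
\]
by Lemma~\ref{multiL1boundteo}. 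This is precisely the bound the paper records in \eqref{multidelta rho}--\eqref{multiBVstima}, and it is what produces the second summand $2R_i\norma{v_i'}\omega_i^0\sum_l\norma{\rho_l^0}_1$ in \eqref{k}. A TV-based bound is also available---the direct mean-value estimate $|V_{j+1}-V_j|\le\norma{v_i'}\Delta x\sum_k\omega_i^k|r_{j+k+1}-r_{j+k}|$, summed in $j$, yields $\norma{v_i'}J_i\sum_l\tv(\rho_l^{n-h_i})$---but that is a different computation from \eqref{multistimamodulo_parziale}, and it gives a Lipschitz constant different from \eqref{k} (a TV term in place of the $\L1$-mass term). Since the lemma asserts the specific constant \eqref{k}, you should use the $\L1$-norm route; otherwise you obtain an equally valid Lipschitz estimate but not the one stated.
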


\begin{proof}
The proof follows closely \cite[Remark 3.5]{CiaramagliaGoatinPuppo2024}. Let $N_T\in\mathbb{N}$ be such that $N_T\Delta t<T\leq(N_T+1)\Delta t$.
We recall from \eqref{multischema} and \eqref{multistimamodulo_parziale} that for every $j\in\mathbb{Z}$ and $n=0,\dots,N_T-1$
 \begin{align}
 \modulo{\rho^{n+1}_{i,j}-\rho^n_{i,j}}\leq&\ \lambda V_i\modulo{\rho^n_{i,j}-\rho^n_{i,j-1}}+\lambda R_i\norma{f_i'}V_i\modulo{\rho^n_{i,j+1}-\rho^n_{i,j}}\nonumber\\
 &+\lambda R_i\norma{v_i'}\Delta x\left(\omega^0_i\modulo{r^{n-h_i}_j}+\sum_{k=1}^{+\infty}(\omega^{k-1}_i-\omega^k_i)\modulo{r^{n-h_i}_{j+k}}\right)\label{multidelta rho}.
 \end{align}
Now, we fix $t=m\Delta t$, with $m\leq N_T$.
Using the last inequality,  Lemma \ref{multiL1boundteo} and Proposition \ref{multispaceBVteo}, we obtain 
\begin{align}
\sum_j\Delta x\modulo{\rho^{N_T}_{i,j}-\rho^{N_T-m}_{i,j}}\leq&\ t\left(1+R_i\norma{f_i'}\right)V_i
\sup_{s\in[0,T]}\tv\left(\rho_i^{\Delta x}(s,\cdot)\right)\nonumber\\
&+2tR_i \norma{v_i'}\omega_i^0\sup_{s\in[0,T]}\sum_{l=1}^M\norma{\rho_l^{\Delta x}(s,\cdot)}_1\leq\mathcal{K}_it,\label{multiBVstima}
\end{align}
with 
\begin{equation}\label{k}
\mathcal{K}_i:=\left(1+R_i\norma{f_i'}\right)V_i C(T,\boldsymbol L,\boldsymbol\tau)\sum_{l=1}^M\tv(\rho_l^0)+2R_i \norma{v_i'} \omega_i^0\sum_{l=1}^M\norma{\rho_l^0}_1,
\end{equation}
where the positive constant $C(T,\boldsymbol L,\boldsymbol\tau)$ is given by \eqref{eq:TVestimate} and it is 
\begin{equation}\label{c}
C(T,\boldsymbol L,\boldsymbol\tau):=\sup_{s\in[0,T]}\mathcal{B}_s\prod_{j\in\tilde{\mathcal{J}}\atop s\geq\tau_j}\mathcal{B}_j,
\end{equation}
with $\tilde{\mathcal{J}}$ defined in \eqref{Jtilde}, $\mathcal{B}_j$ in \eqref{Bj} and $\mathcal{B}_T$ in \eqref{BT}.
\end{proof}

We can now provide an estimate for the discrete total variation in space and time. 

\begin{proposition}[BV estimate in space and time]\label{multiBVteo}
Let Assumption \ref{multi hp} and the CFL condition \eqref{multiCFL} hold. Then, for any initial condition $\boldsymbol\rho^0\in\BV(\R;[0,R_1]\times\dots\times[0,R_M])$, the numerical solution $\boldsymbol\rho^{\Delta x}$ has uniformly bounded total variation on $[0,T]\times\R $, for any time horizon $T>0$.
\end{proposition}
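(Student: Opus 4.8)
The plan is to obtain this as a straightforward consequence of the two preceding estimates, so the proof amounts to bookkeeping. First I would recall that, since $\boldsymbol\rho^{\Delta x}$ is piecewise constant on the space-time grid, for each component $i$ the total variation of $\rho_i^{\Delta x}$ on the strip $[0,T]\times\R$ decomposes into a purely spatial and a purely temporal contribution, namely, with $N_T\in\N$ the largest integer such that $N_T\Delta t<T$,
\begin{equation*}
\tv\left(\rho_i^{\Delta x};[0,T]\times\R\right)=\sum_{n=0}^{N_T}\Delta t\sum_j\modulo{\rho^n_{i,j+1}-\rho^n_{i,j}}+\sum_j\Delta x\sum_{n=0}^{N_T-1}\modulo{\rho^{n+1}_{i,j}-\rho^n_{i,j}},
\end{equation*}
the first term being the contribution of the vertical interfaces $\{\xjpudemi\}\times[t^n,t^{n+1})$ and the second that of the horizontal interfaces $[\xjmudemi,\xjpudemi)\times\{t^{n+1}\}$. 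It then suffices to bound each sum uniformly in $\Delta x$ and sum over $i=1,\dots,M$.

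Next I would bound the spatial part using Proposition~\ref{multispaceBVteo}: the inner sum $\sum_j\modulo{\rho^n_{i,j+1}-\rho^n_{i,j}}$ equals the spatial total variation of $\rho_i^{\Delta x}(t^n,\cdot)$, which is bounded uniformly in $\Delta x$ by $\sup_{s\in[0,T]}\tv(\rho_i^{\Delta x}(s,\cdot))\le C(T,\boldsymbol L,\boldsymbol\tau)\sum_{l=1}^M\tv(\rho_l^0)$ through the explicit constant in \eqref{eq:TVestimate}. Multiplying by $\Delta t$ and summing over the $N_T+1=\mathcal{O}(T/\Delta t)$ time levels then produces a factor of order $T$ (up to a harmless $\Delta t$ correction coming from the last, possibly incomplete, time level), hence a bound independent of $\Delta x$.

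For the temporal part I would reuse the one-time-step estimate already established inside the proof of Lemma~\ref{multiL1contteo}: summing \eqref{multidelta rho} over $j\in\Z$, using $\lambda\Delta x=\Delta t$, the monotonicity of $\omega_i$, the $\L1$ conservation of Lemma~\ref{multiL1boundteo} and the spatial $\BV$ bound of Proposition~\ref{multispaceBVteo}, one gets $\sum_j\Delta x\modulo{\rho^{n+1}_{i,j}-\rho^n_{i,j}}\le\mathcal{K}_i\Delta t$ for each $n=0,\dots,N_T-1$, with $\mathcal{K}_i$ the constant in \eqref{k}; summing over $n$ bounds the temporal part by $\mathcal{K}_iT$. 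Adding the two contributions and summing over $i$ yields
\begin{equation*}
\sum_{i=1}^M\tv\left(\rho_i^{\Delta x};[0,T]\times\R\right)\le(T+\Delta t)\,M\,C(T,\boldsymbol L,\boldsymbol\tau)\sum_{l=1}^M\tv(\rho_l^0)+T\sum_{i=1}^M\mathcal{K}_i,
\end{equation*}
which is finite and, for $\Delta x$ small (say $\Delta t\le 1$), uniform in $\Delta x$, proving the claim.

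There is no genuine obstacle here: all the analytic work has already been done in Proposition~\ref{multispaceBVteo} and Lemma~\ref{multiL1contteo}. The only points requiring a little care are writing the two-dimensional total variation of a step function as the above sum of interface jumps, and checking that the constants $C(T,\boldsymbol L,\boldsymbol\tau)$, $\mathcal{K}_i$ and the various norms of $f_i'$, $v_i'$, $\omega_i$ depend only on $T$, on the initial data and on the model parameters $\boldsymbol L$ and $\boldsymbol\tau$, but not on the mesh size.
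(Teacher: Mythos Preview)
Your proposal is correct and follows the standard route: the paper itself does not spell out the argument but simply refers to \cite[Proposition~3.6]{CiaramagliaGoatinPuppo2024}, whose proof proceeds exactly as you describe, namely by splitting the space-time total variation of the piecewise constant approximation into its spatial and temporal parts and bounding them respectively via Proposition~\ref{multispaceBVteo} and the one-step $\L1$ increment estimate underlying Lemma~\ref{multiL1contteo}. There is nothing to add; your bookkeeping and the handling of the possibly incomplete last time level are fine.
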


\begin{proof}
    See proof of \cite[Proposition 3.6]{CiaramagliaGoatinPuppo2024}.
\end{proof}

\section{Well-posedness of entropy weak solutions}
\label{sec:convergence}

Before stating our main result,  we derive a discrete entropy inequality~\cite{ACG2015,ChiarelloGoatin2018, CiaramagliaGoatinPuppo2024} for the approximate solutions generated by \eqref{multischema}, which will be used to prove that the limit of the HW approximations is indeed an entropy solution in the sense of Definition \ref{multientropy}. \\
For every class $i=1,\dots,M$, let us denote 
\begin{align*}
    G_{i,j+\frac{1}{2}}(u,w)=&\ uf_i(w)V^{n-h_i}_{i,j+1}\\
    F^\kappa_{i,j+\frac{1}{2}}(u,w)=&\ G_{i,j+\frac{1}{2}}(u\wedge\kappa,w\wedge\kappa)-G_{i,j+\frac{1}{2}}(u\vee\kappa,w\vee\kappa), \\
    =&\ \sgn(u-\kappa)\left(uf_i(u)-\kappa f_i(\kappa)\right)V^{n-h_i}_{i,j}\\
&+\modulo{u-\kappa}\left[f_i(w\wedge\kappa)-f_i(u\wedge\kappa)\right]V^{n-h_i}_{i,j+1}\\
&+(u\vee\kappa)\left(\modulo{f_i(u)-f_i(\kappa)}-\modulo{f_i(w)-f_i(\kappa)}\right)V^{n-h_i}_{i,j+1}\\
&+\sgn(u-\kappa)\left(uf_i(u)-\kappa f_i(\kappa)\right)\left(V^{n-h_i}_{i,j+1}-V^{n-h_i}_{i,j}\right)
\end{align*}
with $a\wedge b=\max\{a,b\}$ and $a\vee b=\min\{a,b\}$.
Then the following property holds.

\begin{proposition}[Discrete entropy inequality]
Given Assumption \ref{multi hp}, let $\rho^n_{i,j}$, $j\in\mathbb{Z}$, $i=1,\dots,M$, $n\in\{-\max_lh_l,\dots,0\}\cup\mathbb{N}$, be given by the scheme \eqref{multischema}. Then, if the CFL condition \eqref{multiCFL} is satisfied, we have
\begin{align}
    \modulo{\rho^{n+1}_{i,j}-\kappa}-\modulo{\rho^n_{i,j}-\kappa}&+\lambda\left(F^\kappa_{i,j+\frac{1}{2}}(\rho^n_{i,j},\rho^n_{i,j+1})-F^\kappa_{i,j-\frac{1}{2}}(\rho^n_{i,j-1},\rho^n_{i,j})\right)\nonumber\\
    &+\lambda\sgn(\rho^{n+1}_{i,j}-\kappa)\kappa f_i(\kappa)\left(V^{n-h_i}_{i,j+1}-V^{n-h_i}_{i,j}\right)\leq 0,\label{multidisentropy}
\end{align}
for all $j\in\mathbb{Z}$, $i=1,\dots,M$, $n\in\mathbb{N}_0$, and $\kappa\in\R $.
\end{proposition}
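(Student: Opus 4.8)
The plan is to follow the by-now-standard route for proving discrete Kružkov entropy inequalities for monotone-type finite volume schemes: exploit the fact that the scheme is a monotone function of its arguments (under the CFL condition), and compare the evolution of $\rho^n_{i,j}$ with the evolution of the constant $\kappa$ under a suitably ``frozen'' version of the same scheme. Concretely, fix a class $i$ and a node $j$, and define the numerical-flux function at interface $j+\tfrac12$ by $G_{i,j+1/2}(u,w) = u f_i(w) V^{n-h_i}_{i,j+1}$, so that the scheme reads $\rho^{n+1}_{i,j} = H_{i,j}(\rho^n_{i,j-1},\rho^n_{i,j},\rho^n_{i,j+1})$ with
$$
H_{i,j}(a,b,c) = b - \lambda\bigl(G_{i,j+1/2}(b,c) - G_{i,j-1/2}(a,b)\bigr).
$$
The first step is to record that $H_{i,j}$ is nondecreasing in each of $a,b,c$: monotonicity in $a$ and $c$ is immediate from $f_i,f_i'\le 0$ and $V^{n-h_i}_{i,\cdot}\ge 0$ (note $\partial G_{i,j-1/2}/\partial u \ge 0$ and, because $f_i'\le 0$ together with the extra $v_i'$ contribution when $h_i=0$, the relevant derivative has the right sign), while monotonicity in $b$ is exactly the computation $\partial\Phi/\partial\rho^n_{i,j}\ge 0$ already carried out in the proof of Lemma~\ref{multiboundteo}. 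Since the constant-$\kappa$ state is a (trivial) solution of a ``locally frozen'' scheme — $H_{i,j}(\kappa,\kappa,\kappa) = \kappa - \lambda\kappa f_i(\kappa)\bigl(V^{n-h_i}_{i,j+1}-V^{n-h_i}_{i,j}\bigr)$ — we do not have an exact solution but a controlled source term, which accounts for the third term in \eqref{multidisentropy}.

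**Key steps.** With monotonicity in hand, I would apply the usual Crandall–Tartar / Harten-type argument: evaluate $H_{i,j}$ at $\rho^n_{i,j-1}\vee\kappa,\ \rho^n_{i,j}\vee\kappa,\ \rho^n_{i,j+1}\vee\kappa$ and at $\rho^n_{i,j-1}\wedge\kappa,\ \rho^n_{i,j}\wedge\kappa,\ \rho^n_{i,j+1}\wedge\kappa$, use monotonicity to sandwich $H_{i,j}(\rho^n_{i,j-1},\rho^n_{i,j},\rho^n_{i,j+1})$ and $H_{i,j}(\kappa,\kappa,\kappa)$ between these two quantities, and subtract. Since for any monotone $H$ one has
$$
H(a\vee\kappa, b\vee\kappa, c\vee\kappa) - H(a\wedge\kappa, b\wedge\kappa, c\wedge\kappa) \ \ge\ \bigl|H(a,b,c) - \widetilde H\bigr|
$$
in the appropriate sense — here $\widetilde H = H_{i,j}(\kappa,\kappa,\kappa)$ — one gets
$$
\bigl|\rho^{n+1}_{i,j}-\kappa\bigr| + \lambda\sgn(\rho^{n+1}_{i,j}-\kappa)\kappa f_i(\kappa)\bigl(V^{n-h_i}_{i,j+1}-V^{n-h_i}_{i,j}\bigr)
\ \le\ H_{i,j}(\rho^n\vee\kappa) - H_{i,j}(\rho^n\wedge\kappa).
$$
The right-hand side, written out, telescopes into $|\rho^n_{i,j}-\kappa|$ minus $\lambda$ times the difference of the numerical entropy fluxes $F^\kappa_{i,j\pm1/2}$, where $F^\kappa_{i,j+1/2}(u,w) := G_{i,j+1/2}(u\vee\kappa, w\vee\kappa) - G_{i,j+1/2}(u\wedge\kappa, w\wedge\kappa)$; one then checks that this coincides with the explicit four-line expression for $F^\kappa_{i,j+1/2}$ given before the statement, by splitting into the cases $u\gtrless\kappa$, $w\gtrless\kappa$ and using $a\vee b+a\wedge b=a+b$, $\sgn(u-\kappa)(u-\kappa)=|u-\kappa|$, and the monotonicity of $f_i$ to turn $f_i(w\vee\kappa)-f_i(w\wedge\kappa)$ into $-|f_i(w)-f_i(\kappa)|$. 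Rearranging gives exactly \eqref{multidisentropy}.

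**Main obstacle.** The genuinely delicate point is not the Crandall–Tartar inequality itself but the careful bookkeeping needed to identify the abstract ``$H_{i,j}(\rho^n\vee\kappa)-H_{i,j}(\rho^n\wedge\kappa)$'' with the precise numerical entropy flux written in the paper, while keeping track of the velocity increment $V^{n-h_i}_{i,j+1}-V^{n-h_i}_{i,j}$ that is attached to the frozen source term. In particular one must verify that the source term arising from $H_{i,j}(\kappa,\kappa,\kappa)\ne\kappa$ is exactly $-\lambda\,\sgn(\rho^{n+1}_{i,j}-\kappa)\,\kappa f_i(\kappa)(V^{n-h_i}_{i,j+1}-V^{n-h_i}_{i,j})$ and not, say, with the sign of $\rho^n_{i,j}-\kappa$ — this is where the fact that $H_{i,j}$ maps to $\rho^{n+1}_{i,j}$ (so the $\sgn$ is evaluated at time $n+1$) is used, and it is forced by monotonicity of $H_{i,j}$ in $b$. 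A secondary subtlety: the monotonicity of $a\mapsto H_{i,j}(a,\cdot,\cdot)$ really uses $V^{n-h_i}_{i,j}\ge0$, which holds by Lemma~\ref{multiboundteo} since the argument of $v_i$ is a convex combination of the $r^n\in[0,R]$ values and $v_i\ge0$; I would state this explicitly. Once these are nailed down the rest is the routine telescoping already performed in \cite{CiaramagliaGoatinPuppo2024}, so I would refer there for the remaining algebra.
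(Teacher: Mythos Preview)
Your approach is correct and is precisely the standard monotone-scheme argument that the paper invokes by citing \cite[Proposition~3.4]{ChiarelloGoatin2018}: freeze the non-local velocities $V^{n-h_i}_{i,j}$, check that the resulting three-point operator $H_{i,j}(a,b,c)$ is monotone in each argument under the CFL condition, compare $H_{i,j}(\rho^n\cdot)$ with $H_{i,j}(\kappa,\kappa,\kappa)=\kappa-\lambda\kappa f_i(\kappa)(V^{n-h_i}_{i,j+1}-V^{n-h_i}_{i,j})$, and read off the inequality. One small point to tidy up: since your $H_{i,j}$ is defined with \emph{frozen} velocities, the ``extra $v_i'$ contribution when $h_i=0$'' you mention is not present here and the monotonicity in $b$ is actually simpler than (not ``exactly'') the computation in Lemma~\ref{multiboundteo}; the stronger CFL condition~\eqref{multiCFL} is more than enough. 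Also note that the paper uses the nonstandard convention $\wedge=\max$, $\vee=\min$, opposite to yours, so your $F^\kappa$ and the paper's coincide once this is unraveled.
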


The proof follows closely \cite[Proposition 3.4]{ChiarelloGoatin2018}.

Given the entropy inequality and the $\BV$ estimates given in Proposition~\ref{multiBVteo}, we are able to propose a proof of existence of solutions for every time horizon $T>0$. 

\begin{theorem}[Existence]\label{multiE1}
 Given Assumption \ref{multi hp}, for any $T>0$ the model \eqref{multiclasse}-\eqref{eq:initial_datum} admits an entropy weak solution $\boldsymbol\rho$ in the sense of Definition \ref{multientropy}, such that each component $i=1,\dots,M$ satisfies
 \begin{align}
   &  \rho_i(t,x) \in [0,R_i] &\hbox{for a.e. } x\in\R, t\in [0,T], \label{eq:multilinfty} \\
   &\norma{\rho_i(t,\cdot)}_1 = \|\rho_i^0\|_1 &\hbox{for } t\in [0,T], \label{eq:multil1} \\
    &\norma{\rho_i(t_1,\cdot)-\rho_i(t_2,\cdot)}_1
    \leq \mathcal{K}_i \modulo{t_1 -t_2} &\hbox{for } t_1,t_2\in\, [0,T], \label{eq:multiL1time} 
\end{align}
 for $\mathcal{K}_i$ defined in \eqref{k}, and such that for  $t\in [0,T]$
 \begin{equation}
\sum_{i=1}^M\tv(\rho_i(t,\cdot))\leq\mathcal{B}_t\prod_{j\in\tilde{\mathcal{J}}\atop t\geq\tau_j}\mathcal{B}_j\sum_{i=1}^M\tv(\rho_i^0)\label{eq:multiTV} 
 \end{equation}
where the positive constants $\mathcal{B}_j$ and $\mathcal{B}_t$ are given in \eqref{Bj} and \eqref{BT} and the set $\tilde{\mathcal{J}}$ is defined in \eqref{Jtilde}.
\end{theorem}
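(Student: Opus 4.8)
The plan is to recover $\boldsymbol\rho$ as the $\Lloc1$ limit of the Hilliges--Weidlich approximations $\boldsymbol\rho^{\Delta x}$ built in Section~\ref{sec:FVapprox}, exploiting the a priori estimates already established. Lemma~\ref{multiposteo} and Lemma~\ref{multiboundteo} give the uniform maximum principle $\rho^{\Delta x}_{i,j}\in[0,R_i]$, Lemma~\ref{multiL1boundteo} gives $\norma{\rho^{\Delta x}_i(t,\cdot)}_1=\norma{\rho^0_i}_1$, Proposition~\ref{multispaceBVteo} yields a spatial total variation bound uniform in $\Delta x$, and Lemma~\ref{multiL1contteo} provides the uniform $\L1$ Lipschitz-in-time estimate; together with Proposition~\ref{multiBVteo}, the family $\{\boldsymbol\rho^{\Delta x}\}$ is uniformly bounded in $\BV([0,T]\times\R;\R^M)$. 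A standard compactness argument (Helly's theorem) then gives, up to a subsequence, $\boldsymbol\rho^{\Delta x}\to\boldsymbol\rho$ in $\Lloc1([0,T]\times\R;\R^M)$ and a.e.\ pointwise, for some limit $\boldsymbol\rho$. The bounds \eqref{eq:multilinfty} and \eqref{eq:multiTV} pass to the limit by lower semicontinuity of the $\L\infty$ norm and of the total variation under $\Lloc1$ convergence, while \eqref{eq:multil1} and \eqref{eq:multiL1time} follow from the $\Lloc1$ convergence together with the uniform mass and tail control granted by $\norma{\rho^0_i}_1<\infty$.

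It then remains to show that $\boldsymbol\rho$ satisfies the entropy inequality of Definition~\ref{multientropy}. To this end I would run a Lax--Wendroff-type argument: multiply the discrete entropy inequality \eqref{multidisentropy} by $\Delta x\,\phi(t^n,x_j)$ with $\phi\in\Cc1([0,T)\times\R;\R^+)$, $\phi\geq 0$, sum over $j\in\Z$ and $n\geq 0$, and perform discrete integration by parts (Abel summation) in $n$ and $j$ to recover a discrete version of the target inequality plus a remainder. The remainder vanishes as $\Delta x\to 0$ thanks to the consistency of the scheme, the uniform $\BV$ bound, and the regularity of $v_i,f_i,\omega_i$ in Assumption~\ref{multi hp}. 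The crucial ingredient is the convergence of the discrete nonlocal speeds: by \eqref{wi} and \eqref{Vnj}, $V^{n-h_i}_{i,j}$ approximates $v_i\big((r\ast\omega_i)(t-\tau_i,x)\big)$ with an $\mathcal{O}(\Delta x)$ quadrature error, uniformly because of the $\L\infty$ and $\BV$ bounds on $r^{\Delta x}=\sum_l\rho^{\Delta x}_l$; since $\tau_i=h_i\Delta t$ is kept fixed while $\Delta x\to 0$, the discrete time shift converges to evaluation at $t-\tau_i$, and on $[-\norma{\boldsymbol\tau},0]$ the approximation is the constant extension of $\rho^0_i$ consistently with \eqref{eq:initial_datum}. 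Analogously, $\big(V^{n-h_i}_{i,j+1}-V^{n-h_i}_{i,j}\big)/\Delta x$ converges to $\partial_x v_i\big((r\ast\omega_i)(t-\tau_i,x)\big)$, which produces the source term in Definition~\ref{multientropy}, and the $n=0$ boundary term of the Abel summation, together with the convergence of the cell averages of $\rho^0_i$, yields the term with $\phi(0,\cdot)$. One finally lets $\kappa$ range over a dense subset of $\R$ and extends by continuity of $\kappa\mapsto\modulo{\rho_i-\kappa}$.

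I expect the main obstacle to be precisely the passage to the limit in the nonlocal, delayed speed $V^{n-h_i}_{i,j}$: one has to control simultaneously the quadrature error in the convolution, the strong convergence of $r^{\Delta x}$ and of its spatial differences (where the uniform spatial $\BV$ bound of Proposition~\ref{multispaceBVteo} is indispensable), and the compatibility of the fixed time-delay shift with the limit, including the behaviour on the past-time interval $[-\norma{\boldsymbol\tau},0]$. The remaining error-term bookkeeping is routine and is deferred to Appendix~\ref{sec:app3}.
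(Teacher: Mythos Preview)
Your proposal is correct and follows essentially the same route as the paper: compactness via Helly's theorem from the uniform $\L\infty$ and space--time $\BV$ bounds, then a Lax--Wendroff argument on the discrete entropy inequality~\eqref{multidisentropy}, with the remainder bookkeeping deferred to Appendix~\ref{sec:app3}. The one point worth sharpening is that the most delicate remainder is not the quadrature/convergence error in $V^{n-h_i}_{i,j}$ itself but the extra term produced by the mismatch between $\sgn(\rho^{n+1}_{i,j}-\kappa)$ and $\sgn(\rho^{n}_{i,j}-\kappa)$ in~\eqref{multidisentropy}: after Abel summation this yields a contribution involving the \emph{time increment} $\big(V^{n-h_i}_{i,j+1}-V^{n-h_i}_{i,j}\big)-\big(V^{n-h_i-1}_{i,j+1}-V^{n-h_i-1}_{i,j}\big)$, whose control needs both the spatial and the temporal $\BV$ estimates (Proposition~\ref{multispaceBVteo} and Lemma~\ref{multiL1contteo}) rather than just consistency of the nonlocal speed.
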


\begin{proof}
    Lemma \ref{multiboundteo} and Proposition \ref{multiBVteo} ensure from Helly's Theorem that the approximate solution $\boldsymbol\rho^{\Delta x}$ converges in the $\Lloc{1}$-norm  up to subsequences to some $\rho_i\in\BV([0,T]\times\R ;[0,R_i])$ as $\Delta x \searrow 0$.
By applying the classical procedure of Lax-Wendroff theorem and following closely \cite[Theorem 1.2]{ChiarelloGoatin2019}, one can prove that the limit function $\boldsymbol\rho=\left(\rho_1,\dots,\rho_M\right)$ is an entropy weak solution of~\eqref{multiclasse}-\eqref{eq:initial_datum} in the sense of Definition \ref{multientropy}. The complete proof is in Appendix \ref{sec:app3}.
\end{proof}

\begin{remark}
{\rm
For completeness, we analyze the specific case where $M$ classes share the same delay $\tau > 0$, velocity function $v$, maximum density $R$, and saturation function $f$. Additionally, we assume that $J_1=\dots=J_M=:J_0$. In this scenario, for $t \geq \tau$, the bound \eqref{eq:multiTV} simplifies to 
    $$
    \sum_{i=1}^M\tv(\rho_i(t,\cdot))\leq\left((M+1)e^{\mathcal{M}\left(t-\lfloor t/\tau\rfloor\tau\right)}-M\right)\left((M+1)e^{\mathcal{M}\tau}-M\right)^{\lfloor T/\tau\rfloor}\sum_{i=1}^M\tv(\rho_i^0),
    $$
    where
    $$
    \mathcal{M}=2R\max\left\{\left(3\norma{v''}J_0MR+\norma{v'}\right),M\norma{v'}\left(1+R\norma{f'}\right)\right\}\max_{1\leq i\leq M}\norma{\omega_i},
    $$
which is the same estimate than for the scalar case \cite{CiaramagliaGoatinPuppo2024}.    
We note, as discussed in Remark \ref{relazioneoss}, that $\mathcal{M} \sim 1/L_i$, which further demonstrates that the larger the distance drivers can perceive, the more stabilized the traffic becomes.
}\end{remark}

By properly adapting Kru{\v{z}}kov's doubling of variables technique \cite{Kruzkov}, we now prove a stability result for entropy weak solutions of~\eqref{multiclasse}-\eqref{eq:initial_datum}, from which
follows uniqueness. Below, we denote by $\norma{\boldsymbol\rho(t,\cdot)}_1:=\sum_{i=1}^M\norma{\rho_i(t,\cdot)}_1$ the $\L1$- norm 
    in $\L1(\R;\R^M)$
    and by $\norma{\boldsymbol\tau}_1:=\sum_{i=1}^M\modulo{\tau_i}$ the $\L1$-norm in $\R^M$.

\begin{theorem}[$\L1$ stability]\label{stabilityteo}
    Given Assumption \ref{multi hp}, let $\boldsymbol\rho$ and $\boldsymbol\sigma$ be two entropy weak solutions of \eqref{multiclasse}-\eqref{eq:initial_datum} as in Definition \ref{multientropy}, with initial data $\boldsymbol\rho^0,\boldsymbol\sigma^0\in\BV(\R;[0,R_1]\times\dots\times[0,R_M])$ and delays $\boldsymbol\tau$ and $\boldsymbol\nu$, respectively. Then, for any $T>0$ there holds
    \begin{equation}\label{stimaL1}
        \norma{\boldsymbol\rho(t,\cdot)-\boldsymbol\sigma(t,\cdot)}_{1}\leq e^{K_1T}
        \left(K_3\norma{\boldsymbol\rho^0-\boldsymbol\sigma^0}_{1}
        + K_2\norma{\boldsymbol\tau-\boldsymbol\nu}_1
        \right)
        \qquad\forall t\in[0,T],
    \end{equation}
    with $K_1,K_2,K_3>0$ given by \eqref{Ki}.
\end{theorem}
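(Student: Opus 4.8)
The strategy is the classical Kružkov doubling of variables, adapted to the non-local delayed system, following the pattern already used for the scalar model in \cite{CiaramagliaGoatinPuppo2024} but now carrying the coupling through the total density $r$. I would work componentwise: fix $i\in\{1,\dots,M\}$, write the entropy inequality for $\rho_i$ with Kružkov constant $\kappa=\sigma_i(s,y)$ and, symmetrically, the entropy inequality for $\sigma_i$ with $\kappa=\rho_i(t,x)$, using a test function $\phi=\phi(t,x,s,y)$ built as a product of a standard approximation of the diagonal $\{t=s,\ x=y\}$ (mollifier in $(t-s)/\epsilon_0$ and $(x-y)/\epsilon$) against a cutoff in the ``slow'' variable. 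Adding the two inequalities and sending the mollification parameters to zero in the usual order, the symmetric flux terms collapse and one is left, for each $i$, with an integral inequality for $t\mapsto\norma{\rho_i(t,\cdot)-\sigma_i(t,\cdot)}_1$ whose right-hand side involves (a) the difference of the local fluxes $\rho_if_i(\rho_i)-\sigma_if_i(\sigma_i)$ multiplied by the common velocity, (b) the difference of velocities $v_i((r\ast\omega_i)(t-\tau_i,\cdot))-v_i((s\ast\omega_i)(t-\nu_i,\cdot))$ where $s=\sigma_1+\dots+\sigma_M$, and (c) the source-type terms $\kappa f_i(\kappa)\partial_x v_i(\cdot)$.

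The first family of terms is handled by the Lipschitz continuity of $u\mapsto uf_i(u)$ on $[0,R_i]$ together with the uniform bound $\norma{v_i}\le V_i$, giving a contribution $\lesssim\norma{\rho_i(t,\cdot)-\sigma_i(t,\cdot)}_1$. The velocity-difference terms are the heart of the estimate: I would split
\[
v_i\!\left((r\ast\omega_i)(t-\tau_i,\cdot)\right)-v_i\!\left((s\ast\omega_i)(t-\nu_i,\cdot)\right)
\]
by adding and subtracting $v_i((r\ast\omega_i)(t-\nu_i,\cdot))$. The first piece is controlled, via $\norma{v_i'}$ and $\norma{\omega_i}_1=J_i$, by $\sum_{l}\norma{\rho_l(t-\nu_i,\cdot)-\sigma_l(t-\nu_i,\cdot)}_1$, i.e.\ by $\norma{\boldsymbol\rho(t-\nu_i,\cdot)-\boldsymbol\sigma(t-\nu_i,\cdot)}_1$; using the backward constant extension \eqref{eq:initial_datum} this is bounded by $\sup_{[0,t]}\norma{\boldsymbol\rho-\boldsymbol\sigma}_1$ (replaced by $\norma{\boldsymbol\rho^0-\boldsymbol\sigma^0}_1$ when $t-\nu_i<0$). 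The second piece is a time-delay discrepancy: it is estimated by $\norma{v_i'}\,\norma{\omega_i'}$ (or $\norma{\omega_i}$) times the $\L1$-in-space modulus of continuity in time of $r$ over the interval of length $\modulo{\tau_i-\nu_i}$, which by Lemma \ref{multiL1contteo} (equivalently \eqref{eq:multiL1time}) is $\le\big(\sum_l\mathcal{K}_l\big)\modulo{\tau_i-\nu_i}$; after integrating in $x$ this yields a term proportional to $\modulo{\tau_i-\nu_i}$, hence to $\norma{\boldsymbol\tau-\boldsymbol\nu}_1$ after summing over $i$. The source terms (c) are treated by the same mechanism, now using the $\C2$ regularity of $v_i$ to bound $\partial_x v_i$ by $\norma{v_i'}\norma{\omega_i}\sum_l\tv(\rho_l)$ (finite and uniform by \eqref{eq:multiTV}) and then comparing the two such expressions by the same add-and-subtract argument, producing again contributions of type $\norma{\boldsymbol\rho-\boldsymbol\sigma}_1$ and $\norma{\boldsymbol\tau-\boldsymbol\nu}_1$.

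Summing the $M$ componentwise inequalities and abbreviating $\Theta(t):=\norma{\boldsymbol\rho(t,\cdot)-\boldsymbol\sigma(t,\cdot)}_1$, one arrives at an inequality of the form
\[
\Theta(t)\le K_3\,\Theta(0)+K_2\,\norma{\boldsymbol\tau-\boldsymbol\nu}_1+K_1\!\int_0^t\Big(\sup_{[0,s]}\Theta\Big)\d s ,
\]
where $K_1,K_2,K_3$ collect the constants $V_i$, $\norma{f_i'}$, $\norma{v_i'}$, $\norma{v_i''}$, $J_i$, the $\mathcal{K}_l$, and the total-variation bounds from \eqref{eq:multiTV}. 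Since the right-hand side is nondecreasing in $t$, $\sup_{[0,t]}\Theta$ satisfies the same inequality, and a Grönwall argument gives \eqref{stimaL1} with the stated constants \eqref{Ki}. The main obstacle is bookkeeping rather than conceptual: one must carefully track how the \emph{delayed} arguments $t-\tau_i$, $t-\nu_i$ enter the Kružkov estimate — in particular justifying that the doubling-of-variables limit commutes with the (fixed, discrete-in-the-approximation but now continuous) time shift, and handling the regime $t<\max_i\{\tau_i,\nu_i\}$ where the backward extension \eqref{eq:initial_datum} forces the delayed densities to equal the initial data — together with verifying that all constants stay uniform on $[0,T]$, which is exactly where the global $\BV$ bound \eqref{eq:multiTV} and the time-Lipschitz bound \eqref{eq:multiL1time} are indispensable.
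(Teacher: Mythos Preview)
Your plan is correct and follows essentially the same route as the paper: Kru\v{z}kov doubling applied componentwise (treating each $\rho_i$, $\sigma_i$ as solutions of scalar equations with prescribed non-local velocities $\mathcal V_i$, $\mathcal U_i$), splitting $\mathcal V_i(t-\tau_i,\cdot)-\mathcal U_i(t-\nu_i,\cdot)$ into a delay-discrepancy part controlled via \eqref{eq:multiL1time} and a density-discrepancy part controlled by $\norma{\boldsymbol\rho-\boldsymbol\sigma}_1$, summing over $i$, and closing by Gronwall. The only cosmetic difference is that the paper handles the time-shifted term $\norma{\boldsymbol\rho(t-\nu_i,\cdot)-\boldsymbol\sigma(t-\nu_i,\cdot)}_1$ by the change of variables $s=t-\nu_i$ (splitting the integral into $[-\min\{\tau_i,\nu_i\},0]$, which produces the $k_i\min\{\tau_i,\nu_i\}\norma{\boldsymbol\rho^0-\boldsymbol\sigma^0}_1$ contribution to $K_3$, and $[0,T]$, which feeds directly into $\int_0^T\Theta$) rather than your $\sup_{[0,t]}\Theta$ device---this is what yields the precise form of $K_3$ in \eqref{Ki}.
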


\begin{proof}
    We proceed as in~\cite[Lemma 4 and Proof of Theorem 1]{ChiarelloGoatinRossi2019}.
    The functions $\boldsymbol\rho=(\rho_1,\dots,\rho_M)$ and $\boldsymbol\sigma=(\sigma_1,\dots,\sigma_M)$ are respectively entropy weak solutions of the equations
    \begin{eqnarray*}
        \partial_t\rho_i(t,x)+\partial_x\big(\rho_i(t,x)f_i(\rho_i(t,x))\mathcal{V}_i(t-\tau_i,x)\big)=0,&&\mathcal{V}_i(t,x):=v_i\left(\left(r_\rho\ast\omega_i\right)(t,x)\right),\\
        \partial_t\sigma_i(t,x)+\partial_x\big(\sigma_i(t,x)f_i(\sigma_i(t,x))\mathcal{U}_i(t-\nu_i,x)\big)=0,&&\mathcal{U}_i(t,x):=v_i\left(\left(r_\sigma\ast\omega_i\right)(t,x)\right),
    \end{eqnarray*}
for $i=1,\dots,M$, where we denoted $r_\rho=\sum_{l=1}^M\rho_l$ and $r_\sigma=\sum_{l=1}^M\sigma_l$. Moreover, they fulfill the following initial conditions
    \begin{align*}
        \rho_i(t,x)&=\rho_i^0(x),\qquad\mbox{ for }(t,x)\in\left[-\norma{\boldsymbol\tau},0\right]\times\R ,\\
        \sigma_i(t,x)&=\sigma_i^0(x),\qquad\mbox{ for }(t,x)\in\left[-\norma{\boldsymbol\nu},0\right]\times\R .
    \end{align*}
Following \cite[Proof of Theorem 4.3]{CiaramagliaGoatinPuppo2024}, we get
\begin{align*}
    \norma{\rho_i(T,\cdot)-\sigma_i(T,\cdot)}_{1}\leq\ &\norma{\rho^0_i-\sigma^0_i}_1+Tk_i\modulo{\tau_i-\nu_i}\sum_{l=1}^M\mathcal{K}_l\\
    &+k_i\min\{\tau_i,\nu_i\}\norma{\boldsymbol{\rho}^0-\boldsymbol{\sigma}^0}_1+k_i\int_0^T\norma{\boldsymbol\rho(t,\cdot)-\boldsymbol\sigma(t,\cdot)}_1\d t,
\end{align*}
where
\begin{align*}
        k_i=&\ \norma{\omega_i}\norma{v_i'}\left(1+R_i\norma{f_i'}\right)\sup_{t\in[0,T]}\norma{\rho_i(t,\cdot)}_{\BV(\R )}\\
        &+\norma{v'_i}\left(\norma{\rho_i^0}_1\norma{\partial_x\omega_i}_{\L\infty(0,L_i)}+2R_i\norma{\omega_i}\right)+\norma{\rho_i^0}_1\norma{v''_i}\norma{\omega_i}^2\sum_{l=1}^M\sup_{t\in[0,T]}\norma{\rho_l(t,\cdot)}_{\BV(\R)}.
\end{align*}
Summing over $i=1,\dots,M$, we obtain
\begin{align*}
\norma{\boldsymbol\rho(T,\cdot)-\boldsymbol\sigma(T,\cdot)}_1\leq K_3\norma{\boldsymbol\rho^0-\boldsymbol\sigma^0}_1+K_2\norma{\boldsymbol\tau-\boldsymbol\nu}_1+K_1\int_0^T\norma{\boldsymbol\rho(t,\cdot)-\boldsymbol\sigma(t,\cdot)}_1\d t,
\end{align*}
with 
\begin{equation}\label{Ki}
K_1=\sum_{i=1}^Mk_i,\qquad K_2=T\left(\max_{1\leq i\leq M}k_i\right)\sum_{l=1}^M\mathcal{K}_l,\qquad K_3=1+\sum_{i=1}^Mk_i\min\{\tau_i,\nu_i\}.
\end{equation}
The statement follows from the Gronwall's lemma.
\end{proof}

 Theorem~\ref{stabilityteo} also states stability w.r.t. the delay $\boldsymbol\tau=\left(\tau_1,\dots,\tau_M\right)$, which as a byproduct gives the following convergence result
 (see also~\cite[Corollary 4.4]{CiaramagliaGoatinPuppo2024}). 

\begin{cor}[Convergence for delay tending to zero]\label{stabilitycor}
    Let Assumption \ref{multi hp} hold. Given $\boldsymbol\rho^0\in\BV(\R;[0,R_1]\times\dots\times[0,R_M])$, let $\boldsymbol\rho_{\boldsymbol\tau}\in\L1([0,T]\times\R;\R^M)$ denote the solution of the Cauchy problem \eqref{multiclasse}-\eqref{eq:initial_datum} for $\norma{\boldsymbol\tau}>0$. Let also $\boldsymbol\rho\in\L1([0,T]\times\R;\R^M)$ be the entropy solution of
    \begin{equation}\label{nodelay_bis}
    \partial_t \rho_i(t,x)+\partial_x\big(\rho_i(t,x)f_i(\rho_i(t,x))v_i((r\ast \omega_i)(t,x))\big)=0,\qquad i=1,\dots,M,
    \end{equation}
    with the same initial condition.
    Then, we have the convergence
    $$
    \norma{\boldsymbol\rho_{\boldsymbol\tau}(t,\cdot)-\boldsymbol\rho(t,\cdot)}_{\L1}\rightarrow 0,\qquad\forall t\in[0,T],
    $$
    as $\norma{\boldsymbol\tau}_1\rightarrow 0$.
\end{cor}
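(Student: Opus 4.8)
The plan is to derive the convergence as a direct corollary of the $\L1$ stability estimate \eqref{stimaL1} in Theorem~\ref{stabilityteo}. The idea is to compare the solution $\boldsymbol\rho_{\boldsymbol\tau}$ with delay vector $\boldsymbol\tau$ against the solution $\boldsymbol\rho$ of the non-delayed system \eqref{nodelay_bis}, both starting from the same initial datum $\boldsymbol\rho^0$. The key observation is that $\boldsymbol\rho$ is precisely the entropy weak solution of \eqref{multiclasse}-\eqref{eq:initial_datum} corresponding to the zero delay vector $\boldsymbol\nu=(0,\dots,0)$: indeed, when $\boldsymbol\nu=0$ the past-time interval $[-\norma{\boldsymbol\nu},0]$ collapses to the single point $\{0\}$, the constant backward extension \eqref{eq:initial_datum} is vacuous, and the system \eqref{multiclasse} reduces exactly to \eqref{nodelay_bis} with Cauchy datum $\boldsymbol\rho^0$. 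Hence both $\boldsymbol\rho_{\boldsymbol\tau}$ and $\boldsymbol\rho$ fall within the scope of Theorem~\ref{stabilityteo}.

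Applying \eqref{stimaL1} with $\boldsymbol\sigma=\boldsymbol\rho$, $\boldsymbol\sigma^0=\boldsymbol\rho^0$ and $\boldsymbol\nu=(0,\dots,0)$, the term $K_3\norma{\boldsymbol\rho^0-\boldsymbol\sigma^0}_1$ vanishes identically since the initial data coincide, and we are left with
\[
\norma{\boldsymbol\rho_{\boldsymbol\tau}(t,\cdot)-\boldsymbol\rho(t,\cdot)}_1\leq e^{K_1T}\,K_2\,\norma{\boldsymbol\tau-\boldsymbol 0}_1=e^{K_1T}\,K_2\,\norma{\boldsymbol\tau}_1,
\qquad\forall t\in[0,T].
\]
It then remains only to check that the constants $K_1$ and $K_2$ in \eqref{Ki} stay bounded as $\norma{\boldsymbol\tau}_1\to 0$, so that the right-hand side vanishes. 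This follows because the individual constants $k_i$ depend on $\boldsymbol\tau$ only through the quantities $\sup_{t\in[0,T]}\norma{\rho_l(t,\cdot)}_{\BV(\R)}$, which by Theorem~\ref{multiE1} (bound \eqref{eq:multiTV}) are controlled by $\mathcal{B}_t\prod_{j}\mathcal{B}_j\sum_i\tv(\rho_i^0)$; the constants $\mathcal{B}_t,\mathcal{B}_j$ depend continuously on $\boldsymbol\tau$ and remain bounded — in fact non-increasing, by Remark~\ref{relazioneoss} — as the delays shrink to zero. Thus $K_1$ is uniformly bounded for $\norma{\boldsymbol\tau}_1$ small, and $K_2=T(\max_i k_i)\sum_l\mathcal{K}_l$ is likewise uniformly bounded since the $\mathcal{K}_l$ of \eqref{k} involve the same $\BV$-controlled quantities through $C(T,\boldsymbol L,\boldsymbol\tau)$. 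Letting $\norma{\boldsymbol\tau}_1\to 0$ gives the claim.

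The only genuinely delicate point — the ``hard part'' — is the uniform-in-$\boldsymbol\tau$ bound on the constants as $\norma{\boldsymbol\tau}_1\to 0$: one must verify that the $\BV$ bounds of Theorem~\ref{multiE1} do not blow up in this limit, i.e. that the factors $\mathcal{B}_t$ and $\prod_{j\in\tilde{\mathcal{J}},\,t\ge\tau_j}\mathcal{B}_j$ remain bounded. This is where the structure of the estimate \eqref{eq:multiTV} matters: as each $\tau_j\searrow 0$, the exponent $\lfloor t/\tau_j\rfloor$ in the analogue of \eqref{partial_bvbound}–\eqref{TVestimate} grows, but it is multiplied by a base of the form $\bigl((M+1)e^{\mathcal{M}\tau_j}-M\bigr)$ which tends to $1$, and the product $\lfloor t/\tau_j\rfloor\,\tau_j\le t$ keeps $\mathcal{M}\lfloor t/\tau_j\rfloor\tau_j$ bounded; a short computation (essentially $\bigl((M+1)e^{\mathcal{M}\tau}-M\bigr)^{\lfloor t/\tau\rfloor}\le e^{(M+1)\mathcal{M}t}$ for $\tau$ small) shows the whole product stays bounded — indeed converges — as the delays vanish, which is exactly the observation already recorded in Remark~\ref{relazioneoss}. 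Once this boundedness is in hand, the corollary is immediate from Gronwall-type estimate already proved.
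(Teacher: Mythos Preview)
Your proof is correct and follows exactly the route the paper intends: the corollary is stated immediately after Theorem~\ref{stabilityteo} with no proof, as a direct consequence of the stability estimate~\eqref{stimaL1} applied with $\boldsymbol\sigma^0=\boldsymbol\rho^0$ and $\boldsymbol\nu=\boldsymbol 0$. Your additional care in checking that the constants $K_1,K_2$ remain bounded as $\norma{\boldsymbol\tau}_1\to 0$ is a welcome clarification the paper omits; note that you could also sidestep this by swapping the roles of $\boldsymbol\rho$ and $\boldsymbol\sigma$ in Theorem~\ref{stabilityteo}, so that the $k_i$ are computed from the non-delayed solution and hence are $\boldsymbol\tau$-independent from the start.
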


\section{Numerical tests}\label{sec:numerics}
This section is devoted to present some numerical simulations illustrating the features of the multi-class non-local model with time delay~\eqref{multiclasse}. For simplicity and aiming at modeling HV-AV interactions, we limit the study to  $M=2$ classes of vehicles, thus considering
\begin{equation}\label{biclasse}
\begin{cases}
\partial_t \rho_1(t,x)+\partial_x\left(\rho_1(t,x)f_1(\rho_1(t,x))v_1((r\ast \omega_1)(t-\tau_1,x))\right)=0,\\[10pt]
\partial_t \rho_2(t,x)+\partial_x\left(\rho_2(t,x)f_2(\rho_2(t,x))v_2((r\ast \omega_2)(t-\tau_2,x))\right)=0,
\end{cases}
\end{equation}
where $r=\rho_1+\rho_2$.

For the tests, we consider a ring road of length $l=2$ populated by two classes of vehicles, describing a scenario of heterogeneous traffic flow.
The space domain is given by the interval $[0,2]$ equipped with periodic boundary conditions and the space discretization mesh is $\Delta x=5\cdot 10^{-3}$. Moreover, we consider relative densities setting $R_1=R_2=1$ for simplicity.

\subsection{The effect of saturation}
As remarked previously,  the presence of the saturation functions $f_i$ guarantees that, if $\brho^0(x)\in[0,R_1]\times[0,R_2]$, then each class density $\rho_i$ never exceeds the corresponding maximal value $R_i$ and satisfies 
\begin{equation}\label{limitecomponente}
0\leq\rho_i(t,x)\leq R_i,\qquad\forall x\in \R,~ t>0.
\end{equation}
 This property does not hold in general without saturation, see~\cite[Lemma 2.2]{ChiarelloGoatin2019} and~\cite[Corollary 4.4]{KeimerPflug2019}. 
 \\To illustrate these characteristics, we consider a scenario with a slow and a fast class of vehicles on the road. To model this situation, we choose the widely used Greenshields' velocity function \cite{Greenshields}
\begin{equation}\label{Greenshields}
v_i(r)=V_i\left(1-\frac{r}{R_i}\right),\qquad\mbox{ for }i=1,2,
\end{equation}
with $V_1=0.04$ and $V_2=0.015$. We assume that the faster class $\rho_1$ is initially positioned behind the slower class on the road taking
\begin{equation}\label{data_trasl}
\begin{cases}
\rho_1^0(x)=\frac{8}{9}\operatorname{exp}\left\{-100\left(x-\frac{1}{4}\right)^2\right\},\\[10pt]
\rho_2^0(x)=\frac{8}{9}\operatorname{exp}\left\{-100\left(x-\frac{9}{10}\right)^2\right\},
\end{cases}
\end{equation}
so that an overtaking is expected to occur for sufficiently large times. This initial setup is designed to simulate a critical scenario where vehicles are closely spaced, leading to reduced flow and potential traffic jams. Moreover, it guarantees that, initially, both populations contribute equally to the overall traffic density. The initial data \eqref{data_trasl} is represented component-by-component in the graphs at the top of Figure~\ref{fig:sat}.
For simplicity, we consider equal constant kernels $\omega_i(x)=1/L_i$ with $L_1=L_2=0.1$, and we assume that both classes have the same delay $\tau_1=\tau_2=2.5$.
%We aim to investigate the effect of the saturation on the solution. 
As saturation function, we consider the exponential decreasing function
\begin{equation}\label{expf}
f_i(\rho)=1-e^{50(\rho-R_i)},\qquad i=1,2,
\end{equation}
which was taken in~\cite{CiaramagliaGoatinPuppo2024} as an approximation of the characteristic function $\chi_{[0,R_i)}$, thus acting only on density values close to the maximal density.
We remark that the choice of the saturation functions has an impact on all the estimates provided in the previous sections through $\norma{f_i'}$.
\begin{figure}
\centering
{\includegraphics[width=17cm]{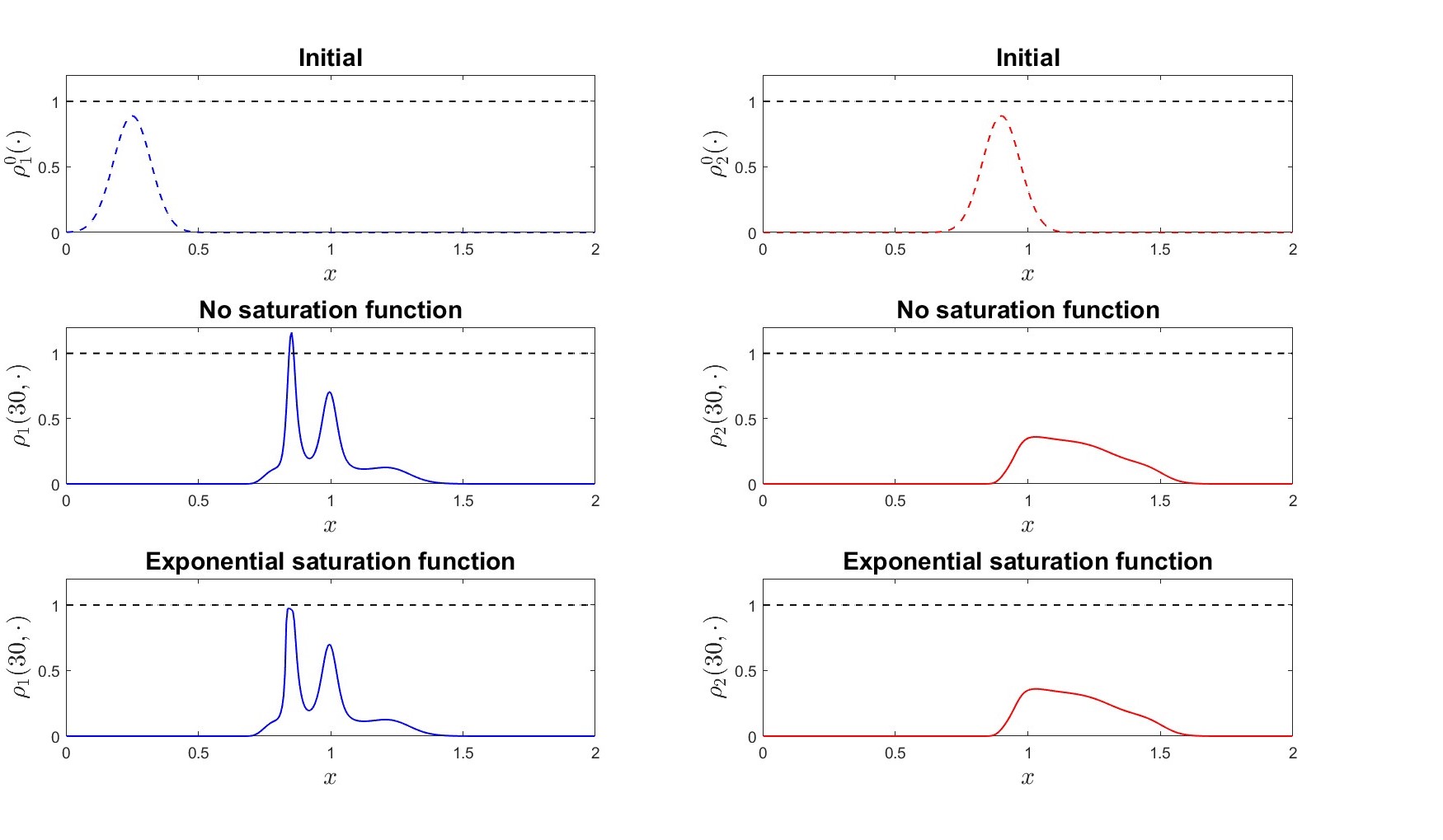}}
\caption{Comparison between the solution of the model \eqref{biclasse} with no saturation ($f_i\equiv 1$ for $i=1,2$) and the solution corresponding to the saturation functions \eqref{expf}. The initial data is given by \eqref{data_trasl} and the final time is $T=30$. \textbf{ Left column}: density $\rho_1(T,\cdot)$ of fast cars ($V_1=0.04$); \textbf{Right column}: density $\rho_2(T,\cdot)$ of slow cars ($V_2=0.015$).
}\label{fig:sat}
\end{figure}

%\begin{figure}
%\centering
%{\includegraphics[width=15cm]{}}
%{\includegraphics[width=15cm]{}}
%\caption{Comparison between the solution of the model with no saturation ($f_i\equiv 1$ for $i=1,2$), and  the solution corresponding to the saturation functions \eqref{expf} with initial data~\eqref{tests}, at the final time $T=0.5$ .\textbf{ Two top rows}: density $\rho_1(T,\cdot)$ of more reactive cars ($\tau_1=0.15$); \textbf{Two bottom rows}: density $\rho_2(T,\cdot)$ of less reactive cars ($\tau_2=0.10$).
%}\label{sat}
%\end{figure}
In Figure \ref{fig:sat}, we compare each component of the solutions of the model \eqref{biclasse} with no saturation (i.e. setting $f_i\equiv 1$ for $i=1,2$)  and  with saturation given by \eqref{expf}, at the final time $T=30$, when the overtaking is occurring. In the left column, we display the first component of the solution $\rho_1$, while the second component $\rho_2$ is presented in the right column of the figure.
As expected, the saturation term constrains the corresponding component $\rho_i(t,x)$ in the interval $[0,R_i]$ (this holds in particular for $i=1$, which violates the maximal density bound in the absence of saturation).
%\begin{figure}
%\centering
%{\includegraphics[width=7cm]{images/}}
%{\includegraphics[width=7cm]{}}
%\caption{Components of the solution of the mixed model \eqref{mixsat} where the first equation has an exponential saturation term~\eqref{expf}. The initial data and all the parameters are the same as in Figure~\ref{sat}. 
%}\label{mixsatfigure}
%\end{figure}
%
%In the considered example, the saturation has an impact only on the first density, being that the component violating the maximum principle. We remark that we can isolate the effect of the saturation only in one of the classes of the model. 

\subsection{Study of the invariant domain}\label{saturationtotal}
In this section we aim to numerically compare the original model \eqref{biclasse} with the modified model 
\begin{equation}\label{mod_biclasse}
\begin{cases}
\del_t\rho_1(t,x)+\del_x\left(\rho_1(t,x)f_1(r(t,x))v_1((r\ast\omega_1)(t-\tau_1,x))\right)=0,\\[5pt]
\del_t\rho_2(t,x)+\del_x\left(\rho_2(t,x)f_2(r(t,x))v_2((r\ast\omega_2)(t-\tau_2,x))\right)=0,
\end{cases}
\end{equation}
discussed in Remark~\ref{rem:invariant_domain} for a general number of classes. In Lemma~\ref{multiboundteo_mod}, we proved that, for~\eqref{mod_biclasse}, the simplex
$$
\mathcal{S}:=\left\{\boldsymbol\rho\in\R^2\left|\right.\rho_1+\rho_2\leq 1,~\rho_i\geq 0~\mbox{ for }i=1,2\right\}
$$
is invariant, unlike the model without saturation studied in~\cite{ChiarelloGoatin2019}. 
\begin{figure}
\centering
{\includegraphics[width=16cm]{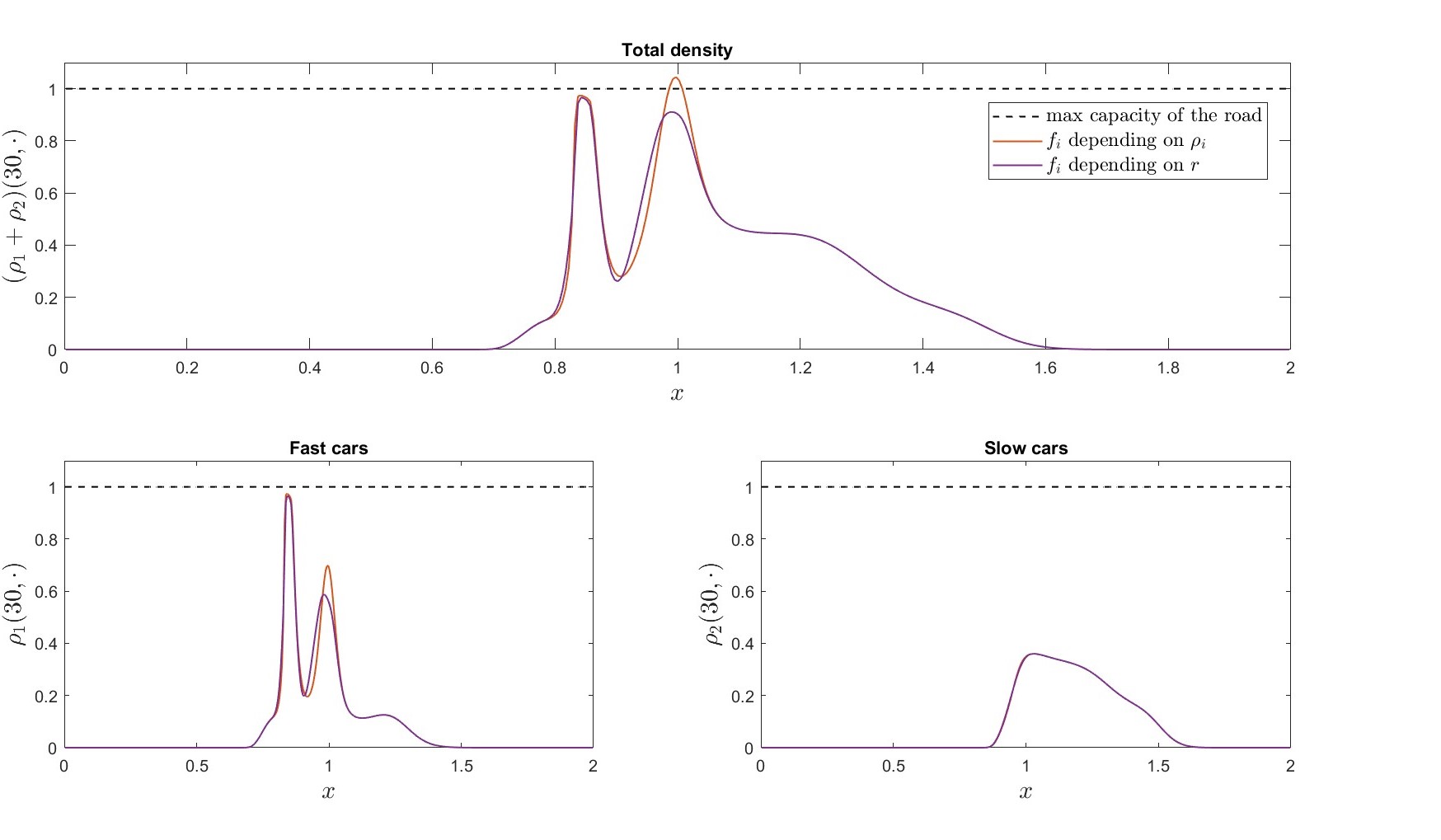}}
\caption{Comparison between the model~\eqref{biclasse} considered in this work, and the modified model~\eqref{mod_biclasse}, where the saturation functions depend on the total density $r=\rho_1+\rho_2$. The initial data is \eqref{data_trasl}, the kernels are constant and the saturation functions are both exponential functions. \textbf{Top row:} Total density. \textbf{Bottom row:} Singular densities taken individually. } 
\label{fig:mixsatfigure_total_real}
\end{figure}
%We remark that system~\eqref{mod_biclasse} does not fit the general assumptions of non-local systems of conservation laws, which require the coupling among equations to be realized exclusively in the non-local terms. In particular, well-posedness results are not available for this system.
Even if well-posendess results for~\eqref{mod_biclasse} are currently missing, 
numerical simulations allow to compare it with~\eqref{biclasse}. In Figure~\ref{fig:mixsatfigure_total_real}, we consider the same scenario as in the previous example and we present the solution at the final time $T=30$ for both the original and the modified models. The numerical results show that, although the individual densities are always constrained between $0$ and $1$, for model~\eqref{biclasse} the total density $r$ exceeds $1$, thus violating  the road’s maximal capacity, while the solution of the modified model~\eqref{mod_biclasse} satisfies $0\leq r(t,x)\leq 1$, consistently with Lemma~\ref{multiboundteo_mod}.
However, in the following tests, we will consider class-specific exponential saturation functions as in~\eqref{expf}, for which we proved well-posedness.

\begin{figure}
\centering
{\includegraphics[width=16cm]{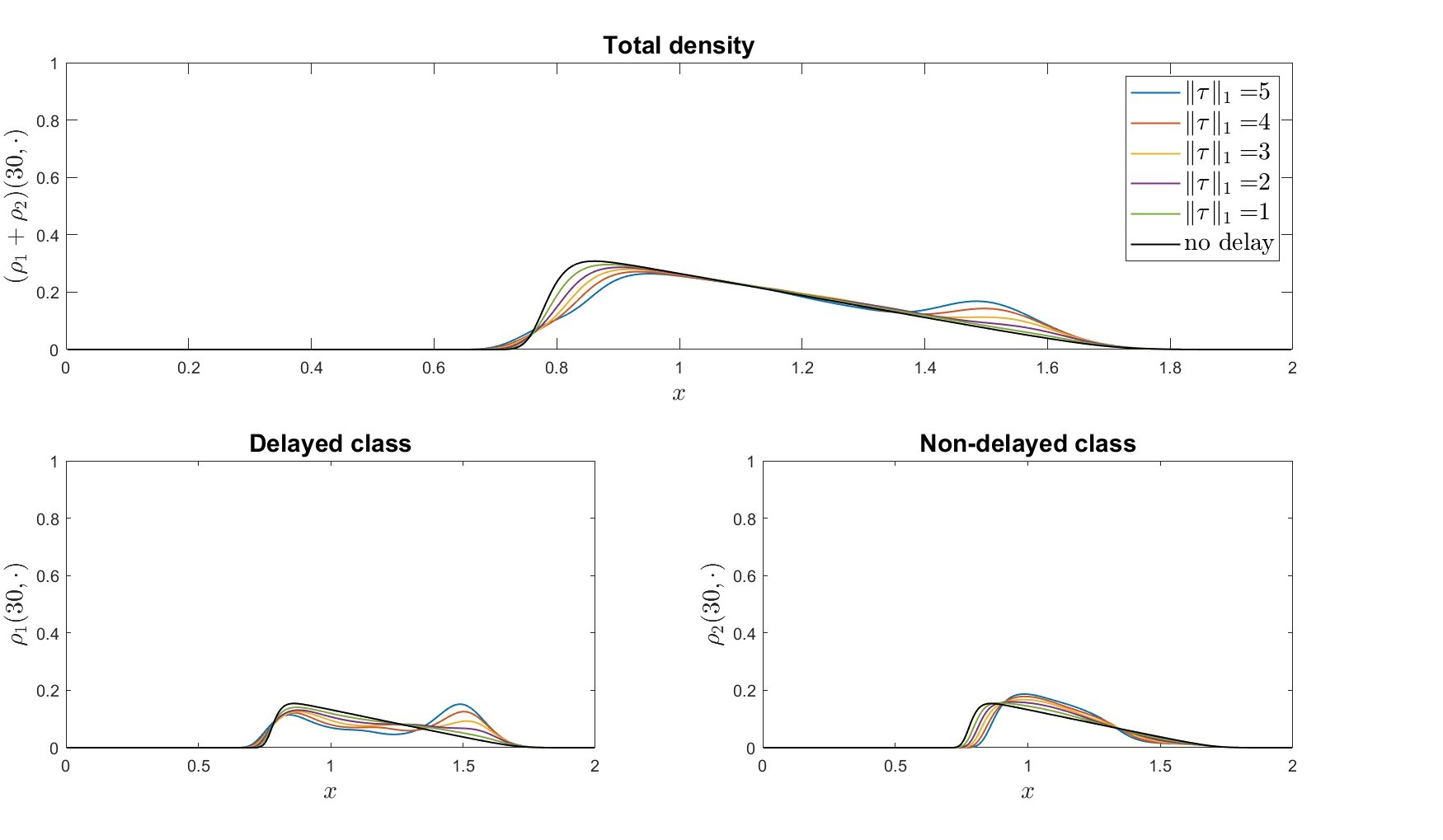}}
\caption{Convergence of the delayed model \eqref{biclasse} with initial data $\rho_1^0(x)=\rho_2^0(x)=\frac{1}{2}r^0(x)$ and initial total density given by \eqref{eq:tests} to the model with no delay \eqref{nodelay_bis} with the same initial data, as $\norma{\boldsymbol\tau}_1\rightarrow 0$. \textbf{Top row}: total density $\rho_1+\rho_2$; \textbf{Bottom row}: densities $\rho_1$ and $\rho_2$ plotted individually.}\label{convergencedelay}
\end{figure}

\subsection{Convergence to the non-delayed model}\label{convergencesec}

The aim of this test is to illustrate the convergence of the solution when the delay tends to zero, which was stated in Corollary~\ref{stabilitycor}. 
We set $\tau_2=0$, thus quantifying the impact of delay only through the parameter $\norma{\boldsymbol\tau}_1=\tau_1$.
In order to ensure that both vehicle classes exert a balanced influence on the system, we assume that the initial density for each class is given by $\rho_1^0(x)=\rho_2^0(x)=\frac{1}{2}r^0(x)$, with
\begin{equation}
r^0(x)=\frac{8}{9}\operatorname{exp}\left\{-100\left(x-\frac{1}{4}\right)^2\right\}.\label{eq:tests}
\end{equation}
For the same reason, we choose for the two populations the linear speeds \eqref{Greenshields} with same maximum speed $V_1=V_2=0.04$ and we take constant weight kernels $\omega_i=1/L_i$, with $L_1=L_2=0.1$. \\
In the top line of Figure~\ref{convergencedelay}, we plot the total densities  at the final time $T=30$ corresponding to decreasing values of $\tau_1\in\{5,4,3,2,1,0\}$. We can observe that, as $\tau_1$ decreases, the solution approaches the solution of the non-delayed model~\eqref{nodelay_bis}. 
Additionally, as further evidence that the delay contributes to the solution's instability, we note that higher values of $\tau_1$ correspond to increasingly oscillatory density profiles. The bottom row displays the two density components taken separately, confirming the result.

\subsection{Introducing AVs to enhance traffic flow dynamics}\label{AVHV}
As anticipated in the Introduction, our main motivation for introducing the multi-class model~\eqref{biclasse} was to study the mutual interactions between HVs and AVs. Indeed, some studies \cite{AvedisovBansalOrosz2022,GBEMAH2016,LEVIN2016103,TALEBPOUR2016143,YE2018269} have shown that even a small number of controlled vehicles are able to regulate traffic flow, dampening stop-and-go waves and reducing fuel consumption and pollutant emissions. 
%\paola{All these works deal with control of AVs. Here we are not controlling anything. So it would be better to refer to other works, showing that AV driving characteristics itself are able to improve traffic flow, e.g. \cite{YE2018269} (``The simulation results indicate that the introduction of CAVs changes the traffic-flow dynamics, increasing the road capacity along with the increase in the CAV-penetration rate within the heterogeneous flow.''), \cite{LEVIN2016103} (`` reduced reaction times might increase capacity and backwards wave speed.'') Ma forse ci sono referenze migliori, più adatte a questo test}\ilaria{In \cite{GBEMAH2016} vehicles trajectories are compared for different fractions of cooperative vehicles in a non-cooperative environment and it is shown that higher fractions lead to a more homogeneous traffic where perturbations are smoothed down. A theoreical study: \cite{TALEBPOUR2016143}, \cite{AvedisovBansalOrosz2022}}
To verify that our model is consistent with these results, we consider a circular road populated by HVs and AVs, 
whose densities are denoted by $\rho_H$, $\rho_A$, respectively.
%associated to the index $1$, and AVs, whose index is $2$. For clarity, we replace the indexes indicating $\rho_H:=\rho_1$ and $\rho_A:=\rho_2$.
Correspondingly, we set the time delay parameters as
\begin{equation}\label{hp tau}
\tau_H=2.5\quad\quad\mbox{ and }\quad\quad\tau_A=0.
\end{equation}
Moreover, we assume the AVs can count on a significant look-ahead distance while humans are subject to their biological limits, which implies $L_H<L_A$. Thus we fix
\begin{equation}\label{hp L}
L_H=0.1 \quad\quad\mbox{ and }\quad\quad L_A=0.2.
\end{equation}
\begin{figure}
\centering
{\includegraphics[width=16cm]{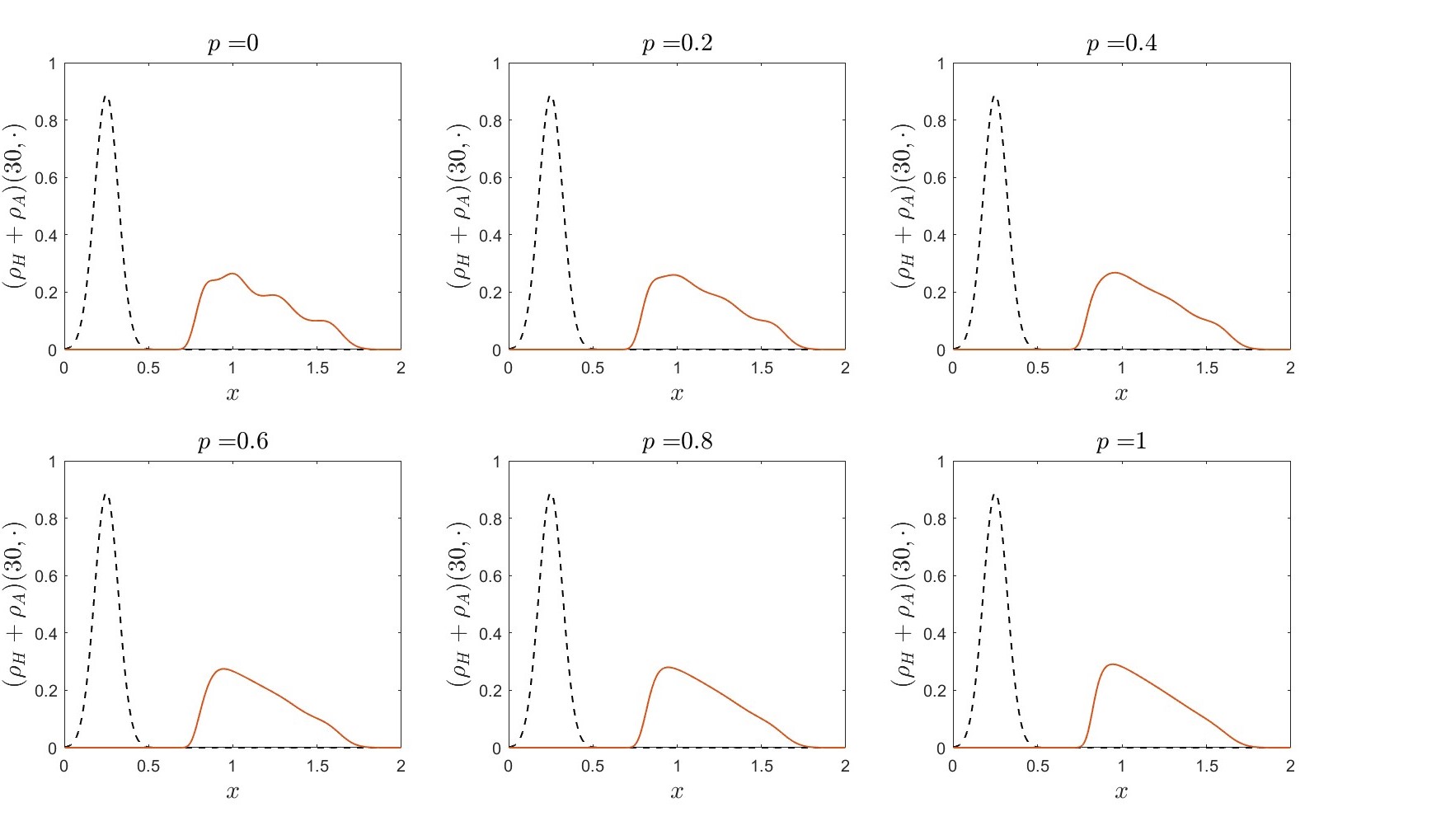}}
\caption{Comparison between the total density of the solution of~\eqref{biclasse} with initial data \eqref{eq:tests}-\eqref{initial1}-\eqref{initial2} and respectively constant (AVs) and linear decreasing (HVs) kernels, corresponding to different values of the penetration rate $p\in[0,1]$.}\label{confrontop}
\end{figure} 
\begin{figure}
\centering
{\includegraphics[width=16cm]{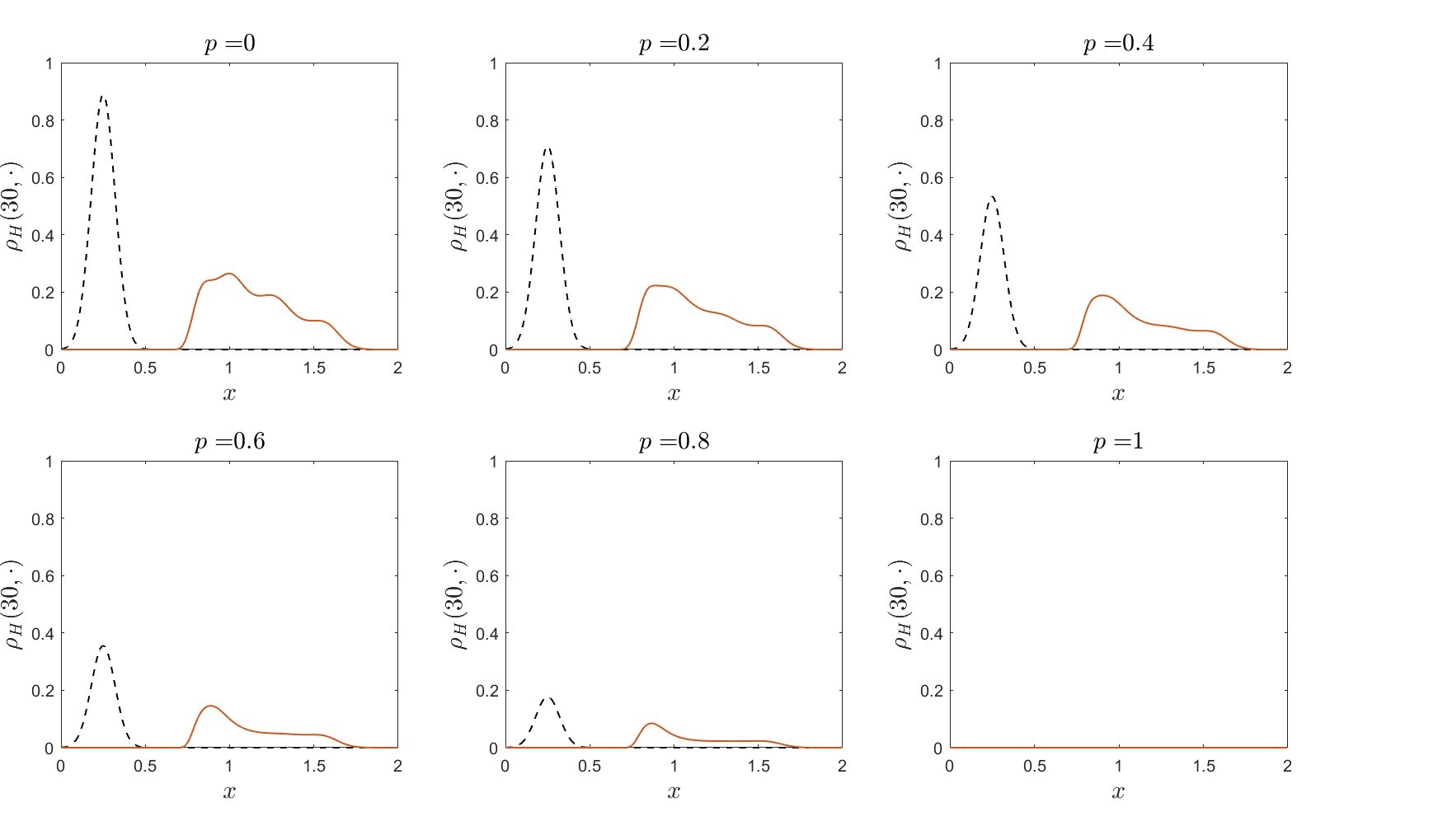}}
{\includegraphics[width=16cm]{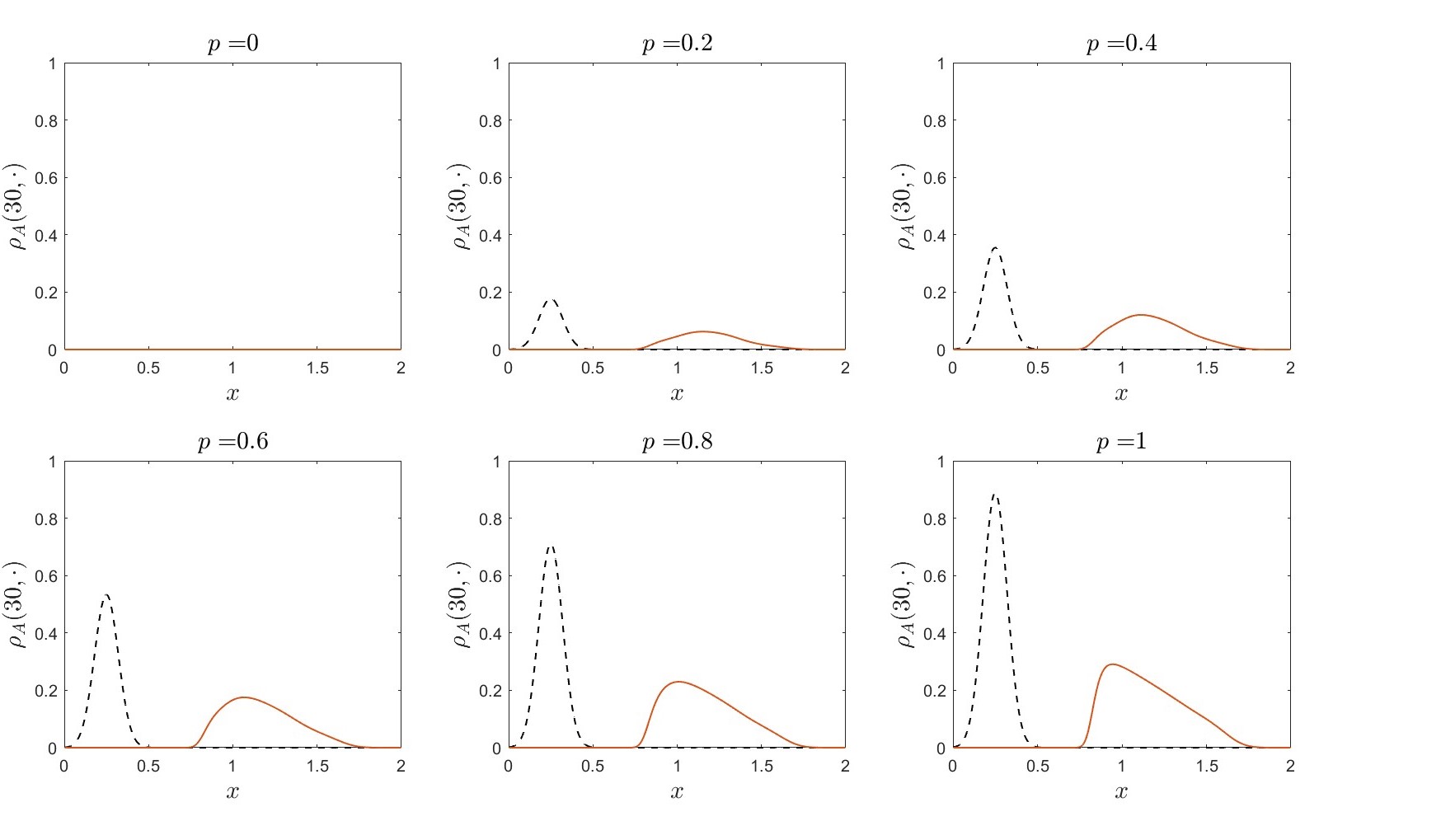}}
\caption{Density profiles of each class taken individually, corresponding to the total densities shown in Figure~\ref{confrontop}. \textbf{Two top rows:} HVs. \textbf{Two bottom rows:} AVs. }\label{confrontopsep}
\end{figure}
As in the previous section, we start choosing  Greenshields' velocity function \eqref{Greenshields} with $V_H=V_A=0.04$. Regarding the weight kernels, we remark that the choice of a constant kernel for the AVs models the fact that they may be able to have the same degree of accuracy on information about surrounding traffic, independently from the distance. On the other hand, it is reasonable to assume that the weight kernel associated to human drivers decreases with the distance. Thus, as in~\cite{ChiarelloGoatin2019}, we choose
\begin{equation}\label{omega}
\omega_H(x)=\dfrac{2}{L_H}\left(1-\dfrac{x}{L_H}\right)\qquad\mbox{and}\qquad\omega_A(x)=\dfrac{1}{L_A}.
\end{equation}
\begin{figure}
\centering
{\includegraphics[width=16cm]{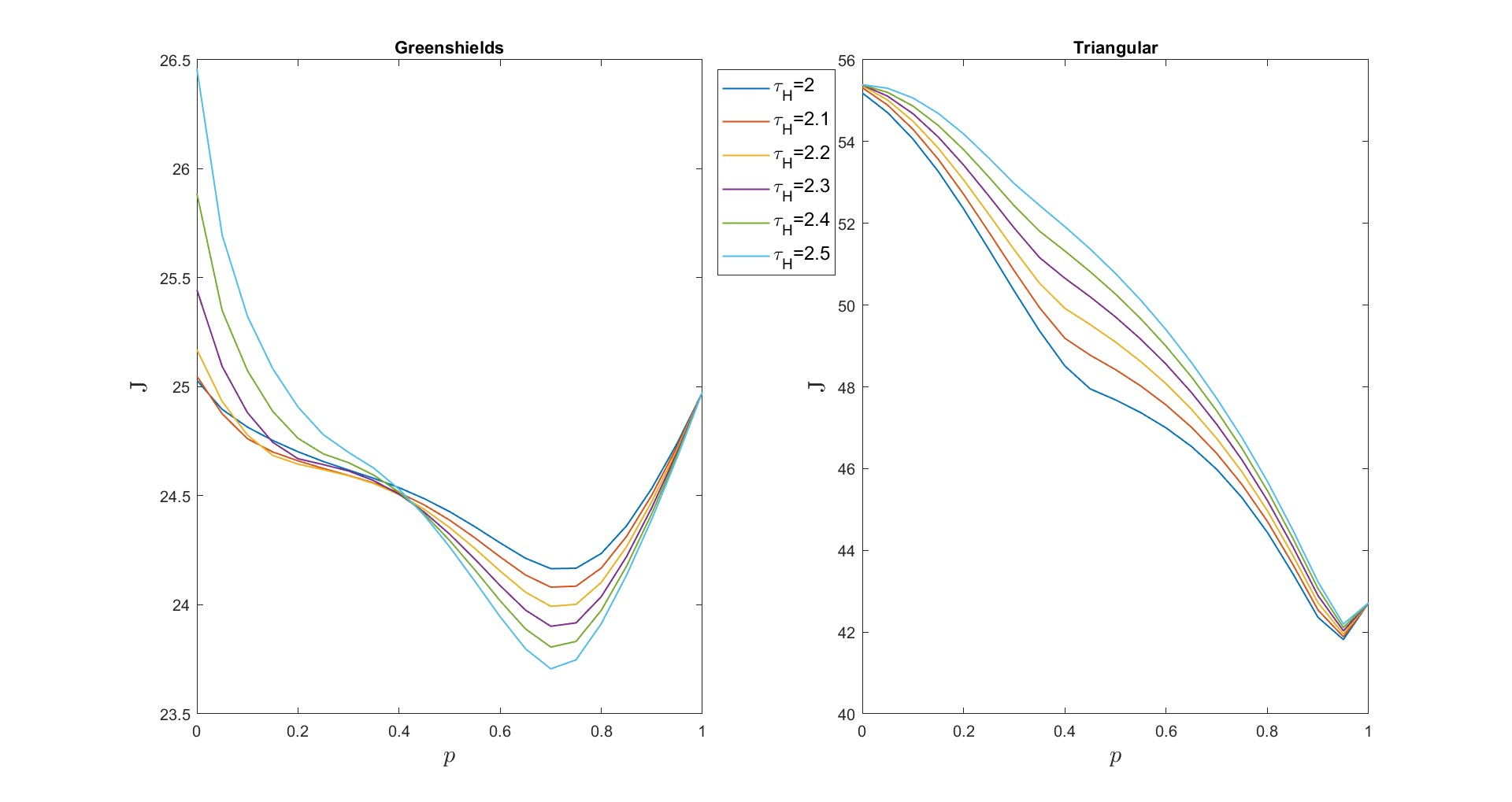}}
\caption{Functional $J$ defined in \eqref{Jfun} for $T=30$ associated to the initial datum \eqref{eq:tests}-\eqref{initial1}-\eqref{initial2} and to the delay $\tau_H\in\{2,2.1,2.2,2.3,2.4,2.5\}$. \textbf{Left:} Greenshields' speed law \eqref{Greenshields}. \textbf{Right:} Triangular speed law \eqref{trispeed}.}\label{functionals}
\end{figure}
Aiming to evaluate the impact of AVs on the overall traffic flow, we consider different \textit{penetration rates} $p\in[0,1]$ indicating the percentage of AVs in the total traffic. More precisely, given \eqref{eq:tests} as an initial condition for the total density $r^0(x):=\rho_H(0,x)+\rho_A(0,x)$, we set classes' initial data as
\begin{subequations} \label{eq:initial}
    \begin{align}
        \label{initial1}
\rho_H(0,x)&=(1-p)r^0(x), \\
\label{initial2}
\rho_A(0,x)&=p~r^0(x).
    \end{align}
\end{subequations}
% \begin{equation}\label{initial1}
% \rho_H(0,x)=(1-p)r^0(x)
% \end{equation}
% and
% \begin{equation}\label{initial2}
% \rho_A(0,x)=p~r^0(x).
% \end{equation}
 In particular, $p=1$ corresponds to a completely autonomous fleet, while $p=0$ represents the case where no AVs are present.
In Figures~\ref{confrontop} and~\ref{confrontopsep}, we compare the numerical solutions (respectively, the total density and the individual density components) of~\eqref{biclasse} with initial data~\eqref{eq:tests}-\eqref{initial1}-\eqref{initial2}, corresponding to different penetration rates $p=0, 0.2, 0.4, 0.6, 0.8, 1$.
We can see that in the fully human-driven situation, shown in the top left plot, the solution develops some oscillations, which reduce as $p$ increases. In particular, Figure~\ref{confrontopsep} shows that these instabilities are due only to the HV component.\\ %On the other hand, when the number of autonomous vehicles increases, oscillations are gradually dampened, in agreement with the above cited results. 
\begin{figure}
\centering
{\includegraphics[width=8cm]{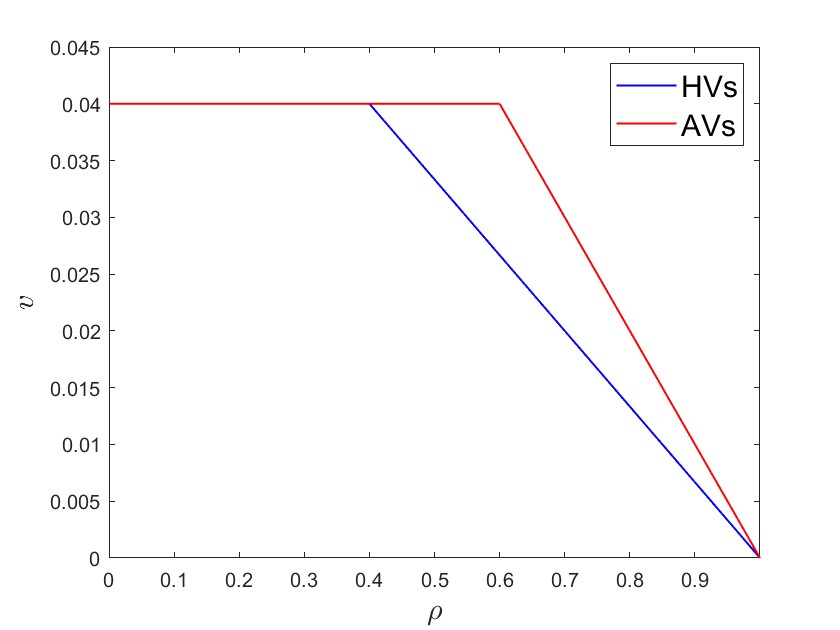}}
\caption{Speed-density relation described in \eqref{trispeed} with $V_H=V_A=0.04$, and $\rho_{c,H}=0.4$ and $\rho_{c,A}=0.6$.}\label{vel fig}
\end{figure}
The graphs in Figures~\ref{confrontop},~\ref{confrontopsep} show the density profiles at time $T=30$. To better investigate the stabilizing effect of the presence of AVs during the whole time interval $[0,T]$, we consider the functional
\begin{equation}\label{Jfun}
    J(p)=\int_0^T \d{} \modulo{\partial_x r} \d t,
\end{equation}
which measures the integral with respect to time of the spatial total variation of the total traffic density in the time interval $[0,T]$, see also~\cite{ChiarelloGoatin2019}. 
In Figure~\ref{functionals} (left), we plot the functional $J$ associated to the solutions of the test corresponding to  Figures~\ref{confrontop} and~\ref{confrontopsep}, for various delay values.
 As expected, in the limit case $p=1$ the profile is independent of $\tau_H$, while when $p=0$ the functional $J$ assumes values increasing with the delay. In addition, we observe that $J$ is in general not monotone with respect to the penetration rate $p$ and takes optimal (minimum) values close to $p=0.7$, consistently with the non-delayed model, see~\cite[Figure 4]{ChiarelloGoatin2019}. In particular, the functional is monotonically decreasing for $p$ not too close to $1$, showing the stabilizing effect of the AVs. Concerning the delay, we see that the bigger is $\tau_H$, the steeper is the profile, thus the stronger the stabilization capacity of AVs.

%\begin{figure}
%\centering
%{\includegraphics[width=10cm]{images/}}
%\caption{Functional $J$ defined in \eqref{Jfun} for $T=30$ associated to the initial datum \eqref{test} and to the delay $\tau_1\in\{2,2.1,2.2,2.3,2.4,2.5\}$.}\label{functionals}
%\end{figure}

%\subsection{Mixed traffic fundamental diagram}

In Figure~\ref{functionals} (right), we repeated the analysis choosing a different speed law than the Greenshields' velocity function \eqref{Greenshields}. 
Various speed functions are proposed in the literature. In particular,  two different regimes are usually identified in the fundamental diagram: the free flow  and congested regimes, see e.g.~\cite{LEVIN2016103,YE2018269,ZHOU2020102614}. When in a free-flow state, vehicles are supposed to travel at the maximal speed limit. On the other hand, once in congestion, the velocity of traffic decreases with the density until it reaches zero when the road is fully congested. 
Assuming a linear decrease in the congested branch of the fundamental diagram, in agreement with Greenshields' velocity function, we consider the  following speed law
\begin{equation}\label{trispeed}
v_i(r)=
\left\{
\begin{array}{cr}
V_i,&\mbox{ if }r\leq\rho_{c,i},\\
\frac{V_i}{\rho_{c,i}-R_i}(r-R_i),&\mbox{ otherwise,}\\
\end{array}
\right.
\qquad i=H,A.
\end{equation}
The critical densities $\rho_{c,i}\in[0,R_i)$, $i=H,A$, represent the transition points between free-flow and congestion. %Since AVs are programmed to follow driving rules with high precision and consistency, they can maintain optimal speeds, acceleration, and deceleration patterns, avoiding human errors such as sudden braking and inconsistent driving speeds, which contribute to traffic instabilities. Because of this, \paola{sicura? io avrei detto che il closer spacing deriva dal minore reaction time, non dal rispetto delle regole} 
Since AVs have shorter reaction times compared to human drivers, they can safely maintain closer spacing between vehicles and thus they are able to keep free-flow speed for higher densities \cite{YE2018269,ZHOU2020102614}. 
For these reasons, \cite{LEVIN2016103} proposes a flow-density relationship as a function of reaction time, in which the capacity for free flow speed increases as reaction time decreases.
Consistently, we require
$$
\rho_{c,H}<\rho_{c,A},
$$
indicating that the presence of AVs can increase the road capacity.
%\begin{figure}
%\centering
%{\includegraphics[width=16cm]{images/}}
%{\includegraphics[width=16cm]{images/}}
%\caption{Solution associated to the speed law \eqref{trispeed} with $\rho_{c,1}=0.5$ and $\rho_{c,2}=\{0.6,0.7,0.8\}$ and to the initial condition \eqref{initial 1}
%$r^0(x)=\rho^0\chi_{\left[\frac{4}{5},\frac{12}{5}\right]}(x)$ with $\rho_1(0,x)=r^0(x)\chi\left\{x>8/5\right\}$, $\rho_2(0,x)=r^0(x)\chi\left\{x\leq 8/5\right\}$ 
%in the left quadrants and \eqref{initial 2} in the right ones. Each quadrant contains the total %density on the top row and the associated density components on the bottom row. \\
%\textbf{Two top quadrants}:  $\rho^0=0.55$; \textbf{Two bottom quadrants}: $%\rho^0=0.75$.}\label{densitacritiche1}
%\end{figure}
Figure~\ref{vel fig} shows the speed functions~\eqref{trispeed} corresponding to $\rho_{c,H}=0.4$, $\rho_{c,A}=0.6$ and the same maximum speeds as before.
We recognize that these functions do not fulfill the smoothness hypothesis required by Assumption~\ref{multi hp}, which could be easily recovered by a smoothing process. However, this does not affect the numerical experiments.

Proceeding with the comparison between the Greenshields' and the triangular speed laws, in Figure~\ref{functionals} (right) we can see that the functional $J$ assumes much larger values when the triangular velocity function is used. 
%This can be attributed to the fact that, under the triangular speed law, vehicles travel on average at higher speeds compared to the classical speed model \eqref{Greenshields}, thus enhancing overall traffic instability.
Furthermore, the triangular speed law provides a more accurate representation of certain distinctive features of AVs behavior on the road, including their capability of stabilizing the overall traffic. Specifically, since human drivers enter a congested regime at lower densities compared to AVs, the stabilizing effect of AVs on traffic flow is stronger. This is clearly visible in Figure~\ref{functionals}, where the profile of the functional $J$ is steeper when the triangular velocity law~\eqref{trispeed} is used, indicating a sharper transition between the fully-human and the fully-autonomous regime, and thus highlighting the stronger capability of AVs to stabilize traffic. Since the right graph shows better results, we will adopt the triangular velocity law from now on.

\subsection{Oscillation dampening}

In the previous Section~\ref{AVHV}, we analyzed an AV-HV scenario assuming that the initial penetration rate of AVs is constant in space. In the following, we aim to extend this analysis by considering also a non-uniform distribution. Specifically, we investigate how a small perturbation in the initial conditions influences the system's temporal evolution. 
We consider model~\eqref{biclasse}, \eqref{trispeed} with critical densities $\rho_{c,H}=0.4$ and $\rho_{c,A}=0.6$,  and delay $\tau_H=2.0$ on a ring road (i.e. with periodic boundary conditions). \\
The baseline initial condition is given by the constant total density
$$
r^0(x)\equiv\rho^0=0.85,\qquad\mbox {for all }x\in [0,2],
$$
of which a fraction $p\in [0,1]$ is given by AVs, as in \eqref{eq:initial}. We apply to the initial data a perturbation $\theta(x)$ setting
\begin{equation}\label{perturbazione initial}
\rho_H(0,x)=\left[1-\left(p+\theta(x)\right)\right]\rho^0\qquad ,\qquad\rho_A(0,x)=\left(p+\theta(x)\right)\rho^0,
\end{equation}
as the class-specific initial conditions, 
where
\begin{equation}\label{theta}
\theta(x)=\frac{1}{30}\left[\cos\left(20\left(\frac{4}{3}x-\frac{1}{2}\right)\right)-\cos\left(10\left(\frac{4}{3}x-\frac{1}{2}\right)\right)\right]\chi_{\left[\frac{3}{20},\frac{(3\pi+1)}{20}\right]}(x)\,.
\end{equation}
Not that the initial total density is constant and, of course, if this perturbation wouldn't be present, the solution would remain constant. \\
%\begin{figure}[ht]
%\centering
%{\includegraphics[width=17cm]{images/}}
%\caption{Solution at the final time $T=1$ of the model with initial condition given by \eqref{perturbazione initial} and penetration rate respectively equal to $p=0.2,0.4,0.6,0.8$.}\label{perturbazioni}
%\end{figure}
%\paola{avrebbe avuto più senso mettere la perturbazione nel penetration rate, i.e. mettendo $p+\theta(x)$ al posto di $p$. Dire da qualche parte che senza questa perturbazione la soluzione sarebbe costante (senza oscillazioni)}\\
The solution at the final time $T=30$ is represented in Figure~\ref{perturbazioni}, where we compare the solutions corresponding to different  penetration rates. In addition, in Figure~\ref{TV_perturbazioni}, we plot the total variation of the total density $r=\rho_H+\rho_A$ with respect to time for each simulation. 
We recall that the initial data was chosen according to~\eqref{eq:initial_datum}. This choice implies that, to observe the effect of delay on the stability of the solution, it is necessary to wait long enough. %Indeed, initially, the system does not exhibit any delay and then it requires some time to get into a standard delayed regime.
As a further proof of the smoothing impact of AVs, focusing on the right part of the graph, that is for large times, we can see that when the number of AVs increases, the perturbation is absorbed faster and the total variation is smaller.
%\begin{figure}[ht]
%\centering
%{\includegraphics[width=8cm]{images/}}
%\caption{Total variation of the total density $r=\rho_1+\rho_2$ with respect to time associated to the tests in Figure \ref{perturbazioni}.}\label{TV_perturbazioni}
%\end{figure}
\begin{figure}[h!]
\centering
{\includegraphics[width=17cm]{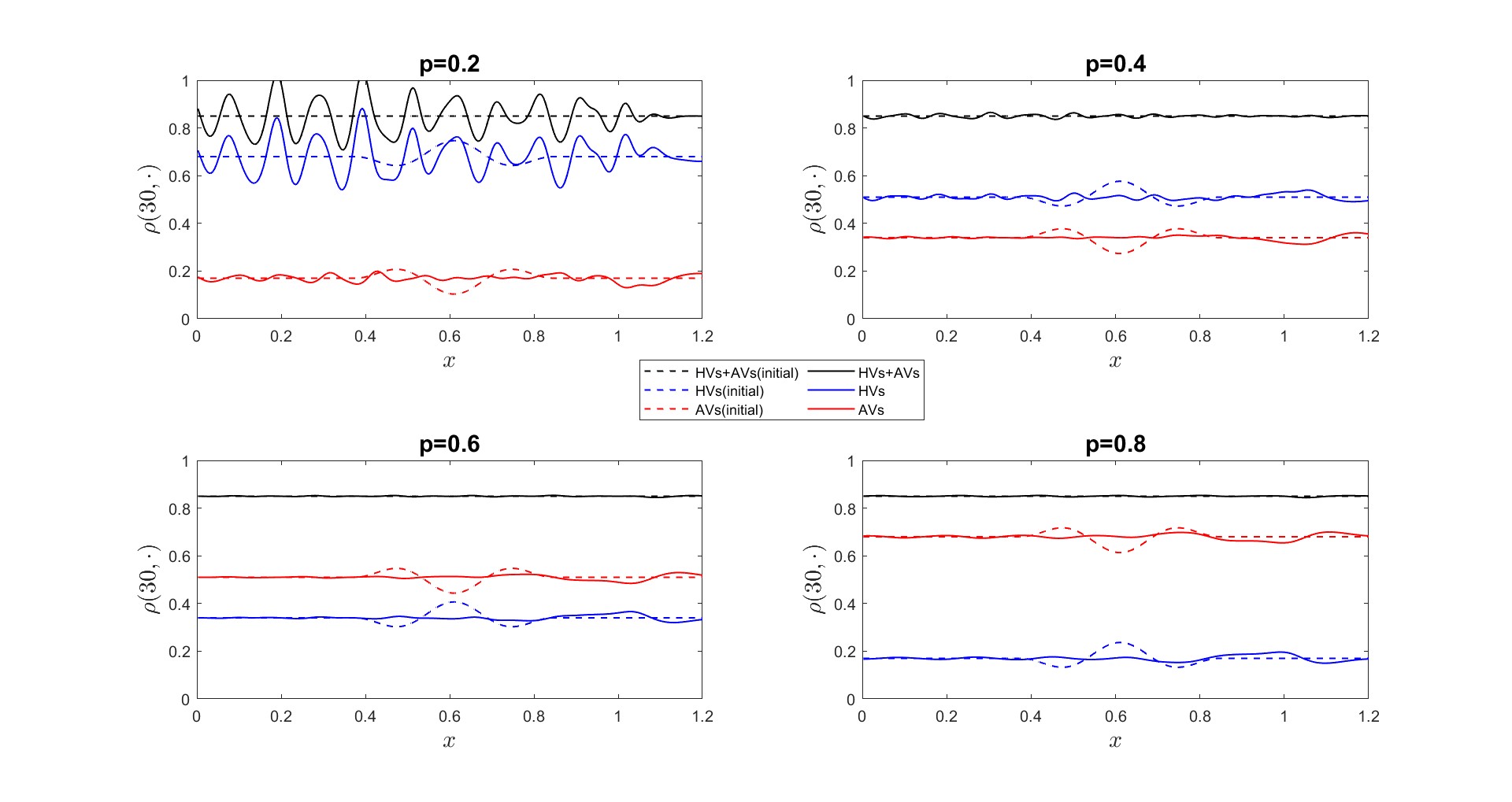}}
\caption{Solution at the final time $T=30$ of the model \eqref{biclasse} with initial condition given by \eqref{perturbazione initial} and penetration rate respectively equal to $p=0.2,0.4,0.6,0.8$.}\label{perturbazioni}
\end{figure}

\begin{figure}[h!]
\centering
{\includegraphics[width=9cm]{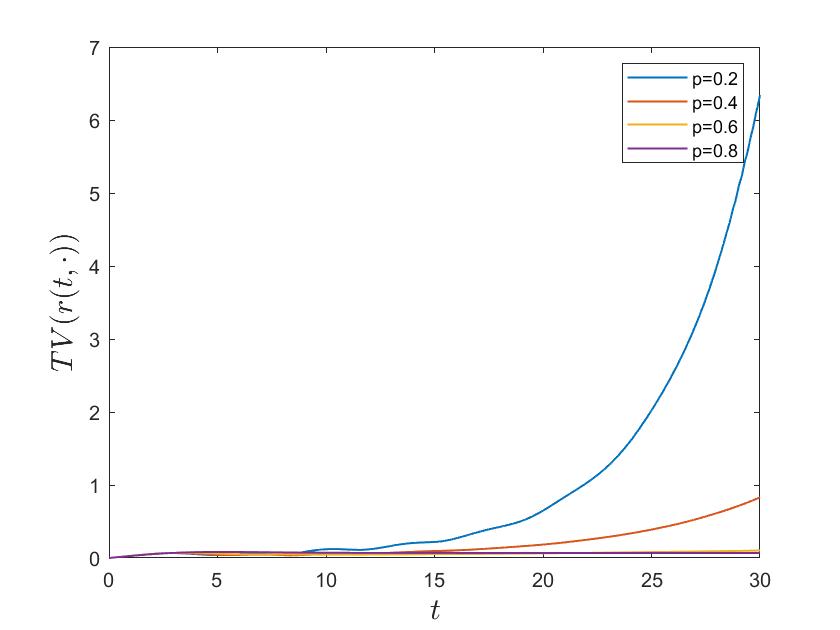}}
\caption{Total variation of the total density $r=\rho_H+\rho_A$ with respect to time associated to the tests in Figure \ref{perturbazioni}.}\label{TV_perturbazioni}
\end{figure}

\section{Conclusion}

In this paper, we introduced a non-local delayed  macroscopic model for multi-class traffic flow. We proved existence of solutions for $\BV$ initial data by showing that the Hilliges-Weidlich approximate solutions converge to an entropy weak solution of the model as the space discretization step tends to zero.  Additionally, we proved the $\L 1$ stability of solutions with respect to the initial data and the delay parameters by adapting Kru{\v{z}}kov's doubling of variables technique. This result implies the uniqueness of entropy weak solutions and their convergence to the associated non-delayed system.\\
This work extends the results obtained in the scalar case~\cite{CiaramagliaGoatinPuppo2024}, allowing for an investigation of heterogeneous traffic scenarios. It also improves the results obtained in~\cite{ChiarelloGoatin2019}, as the limit of our model, for all delays tending to zero, provides a non-local system of conservation laws for which all the properties, including global existence, are preserved. In contrast, for the model without saturation function considered in~\cite{ChiarelloGoatin2019}, existence of solutions is only guaranteed for small times, due to the blow-up of the $\L\infty$ bounds.

We then conducted a numerical analysis of the model, further investigating the effect of the saturation function in bounding each component of the solution under the relative maximum density. In addition, we investigated the impact of introducing AVs in a human-driven environment, showing that their presence can be beneficial in reducing traffic instabilities.

%\paola{quest'ultimo paragrafo lo leverei, o lo renderei meno esplicito, per non alimentare la concorrenza :-) puoi limitarti a dire che il modello necessita di essere validato con dati reali}
%This model provides a good basis for deriving a multi-scale model capable of incorporating both non-locality in space and time delay. Such a model would be useful for exploring the potential of further reducing the number of AVs in the environment, beyond the point where a fully macroscopic approximation remains applicable. 

\appendix

\section{$\BV$ estimates: detailed computations} \label{sec: app2}
We detail here the computations that lead to the estimate \eqref{eq:TVestimate} for the general case $M>2$ (the case $M=2$ was developed in the proof of Proposition \ref{multispaceBVteo}). 
We consider the sequence described in \eqref{sequencetemp} and we assume that at least one of the time delay parameters is strictly positive, which means that the set $\mathcal{J}$ defined in \eqref{J} is non empty.
Thus, the sequence translates into
$$
\tv^{n+1}\leq(1+\Delta t\tilde{\mathcal{G}})\tv^n+\Delta t\mathcal{H}\sum_{i\in\mathcal{J}}\tv^{n-h_i},
$$
where $\tilde{\mathcal{G}}=\mathcal{G}+(M-\modulo{\mathcal{J}})\mathcal{H}$.
If we set $h_M:=\min_{j\in\mathcal{J}}h_j$, then as in \eqref{boundtemp} we get the bound
$$
\tv^k\leq\left((\modulo{\mathcal{J}}+1)(1+\Delta t\mathcal{M})^{h_M}-\modulo{\mathcal{J}}\right)\tv^0=:\mathcal{B}^{\Delta t}_M\tv^0,\qquad k=0,\dots,h_M,
$$
with $\mathcal{M}=\max\{\mathcal{H},\tilde{\mathcal{G}}\}$ defined in \eqref{eq:M}.
Regarding the following terms of the sequence, let us assume without loss of generality that the second smallest delay is given by $h_{M-1}:=\min_{j\in\mathcal{J}\atop h_j>h_M}h_j$. Then, for each $k=1,\dots,\big\lfloor\frac{h_{M-1}}{h_M}\big\rfloor$ we can write
\begin{align*}
    \tv^{(k-1)h_M+1}\leq&(1+\Delta t\tilde{\mathcal{G}})\tv^{(k-1)h_M}+\Delta t\mathcal{H}\left(\sum_{i\in\mathcal{J}\atop i\ne M}\tv^0+\tv^{(k-1)h_M}\right)\\
    \leq&(1+\Delta t\tilde{\mathcal{G}})(\mathcal{B}^{\Delta t}_M)^{k-1}\tv^0+\Delta t\mathcal{H}\left(\modulo{\mathcal{J}}-1+(\mathcal{B}^{\Delta t}_M)^{k-1}\right)\tv^0,\\
    & ~~\vdots\\
    \tv^{kh_M}\leq&(1+\Delta t\tilde{\mathcal{G}})\tv^{kh_M-1}+\Delta t\mathcal{H}\left(\sum_{i\in\mathcal{J}\atop i\ne M}\tv^0+\tv^{(k-1)h_M-1}\right)\\
    \leq&(1+\Delta t\tilde{\mathcal{G}})^{h_M}(\mathcal{B}^{\Delta t}_M)^{k-1}\tv^0+\Delta t\mathcal{H}\left(\modulo{\mathcal{J}}-1+(\mathcal{B}^{\Delta t}_M)^{k-1}\right)\tv^0\sum_{l=0}^{h_M-1}(1+\Delta t\tilde{\mathcal{G}})^l\\
    \leq&(\mathcal{B}^{\Delta t}_M)^{k-1}\mathcal{B}^{\Delta t}_M\tv^0.
\end{align*}
This leads to
$$
\tv^k\leq\left((\modulo{\mathcal{J}}+1)(1+\Delta t\mathcal{M})^{h_M}-\modulo{\mathcal{J}}\right)^{\big\lfloor\frac{ h_{M-1}}{h_M}\big\rfloor-1}\mathcal{B}^{\Delta t}_M\tv^0,\qquad k=0,\dots,\Big\lfloor\frac{h_{M-1}}{h_M}\Big\rfloor h_M,
$$
and also
$$
\tv^k\leq\mathcal{B}^{\Delta t}_{M-1}\mathcal{B}^{\Delta t}_M\tv^0,\qquad k=0,\dots,h_{M-1},
$$
where
$$
\mathcal{B}^{\Delta t}_{M-1}=\left((\modulo{\mathcal{J}}+1)(1+\Delta t\mathcal{M})^{h_{M-1}-\big\lfloor\frac{h_{M-1}}{h_M}\big\rfloor h_M}-\modulo{\mathcal{J}}\right)\left((\modulo{\mathcal{J}}+1)(1+\Delta t\mathcal{M})^{h_M}-\modulo{\mathcal{J}}\right)^{\big\lfloor\frac{ h_{M-1}}{h_M}\big\rfloor-1}.
$$
Observing that $h_i=\tau_i/\Delta t$ for every $i=1,\dots,M$, we can iterate the same argument and pass to the limit as $\Delta t\rightarrow 0$. To avoid unnecessary multiplications, we introduce in the set $\mathcal{J}$ the equivalence relation $\sim$ defined as $i\sim j$ iff $\tau_i=\tau_j$,
and we denote the set of the equivalence classes
\begin{equation}\label{Jtilde}
\tilde{\mathcal{J}}:=\mathcal{J}/\sim.
\end{equation}
Thus, we obtain for $l\in\tilde{\mathcal{J}}$
$$
\sum_{i=1}^M\tv\left(\rho_i^{\Delta x}(k\Delta t,\cdot)\right)\leq\prod_{j\in\tilde{\mathcal{J}}\atop\tau_l\geq\tau_j}\mathcal{B}_j\sum_{i=1}^M\tv(\rho_i^0),\qquad k=0,\dots,h_l,
$$
with
\begin{equation}\label{Bj}
\mathcal{B}_j=
\begin{cases}
(\modulo{\mathcal{J}}+1)\operatorname{exp}\left\{\mathcal{M}\tau_{\min}\right\}-\modulo{\mathcal{J}},
& 
\mbox{ if }\tau_j=\tau_{\min},\\
\begin{aligned}[b]
&\left((\modulo{\mathcal{J}}+1)\operatorname{exp}\Big\{\mathcal{M}\Big(\tau_j-\Big\lfloor\frac{\tau_j}{\tau_{\max}^j}\Big\rfloor\tau_{\max}^j\Big)\Big\}-\modulo{\mathcal{J}}\right)\\
&\qquad\qquad\cdot\left((\modulo{\mathcal{J}}+1)\operatorname{exp}\Big\{\mathcal{M}\tau_{\max}^j\Big\}-\modulo{\mathcal{J}}\right)^{\Big\lfloor\frac{\tau_j}{\tau_{\max}^j}\Big\rfloor-1},
\end{aligned}
&
\mbox{ otherwise },
\end{cases}
\end{equation}
where $\tau_{\min}:=\min_{j\in\tilde{\mathcal{J}}}\tau_j$ and $\tau_{\max}^j:=\max_{l\in\tilde{\mathcal{J}}\atop\tau_j>\tau_l}\tau_l$.
Similarly, we conclude saying that in general \eqref{eq:TVestimate} holds with 
\begin{equation}\label{BT}
\mathcal{B}_T=
\begin{cases}
(\modulo{\mathcal{J}}+1)e^{\mathcal{M}T}-\modulo{\mathcal{J}},
& 
\mbox{ if }T<\tau_{\min},\\
\begin{aligned}[b]
&\Big((\modulo{\mathcal{J}}+1)\operatorname{exp}\Big\{\mathcal{M}\Big(T-\big\lfloor\frac{T}{\tau_{\max}^T}\big\rfloor\tau_{\max}^T\Big)\Big\}-\modulo{\mathcal{J}}\Big)\\
&\qquad\qquad\qquad\cdot\left((\modulo{\mathcal{J}}+1)\operatorname{exp}\Big\{\mathcal{M}\tau_{\max}^T\Big\}-\modulo{\mathcal{J}}\right)^{\big\lfloor\frac{T}{\tau_{\max}^T}\big\rfloor-1},
\end{aligned}
&
\mbox{ otherwise },
\end{cases}
\end{equation}
being $\tau_{\max}^T:=\max_{l\in\tilde{\mathcal{J}}\atop T\geq\tau_l}\tau_l$.
Observe that if $M=1$, omitting the index $i=1$ in the notation and denoting $\int_0^L\omega(s)\d s=:J_0$, then from \eqref{Bj} and \eqref{BT} the constant in the estimate \eqref{eq:TVestimate} translates into
$$
\mathcal{B}_T\prod_{j\in\mathcal{J}\atop T\geq\tau_j}\mathcal{B}_j=\left( 2e^{\mathcal{M}(T-\lfloor T/\tau \rfloor \tau)} -1 \right)
    \left( 2e^{\mathcal{M}\tau} -1 \right)^{\lfloor T/\tau \rfloor},
$$
and this is consistent with the results obtained for the scalar model \cite{CiaramagliaGoatinPuppo2024}.

\section{Detailed proof of the existence theorem}\label{sec:app3}
\begin{proofof}{Theorem~\ref{multiE1}}
By Lemma \ref{multiboundteo} we know that the the approximate solution $\boldsymbol\rho^{\Delta x}$ is uniformly bounded on $[0,T]\times\R $. Moreover, Proposition~\ref{multiBVteo} guarantees that the numerical solution has also uniformly bounded total variation. Thus, from the Helly's Theorem we get that there exists a subsequence of the numerical approximations $\boldsymbol\rho^{\Delta x}$ such that each component $\rho_i^{\Delta x}$ converges in the $\Lloc{1}$-norm to some $\rho_i\in\BV([0,T]\times\R ;[0,R_i])$ as $\Delta x \searrow 0$.
In the following, we apply the classical procedure of Lax-Wendroff theorem to prove that the limit function $\boldsymbol\rho=\left(\rho_1,\dots,\rho_M\right)$ is an entropy weak solution of~\eqref{multiclasse}-\eqref{eq:initial_datum} in the sense of Definition \ref{multientropy}. 
Let $\phi\in\Cc 1([0,T[\,\times\R ;\R^+)$ be a test function and $\phi^n_j=\phi(t_n,x_j)$. We assume that the grid for the approximation is such that $N_T\Delta t<T\leq(N_T+1)\Delta t$. By multiplying \eqref{multidisentropy} by $\Delta x\phi^n_j$ and summing by parts on $n=0,\dots N_T$ and $j\in\mathbb{Z}$, we get
\begin{align}
0\leq&\ \Delta x\sum_j\phi^0_j\modulo{\rho^0_{i,j}-\kappa}+\Delta x\Delta t\sum_{n=1}^{N_T-1}\sum_j\frac{\phi^n_j-\phi^{n-1}_j}{\Delta t}\modulo{\rho^n_{i,j}-\kappa}\label{multipart1}\\
+&\ \Delta x\Delta t\sum_{n=0}^{N_T-1}\sum_j\frac{\phi^n_{j+1}-\phi^n_j}{\Delta x}\left[F^\kappa_{i,j+\frac{1}{2}}(\rho^n_{i,j},\rho^n_{i,j+1})-\sgn(\rho^n_{i,j}-\kappa)\left(\rho^n_{i,j}f_i(\rho^n_{i,j})-\kappa f_i(\kappa)\right)V^{n-h_i}_{i,j}\right]\label{multipart2}\\
+&\ \Delta x\Delta t\sum_{n=0}^{N_T-1}\sum_j\frac{\phi^n_{j+1}-\phi^n_j}{\Delta x}\sgn(\rho^n_{i,j}-\kappa)\left(\rho^n_{i,j}f_i(\rho^n_{i,j})-\kappa f_i(\kappa)\right)V^{n-h_i}_{i,j}\label{multipart3}\\
-&\ \Delta x\Delta t\sum_{n=0}^{N_T-1}\sum_j\sgn(\rho^n_{i,j}-\kappa)\kappa f_i(\kappa)\frac{V^{n-h_i}_{i,j+1}-V^{n-h_i}_{i,j}}{\Delta x}\,\phi^n_j\label{multipart4}\\
-&\ \Delta t\kappa f_i(\kappa)\sum_{n=0}^{N_T-1}\sum_j\left[\sgn(\rho^{n+1}_{i,j}-\kappa)-\sgn(\rho^{n}_{i,j}-\kappa)\right]\left(V^{n-h_i}_{i,j+1}-V^{n-h_i}_{i,j}\right)\, \phi^n_j.\label{multipart5}
\end{align}
Clearly, we have
$$
\eqref{multipart1}\rightarrow\int_\R \modulo{\rho_i^0(x)-\kappa}\phi(0,x)\d x+\int_0^T\int_\R \modulo{\rho_i-\kappa} \del_t\phi \d x\d t,
$$
$$
\eqref{multipart3}\rightarrow\int_0^T\int_\R \sgn(\rho_i-\kappa)\left(\rho_i f_i(\rho_i)-\kappa f_i(\kappa)\right)v_i\left((r\ast\omega_i)(t-\tau_i,x)\right)\del_x\phi \d x\d t,
$$
and
$$
\eqref{multipart4}\rightarrow-\int_0^T\int_\R \sgn(\rho_i-\kappa)\kappa f_i(\kappa)\del_xv_i\left((r\ast\omega)(t-\tau_i,x)\right)\phi\d x\d t,
$$
as $\Delta x\rightarrow 0$. 
Next, we need to prove that both \eqref{multipart2} and \eqref{multipart5} converge to zero.
Let us first focus on~$\eqref{multipart2}$.
 We set $X>0$ such that $\phi(t,x)=0$ for $|x|>X$ and a couple of indexes $j_0,j_1\in\mathbb{Z}$ such that $\phi^n_j=0$ if $j$ is not in $[j_0,j_1]$. Thus,
 \begin{align*}
    |\eqref{multipart2}|&\leq \Delta x\Delta t\norma{\partial_x\phi}\sum_{n=0}^{N_T-1}\sum_{j=j_0}^{j_1}\modulo{F^\kappa_{i,j+\frac{1}{2}}(\rho^n_{i,j},\rho^n_{i,j+1})-\sgn(\rho^n_{i,j}-\kappa) \left(F_i(\rho^n_{i,j})-F_i(\kappa)\right) V_{i,j}^{n-h_i}}\\
    \leq&\ 2\Delta x\Delta t\norma{\partial_x\phi}\left(R_i+\modulo{\kappa}\right)V_i\norma{f_i'}\sum_{n=0}^{N_T-1}\sum_{j=j_0}^{j_1}\modulo{\rho^n_{i,j+1}-\rho^n_{i,j}}+\mathcal{O}(\Delta x)\\
    =&\ \mathcal{O}(\Delta x),
\end{align*}
which follows from the definition of $F^{\kappa}_{i,j+\frac{1}{2}}$, the mean value theorem, the fact that \eqref{multistimamodulo} ensures $V^{n-h_i}_{i,j+1}-V^{n-h_i}_{i,j}=\mathcal{O}(\Delta x)$ and from the bound
\begin{align*}
    \Delta t\sum_{n=0}^{N_T-1}\sum_{j=j_0}^{j_1}\modulo{\rho^n_{i,j+1}-\rho^n_{i,j}}\leq T\sup_{t\in [0,T]} \tv(\rho_i^{\Delta x}(t,\cdot))\leq TC(T,\boldsymbol L,\boldsymbol\tau)\sum_{l=1}^M\tv(\rho_l^0),
\end{align*}
for $C(T,\boldsymbol L,\boldsymbol\tau)$ defined as in \eqref{c}.
\\Finally, we focus on \eqref{multipart5}. Summing again by parts and using that that $V^n_{i,j+1}-V^n_{i,j}=\mathcal{O}(\Delta x)$ holds for all $n\geq-\max_lh_l$, then we get
\begin{align}
    \eqref{multipart5}=&\,\Delta t\kappa f_i(\kappa)\sum_{n=1}^{N_T-1}\sum_j\sgn(\rho^n_{i,j}-\kappa)\left[\left(V^{n-h_i}_{i,j+1}-V^{n-h_i}_{i,j}\right)-\left(V^{n-h_i-1}_{i,j+1}-V^{n-h_i-1}_{i,j}\right)\right]\phi^{n-1}_j\nonumber\\
    &+\mathcal{O}(\Delta x+\Delta t).\label{multistimapart5}
\end{align}
We can write
\begin{align}
    &\left(V^{n-h_i}_{i,j+1}-V^{n-h_i}_{i,j}\right)-\left(V^{n-h_i-1}_{i,j+1}-V^{n-h_i-1}_{i,j}\right)\nonumber\\
=\,&\Delta x~v_i''(\bar{\xi}_{i,j})\left[\xi^{n-h_i}_{i,j}-\xi^{n-h_i-1}_{i,j}\right]\sum_{k=0}^{+\infty}\omega_i^k\left(r^{n-h_i}_{j+k+1}-r^{n-h_i}_{j+k}\right)\label{stimavelocita1}\\
&+\Delta x~v_i'(\xi^{n-h_i-1}_{i,j}) \left[\sum_{k=1}^{N_i}(\omega_i^{k-1}-\omega_i^k)\left(r^{n-h_i}_{j+k}-r^{n-h_i-1}_{j+k}\right)-\omega_i^0\left(r^{n-h_i}_j-r^{n-h_i-1}_j\right)\right]\label{stimavelocita2},
\end{align}
being $\xi^{n-h_i}_{i,j}$ between $\Delta x\sum_{k=0}^{+\infty}\omega_i^kr^{n-h_i}_{j+k}$ and $\Delta x\sum_{k=0}^{+\infty}\omega_i^kr^{n-h_i}_{j+k+1}$, and $\xi^{n-h_i-1}_{i,j}$ between $\Delta x\sum_{k=0}^{+\infty}\omega_i^kr^{n-h_i-1}_{j+k}$ and $\Delta x\sum_{k=0}^{+\infty}\omega_i^kr^{n-h_i-1}_{j+k+1}$, and for $\bar{\xi}_{i,j}$ between $\xi^{n-h_i}_{i,j}$ and $\xi^{n-h_i-1}_{i,j}$.
Regarding \eqref{stimavelocita1}, we remark that 
\begin{align}
    \sum_{k=0}^{+\infty}\omega_i^k\modulo{r^{n-h_i}_{j+k+1}-r^{n-h_i}_{j+k}}&\leq\sum_{l=1}^M\sum_{k=0}^{+\infty}\omega^k_i\modulo{\rho^{n-h_i}_{l,j+k+1}-\rho^{n-h_i}_{l,j+k}}\nonumber\\
&\leq\norma{\omega_i}\sup_{t\in[0,T]}\sum_{l=1}^M\tv(\rho_l^{\Delta x}(t,\cdot))\leq\norma{\omega_i}C(T,\boldsymbol L,\boldsymbol\tau)\sum_{l=1}^M\tv(\rho_l^0),\label{stimavelocitaaux}
\end{align} 
and that $\xi^{n-h_i}_{i,j}-\xi^{n-h_i-1}_{i,j}=\mathcal{O}(\Delta x+\Delta t)$.
Indeed, for some $\theta,\mu\in[0,1]$, we compute
\begin{align*}
\xi^{n-h_i}_{i,j}-\xi^{n-h_i-1}_{i,j}=&\Delta x\sum_{k=0}^{+\infty}\left[\mu\omega_i^kr^{n-h_i}_{j+k+1}+(1-\mu)\omega_i^kr^{n-h_i}_{j+k}-\theta\omega_i^kr^{n-h_i-1}_{j+k+1}-(1-\theta)\omega_i^kr^{n-h_i-1}_{j+k}\right]\\
=&\Delta x\sum_{k=0}^{+\infty}\left[\theta\omega_i^k\left(r^{n-h_i}_{j+k+1}-r^{n-h_i-1}_{j+k+1}\right)+(1-\theta)\omega_i^k\left(r^{n-h_i}_{j+k}-r^{n-h_i-1}_{j+k}\right)\right]\\
&+\Delta x\sum_{k=0}^{+\infty}\left[(\mu-\theta)\omega_i^kr^{n-h_i}_{j+k+1}+[(1-\mu)-(1-\theta)]\omega_i^kr^{n-h_i}_{j+k}\right]\\
=&\Delta x\sum_{k=1}^{N_i}\left(\theta\omega_i^{k-1}+(1-\theta)\omega_i^k\right)\left(r^{n-h_i}_{j+k}-r^{n-h_i-1}_{j+k}\right)\\
&+\Delta x(\mu-\theta)\left[\sum_{k=1}^{+\infty}(\omega_i^{k-1}-\omega_i^k)r^{n-h_i}_{j+k}-\omega_i^0r^{n-h_i}_j\right]\\
&+\Delta x(1-\theta)\omega_i^0\left(r^{n-h_i}_j-r^{n-h_i-1}_j\right).
\end{align*}
Since \eqref{multidelta rho} implies that for every $j\in\mathbb{Z}$
\begin{align}
\modulo{r^{n-h_i}_j-r^{n-h_i-1}_j}\leq& \sum_{l=1}^M\modulo{\rho^{n-h_i}_{l,j}-\rho^{n-h_i-1}_{l,j}}\\
\leq&\lambda\sum_{l=1}^MV_l\left(\modulo{\rho^{n-h_i-1}_{l,j}-\rho^{n-h_i-1}_{l,j-1}}+R_l\norma{f_l'}\modulo{\rho^{n-h_i-1}_{l,j+1}-\rho^{n-h_i-1}_{l,j}}\right)\nonumber\\
&+2\Delta tR\sum_{l=1}^MR_l\norma{v_l'}\omega_l^0,\label{utile}
\end{align}
then, similarly to \eqref{multidelta xi}, we get
\begin{align*}
\big|\xi^{n-h_i}_{i,j}-&\xi^{n-h_i-1}_{i,j}\big|
\leq 2\omega_i^0\Delta x \sum_{k=1}^{N_i}\modulo{r^{n-h_i}_{j+k}-r^{n-h_i-1}_{j+k}}+\Delta x\left(\sum_{k=1}^{N_i}(\omega_i^{k-1}-\omega_i^k)+2\omega_i^0\right)R \\
\leq&2\norma{\omega_i}\lambda\Delta x\sum_{l=1}^MV_l\sum_{k=1}^{N_i}\left(\modulo{\rho^{n-h_i-1}_{l,j+k}-\rho^{n-h_i-1}_{l,j+k-1}}+ R_l\norma{f_l'}\modulo{\rho^{n-h_i-1}_{l,j+k+1}-\rho^{n-h_i-1}_{l,j+k}}\right)\\
&+4\Delta t\norma{\omega_i}R\sum_{k=1}^{N_i}\Delta x\sum_{l=1}^MR_l\norma{v_l'}~\norma{\omega_l} +3\Delta x\norma{\omega_i}R\\
\leq&2\norma{\omega_i}\Delta t\sum_{l=1}^{M}V_l\left(1+R_l\norma{f_l'}\right)\sum_{k\in\Z}\modulo{\rho^{n-h_i-1}_{l,k+1}-\rho^{n-h_i-1}_{l,k}}\\
&+4\Delta t\norma{\omega_i}RL_i\sum_{l=1}^MR_l\norma{v_l'}~\norma{\omega_l}+3\Delta x\norma{\omega_i}R\\
\leq& C_1\Delta x+C_2\Delta t,
\end{align*}
with 
\begin{align*}
C_1&=3\norma{\omega_i}R,\\
C_2&=2\norma{\omega_i}C(T,\boldsymbol L,\boldsymbol\tau)\sum_{l=1}^MV_l\left(1+R_l\norma{f'_l}\right)\sum_{k=1}^M\tv(\rho^0_k)+4\norma{\omega_i}RL_i\sum_{l=1}^MR_l\norma{v_l'}~\norma{\omega_l}.
\end{align*}
Regarding \eqref{stimavelocita2}, for every $j\in\mathbb{Z}$
\begin{align*}
r^{n-h_i}_j-r^{n-h_i-1}_j=&\sum_{l=1}^M\left(\rho^{n-h_i}_{l,j}-\rho^{n-h_i-1}_{l,j}\right)\\
=& \sum_{l=1}^M\left[\lambda\mathcal{R}^n_{l,i,j}\left(\rho^{n-h_i-1}_{l,j+1}-\rho^{n-h_i-1}_{l,j}\right)-\lambda\mathcal{L}^n_{l,i,j}\left(\rho^{n-h_i-1}_{l,j}-\rho^{n-h_i-1}_{l,j-1}\right)\right]+\mathcal{O}(\Delta t),
\end{align*}
 being $\mathcal{R}^n_{l,i,j}=-\rho^{n-h_i-1}_{l,j-1}f'_l(\tilde{\rho}^{n-h_i-1}_{l,j+\frac{1}{2}})V^{n-h_i-1-h_l}_{l,j+1}$ and $\mathcal{L}^n_{l,i,j}=f_l(\rho^{n-h_i-1}_{l,j+1})V^{n-h_i-1-h_l}_{l,j+1}$. Thus, since  $|\mathcal{R}^n_{l,i,j}|\leq R_l\norma{f_l'}V_l$ and $|\mathcal{L}^n_{l,i,j}|\leq V_l$ , and since for every $l=1,\dots,M$ it holds
\begin{align*}
    \lambda\Delta x\Delta t\sum_{k=1}^{N_i}&(\omega_i^{k-1}-\omega_i^k)\sum_{n=1}^{N_T-1}\sum_j\modulo{\rho^{n-h_i-1}_{l,j+k+1}-\rho^{n-h_i-1}_{l,j+k}}\phi^{n-1}_j\\
    &\leq2\lambda\Delta x\Delta t\omega_i^0\norma{\phi}\sum_{n=1}^{N_T-1}\sum_{j=j_0}^{j_1+N_i}\modulo{\rho^{n-h_i-1}_{l,j+1}-\rho^{n-h_i-1}_{l,j}}\\
    &\leq2\lambda\norma{\omega_i}\norma{\phi}\int_0^T\int_{-X}^{X+L_i}\modulo{\rho_l^{\Delta x}\left(t-(h_i+1)\Delta t,x+\Delta x\right)-\rho_l^{\Delta x}\left(t-(h_i+1)\Delta t,x\right)}\d x\d t\\
&\leq2\norma{\omega_i}\norma{\phi}\mathcal{C}\sum_{k=1}^M\tv(\rho_k^0)\Delta t,
\end{align*}
where the positive constant $\mathcal{C}$ is given by Proposition \ref{multiBVteo}, then from \eqref{multistimapart5} we get
\begin{equation*}
\eqref{multipart5}=\mathcal{O}(\Delta x+\Delta t),
\end{equation*}
and this clearly proves that \eqref{multipart5} converges to zero as $\Delta x\rightarrow 0$ ( and $\Delta t\rightarrow 0$).
\end{proofof}

\section*{Acknowledgments}

This work was funded by the European Union’s Horizon Europe research and innovation programme under the Marie Skłodowska-Curie Doctoral Network Datahyking (Grant No. 101072546).
G. Puppo was also supported by the European Union-NextGenerationEU (National Sustainable Mobility Center CN00000023, Italian Ministry of University and Research Decree n. 1033- 17/06/2022, Spoke 9).

{ \small
	\bibliography{nonlocal}
	\bibliographystyle{abbrv}
} 

\end{document}